\documentclass[11pt]{amsart}
\usepackage[english]{babel}

\numberwithin{equation}{section}

\date{\today}

\keywords{Fatou component, polynomial skew product, non-wandering domain theorem, Lyapunov exponent}
\author{Zhuchao Ji and Weixiao Shen}

\title{The wandering domain problem for attracting polynomial skew products}

\address{Institute for Theoretical Sciences, Westlake University, Hangzhou,  China 310030}

\email{jizhuchao@westlake.edu.cn}

\address{Shanghai Center for Mathematical Sciences, Fudan University, Shanghai, China 200438}

\email{wxshen@fudan.edu.cn}

\subjclass[2020]{37F10, 37F50, 32H50}

\newtheorem{theorem}{Theorem}[section]
\newtheorem{definition}[theorem]{Definition}
\newtheorem{proposition}[theorem]{Proposition}

\newtheorem{lemma}[theorem]{Lemma}
\newtheorem{remark}[theorem]{Remark}

\newtheorem{problem}[theorem]{Problem}

\newtheorem*{theorem*}{Theorem}
\newtheorem*{definition*}{Definition}
\newtheorem*{lemma*}{Lemma}
\newtheorem*{proposition*}{Proposition}
\newtheorem*{question*}{Question}

\begin{document}

\setcounter{tocdepth}{1}
\maketitle
\pagestyle{plain}
	
	\begin{abstract}
Wandering Fatou components were recently constructed by Astorg et al \cite{astorg2016two} for higher dimensional holomorphic maps on projective spaces. Their examples are polynomial skew products with a parabolic invariant line. In this paper we study this wandering domain problem  for polynomial skew product $f$ with an attracting invariant line $L$ (which is the more common case). We show that if $f$ is unicritical (in the sense that the critical curve has a unique transversal intersection with $L$), then every Fatou component of $f$ {\bf in the basin of $L$}
is an extension of a one-dimensional Fatou component of $f|_L$. As a corollary there is no wandering Fatou component.  We will also discuss the  multicritical case under additional assumptions.
	\end{abstract}
	\tableofcontents
\bigskip	
\section{Introduction}
\subsection{Backgrounds}
In one-dimensional complex dynamics, a celebrated theorem of Sullivan \cite{sullivan1985quasiconformal} asserts that there is no wandering Fatou component for rational maps on the Riemann sphere $\mathbb{P}^1$. This leads to a complete classification of Fatou components for rational maps: a Fatou component is preperiodic to an attracting basin, a parabolic basin, or a rotation domain.
\medskip
\par In higher dimension, let $f$ be a {\em holomorphic endomorphisms} on the projective space $\mathbb{P}^k$, $k\geq 2$. The Fatou set is classically defined as the maximal locus such that $\left\{f^n\right\}_{n\geq 1}$ form a normal family.  However, contrary to the one-dimensional case, wandering Fatou components were recently constructed by Astorg-Buff-Dujardin-Peters-Raissy \cite{astorg2016two} for  holomorphic endomorphisms on the projective plane $\mathbb{P}^2$.
\medskip
\par The counter examples constructed in \cite{astorg2016two} are  {\em polynomial skew products}. By definition a polynomial skew product is a self map on $\mathbb{C}^2$ of the following form:
\begin{equation*}
f(z,w)=(p(z),q(z,w)),
\end{equation*}
where $p,q$ are polynomials of degree at least 2. A polynomial skew product $f$ is called {\em regular} if it can be extended to a holomorphic endomorphism on $\mathbb{P}^2$, which is equivalent to the condition that $\text{deg}(p)=\text{deg}(q)=d$,  and $q(z,w)= c w^d+\;\text{lower degree terms}$, for some $c\neq 0$.  For a regular polynomial skew product, the line at infinity is an attracting set, and the Fatou components in the attracting basin of the line at infinity was classified by Lilov \cite{lilov2004fatou}. (In this case the line at infinity is {\em super-attracting} by the terminology we defined below.)
\medskip
\par Now assume that $\Omega$ is a  Fatou component of $f$ with bounded orbit, let $\pi$  be the projection map to the $z-$coordinate, then $\pi(\Omega)$ is contained in the Fatou set of $p$. Recall that the polynomial map $p$ does not have Herman rings. Since the classification of Fatou components of $p$ is known (by Sullivan's theorem), to investigate the dynamics of $f$ on $\Omega$, by passing to an iteration of $f$ and a coordinate change, we may assume that the line $L=\left\{z=0\right\}$ is invariant (i.e. $p(0)=0$), and $\Omega$ is contained in a small neighborhood of $L$. The invariant line $L$ is called {\em super-attracting, attracting, parabolic} or {\em elliptic} if $0$ is a super-attracting, attracting, parabolic or elliptic fixed point of $p$, respectively. The counter examples constructed in \cite{astorg2016two} are regular polynomial skew products with a parabolic invariant line, and the {\em parabolic implosion technique} is crucial in their construction.
\medskip
\par  It remains an open  problem whether there are wandering Fatou components for regular polynomial skew products with an attracting invariant line. Note that a generic (i.e. open and dense in the parameter space of polynomials of fixed degree) polynomial map on $\mathbb{P}^1$ does {\bf not} have parabolic or elliptic periodic points.  Since the construction of wandering Fatou components in the parabolic case relies heavily on the parabolic implosion technique, one might expect that  in the attracting case there is actually no wandering Fatou component. This would imply that a {\bf generic} regular polynomial skew product does not have wandering Fatou components, which is  satisfactory for the purpose of understanding  dynamics of generic regular polynomial skew products.

\medskip
\par In this paper we  solve this problem when $f$ is {\em unicritical}, i.e. the critical curve of $f$ has a unique {\em transversal intersection} with $L$. Here the critical curve of $f$ is by definition the critical locus of the map $f:\mathbb{C}^2\to \mathbb{C}^2$. Equivalently this means that $f$ can be conjugated to the following form in a neighborhood of $L$:
\begin{equation}\label{uni}
f(z,w)=(\lambda z, w^d+c(z)),
\end{equation}
where $0<|\lambda|<1$, $d\geq 2$ and $c(z)$ is a (non-constant) holomorphic function in a neighborhood $0$. Let $k\geq 1$ be the order of $c$ at $0$, i.e. $c(z)$ has the local expression $c(z)=c(0)+a_k z^k+\text{higher order term}$, with $a_k\neq 0$. By passing to a coordinate change $z\to \alpha z$ we can further assume that
\begin{equation}\label{1.2}
c(z)=c(0)+ z^k+\text{higher order term},
\end{equation} and we choose $0<r_0<1$ such that  for every $z\in B(0,r_0)$,
\begin{equation}\label{eqn:cz-c0}
 |z^k|\geq |c(z)-c(0)-z^k|.
\end{equation}
\par In the rest of the paper we always assume that $f$ has this form unless otherwise stated. Let $f_0$ be the one-dimensional map $f_0(w)=w^d+c(0),$ which is the restriction of $f$ on the invariant line $\left\{z=0\right\}$.
\medskip
\subsection{Main results}
\par The following is our main theorem.
\begin{theorem}\label{main}
Let $f(z,w)=(\lambda z, w^d+c(z))$ as in (\ref{uni}). Then every Fatou component of $f$ is an extension of a Fatou component of $f_0$. In particular there is no wandering Fatou component.
\end{theorem}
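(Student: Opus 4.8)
The plan is to reduce the theorem to two assertions about $f$ and then combine them with the classical \emph{bulging} of one-dimensional Fatou components. Two structural facts are used throughout: $f^{-1}(L)=L$ (immediate from $\pi\circ f=\lambda\pi$), and every Fatou component $D$ of $f_0$ extends to a Fatou component $\widehat D$ of $f$ with $\widehat D\cap L=\{0\}\times D$. The latter is the standard extension of attracting, semi-parabolic and semi-Siegel local dynamics at the periodic points of $f$ lying over the corresponding $f_0$-cycles (with eigenvalues $\lambda$ and the multiplier of the cycle; a Siegel rotation number of $f_0$ is automatically Brjuno), and it also yields $\mathcal F(f)\cap L=\mathcal F(f_0)$ and hence $J(f)\cap L=J(f_0)$. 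Since $D\mapsto\widehat D$ is injective and a Fatou component of $f$ meeting $L$ must equal some $\widehat D$, the theorem follows once we establish: (A) $f$ has no wandering Fatou component, and (B) every periodic Fatou component of $f$ meets $L$. Indeed, a non-wandering component of $f$ is eventually periodic, its orbit lands on a periodic component which meets $L$ by (B), and pulling a point of $L$ back along the finite orbit via $f^{-1}(L)=L$ shows the component itself meets $L$. Assertion (B) is a normal-form / limit-function analysis of periodic components near the attracting line: the limit maps of $\{f^{np}|_\Omega\}$ have image in $L$, the constant limits or invariant rotation disks correspond to a fixed point, resp.\ a periodic Fatou component, of $f_0^p$ attached to $L$, and (using $f^{-1}(L)=L$ once more) one upgrades $\overline\Omega\cap L\ne\varnothing$ to $\Omega\cap L\ne\varnothing$.

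The heart of the matter is (A). Suppose $\Omega$ is a wandering Fatou component; we may assume $\Omega\subset B(0,r_0)\times\mathbb C$ and that the $w$-orbit of every point of $\Omega$ stays bounded (otherwise $\Omega$ lies in the region where $w$-coordinates escape to infinity, which extends the basin of infinity of $f_0$ and is not wandering). Write $f^n(z,w)=(\lambda^n z,\,G^n_z(w))$, where the fibre map $G^n_z=g_{\lambda^{n-1}z}\circ\cdots\circ g_z$, with $g_\zeta(w)=w^d+c(\zeta)$, is a composition of maps that converge to $f_0$ geometrically fast as $\zeta\to0$. The $\omega$-limit set of any orbit in $\Omega$ has the form $\{0\}\times\widetilde\omega$ with $\widetilde\omega\subset\mathbb C$ compact and forward $f_0$-invariant; since the $\omega$-limit set of a point of a wandering component lies in $J(f)$ and $J(f)\cap L=J(f_0)$, we get $\widetilde\omega\subset J(f_0)$. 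As $J(f_0)$ is nowhere dense, every limit map of $\{f^n|_\Omega\}$ has image in $L$ and is therefore constant; restricting to a vertical slice $\Omega^*=\Omega\cap\{z=z^*\}$ (with $z^*\in\pi(\Omega)$ fixed and small), this says that $\{G^n_{z^*}|_{\Omega^*}\}$ is a normal family with only constant limits, so $\operatorname{diam}\bigl(G^n_{z^*}(K)\bigr)\to0$ for every compact $K\Subset\Omega^*$.

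The decisive step is a dichotomy governed by the Lyapunov exponents $\chi(\nu)=\int\log|f_0'|\,d\nu$ of $f_0$-invariant measures $\nu$ carried by $\widetilde\omega$: the second-coordinate derivative of $f^n$ along an orbit is $\prod_{j<n}d\,w_j^{d-1}$, whose logarithmic growth rate is controlled by such $\chi(\nu)$, and by Przytycki's theorem $\chi(\nu)\ge0$. If $\widetilde\omega$ avoids the critical point $0$ of $f_0$ and every ergodic $\nu$ carried by it has $\chi(\nu)>0$ --- equivalently, $\widetilde\omega$ is a uniformly expanding set --- then the orbit of a small disk $K\Subset\Omega^*$ shadows orbits in $\widetilde\omega$, so $G^n_{z^*}$ is univalent with bounded distortion on $K$ and $|(G^n_{z^*})'|\to\infty$ exponentially there, whence $\operatorname{diam}\bigl(G^n_{z^*}(K)\bigr)\to\infty$, contradicting the previous paragraph.

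\textbf{The main obstacle is the complementary case}, where $\widetilde\omega$ contains the critical point $0$ or carries an ergodic measure of zero Lyapunov exponent: this is exactly the regime of parabolic, Cremer, or infinitely renormalizable accumulation of $f_0$, which in dimension one is handled by quasiconformal surgery in Sullivan's theorem. Here unicriticality is essential: the post-critical set of $f$ is the closure of the forward orbit of the single critical curve $\{w=0\}$, and since $\lambda^n z\to0$ this curve flattens onto $\{0\}\times\mathcal P_0$, where $\mathcal P_0$ is the post-critical set of $f_0$; using that $f$ expands transversally to $L$ away from the post-critical set, one shows that $\widetilde\omega$ must cluster on $\mathcal P_0$. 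One then compares the non-autonomous maps $G^n_{z^*}$ with the autonomous $f_0^n$ in a neighbourhood of $\mathcal P_0$: the slices $\Omega^*$ and their $G^n_{z^*}$-images behave like an ``almost wandering'' family for $f_0$ near $\mathcal P_0$, and combining the geometric decay of $g_{\lambda^j z^*}-f_0$ with Sullivan's non-wandering theorem for $f_0$ --- together with a diagonal / normal-families extraction --- produces the contradiction. Carrying out this comparison with no hyperbolicity available is, I expect, the crux of the whole argument.
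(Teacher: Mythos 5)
Your reduction to (A) ``no wandering components'' plus (B) ``periodic components meet $L$'' is reasonable, and your treatment of the expanding case of (A) is fine as far as it goes. But the proof has a genuine gap exactly where you flag it: the case where the $\omega$-limit set $\widetilde\omega$ meets the critical point or carries a measure of zero Lyapunov exponent is not an obstacle to be waved at --- it \emph{is} the theorem, and your proposed resolution (``combining the geometric decay of $g_{\lambda^j z^*}-f_0$ with Sullivan's non-wandering theorem for $f_0$, together with a diagonal/normal-families extraction'') is not an argument. Sullivan's theorem classifies Fatou components of the single map $f_0$; the sets $G^n_{z^*}(\Omega^*)$ are not Fatou components of $f_0$ and are not even $f_0$-orbits of a fixed set, so there is no wandering domain of $f_0$ to which Sullivan applies, and no known mechanism transfers quasiconformal surgery to such a non-autonomous perturbed family. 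The intermediate claim that $\widetilde\omega$ must cluster on the post-critical set of $f_0$ is also asserted without proof. In short, everything after ``The main obstacle is the complementary case'' is a restatement of the problem rather than a solution.

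For comparison, the paper's proof does not run through an expanding/non-expanding dichotomy on $\widetilde\omega$ at all. It establishes three quantitative inputs valid with no hyperbolicity hypothesis: (i) Theorem~\ref{thm:derivatives}, a lower bound $|Df^n(x_0)(v)|\ge C\lambda_0^n\min_i|w_i|^{d-1}$ along ``tame'' orbits, proved by a binding argument that transfers the one-dimensional estimates of Levin--Przytycki--Shen (in particular the nonnegativity of the Lyapunov exponent of the critical value, which is where unicriticality is truly used) to the perturbed fibre maps; (ii) Theorem~\ref{slowapp}, that Lebesgue-a.e.\ point is $\alpha$-slow approaching, proved by excluding bad vertical lines (a parameter-exclusion argument in the base variable $z$); and (iii) Proposition~\ref{radius}, showing that a vertical disk $D$ of radius $\delta$ centered at $(z_0,w_0)$ with $|z_0|<\delta^{2d}$ satisfies $B(f_0^n(w_0),C\lambda_0^n\delta)\subset\xi_n(D)$ for all $n$. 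Combining (i) and (ii) one manufactures inside the hypothetical wandering component a vertical disk close enough to $L$ to apply (iii) with a center $w_0$ in the Fatou set of $f_0$; since the $f_0$-orbit of $w_0$ must enter a bulged one-dimensional Fatou component, so must $f^n(D)$, contradicting wandering. If you want to complete your proof, you would need to supply (i)--(iii) or an equivalent substitute; the soft normal-families framework you set up does not by itself produce them.
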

Note that the escaping Fatou component $$\Omega_\infty:=\left\{x\in B(0,r_0)\times\mathbb{C}:|f^n(x)|\to +\infty\right\}$$ is clearly non-wandering. Any other Fatou component of $f$ is contained in the complement of $\Omega_\infty$ and hence is uniformly bounded. 
\par We will discuss a partial generalization of Theorem \ref{main} in Section 6 without assuming the unicritical  condition. 
\medskip

Applying Theorem \ref{main} to the setting when $f$ is a globally defined holomorphic endomorphism on $\mathbb{P}^2$, we get the following.
\begin{theorem}\label{main2}
Let $f(z,w)=(p(z), q(z,w))$ be a regular polynomial skew product of degree at least 2. Assume that $p$ does not have parabolic periodic points nor Siegel periodic points, and that for every attracting $p$-periodic point $z_0$ of periods $s$, the critical curve of $f^s$ in $\mathbb{C}^2$ has a unique  transversal intersection with the vertical  line $L:=\left\{z=z_0\right\}\subset \mathbb{C}^2$, then $f$ has no wandering Fatou component in $\mathbb{P}^2$.
\end{theorem}
\medskip

Here is a specific example that Theorem \ref{main2} can apply.
\begin{theorem}\label{main3}
	Let $f(z,w)=(z^2+\lambda z, w^2+az^2+bz+c)$, where $\lambda,a,b,c\in \mathbb{C}$. Then $f$ can be extended holomorphically to $\mathbb{P}^2$. Assume that  $|\lambda|<1$. Then $f$ has no wandering Fatou component in $\mathbb{P}^2$.
\end{theorem}
\medskip

It is not hard to show that every quadratic regular polynomial skew product  can be conjugated to the form $f(z,w)=(z^2+\lambda z, w^2+az^2+bz+c)$, see \cite[Lemma 2.9]{astorg2023hyperbolicity}.

 In the setting of Theorem \ref{main3}, let $f_0(w):=w^2+c$, which is the one-dimensional map acting on the invariant attracting vertical line $\left\{z=0\right\}$. As far as we know, previously the results about no wandering Fatou components in $\mathbb{P}^2$ in the setting of Theorem \ref{main3},  were only established in the following two special cases:
 
 (1) For fixed triple $(a,b,c)\in \mathbb{C}^3$, when $|\lambda|$ is sufficiently small,  \cite{ji2020non}. See also \cite{lilov2004fatou}.

 (2) When $|\lambda|<1 $ and $f_0$ is Collet-Eckmann and Weakly Regular, \cite{ji2019nonuni}. See also \cite{peters2018fatou}.
\medskip
\par In order to prove Theorem \ref{main}, we prove two intermediate results which are of independent interests. The following is the first one concerning the lower bounds of the derivative along the orbit. 
\medskip

Let $f(z,w)=(\lambda z, w^d+c(z))$ as in (\ref{uni}). An orbit $\left\{x_i=(z_i,w_i)\right\}_{0\leq i\leq n}$ of $f$ is called {\em tame} if $|z_i|^k\leq |w_i|^d$ for every $0\leq i< n$ and $|z_0|<r_0$. Here the integer $k$ comes from (\ref{1.2}) and the constant $r_0$ comes from (\ref{eqn:cz-c0}).

\begin{theorem}[Expansion along tame orbits]\label{thm:derivatives}
Let $f(z,w)=(\lambda z, w^d+c(z))$ as in (\ref{uni}). Assume that $f_0$ has no attracting nor super-attracting cycle in $\mathbb{C}$. Given $0<\lambda_0<1$ there exists  $C=C(\lambda_0)>0$  such that if $\left\{x_i=(z_i,w_i)\right\}_{0\leq i\leq n}$ is a tame orbit of $f$,  then
\begin{equation*}
|Df^{n}(x_0)(v)|\geq C \lambda_0^n \min_{i=0}^{n-1} |w_i|^{d-1}.
\end{equation*}
where $v=(0,1)$ is the unit vertical vector. Moreover, if $|w_n|\le |w_j|$ for all $0\le j<n$, then
\begin{equation*}
|Df^{n}(x_0)(v)|\geq C \lambda_0^n.
\end{equation*}

\end{theorem}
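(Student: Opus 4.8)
The plan is to reduce the theorem to its \emph{moreover} part and to prove the latter by a compactness argument in which the skew product degenerates onto the invariant line. First, since $f$ is a skew product and $v=(0,1)$ is vertical, an easy induction gives $Df^{i}(x_0)(v)=\bigl(0,\,d^{\,i}\prod_{j=0}^{i-1}w_j^{\,d-1}\bigr)$, so that
\[
|Df^{n}(x_0)(v)|=d^{\,n}\prod_{i=0}^{n-1}|w_i|^{d-1}=\prod_{i=0}^{n-1}|f_0'(w_i)|,
\]
and the whole statement becomes an estimate for products of $|f_0'|$ along the non-autonomous orbit $w_{i+1}=w_i^{d}+c(\lambda^{i}z_0)$. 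I would then reduce to orbits contained in a fixed ball $\{|w|\le M\}$: since $|z_i|<r_0$ one has $|c(\lambda^i z_0)|\le A_0$ for a constant $A_0=A_0(r_0)$, so for $M=M(r_0)$ large the map $w\mapsto w^d+c(\lambda^i z_0)$ is strictly expanding outside $\{|w|\le M\}$ and the orbit escapes once it leaves this ball; if $i^{\ast}\le n$ is the first exit time, then $|Df^{n}(x_0)(v)|\ge|Df^{i^{\ast}}(x_0)(v)|$ and, apart from the trivial case $\min_{j<n}|w_j|\ge 1$, $\min_{j<n}|w_j|=\min_{j<i^{\ast}}|w_j|$, so (since $\lambda_0^{i^{\ast}}\ge\lambda_0^{n}$) the estimate for the truncated orbit implies it for the original one.

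For the \emph{moreover} statement I would argue by contradiction: suppose there are orbits $\{x^{(m)}_i\}_{0\le i\le n_m}$ with $|w^{(m)}_i|\le M$, $|w^{(m)}_{n_m}|\le|w^{(m)}_j|$ for $j<n_m$, and $\prod_{i<n_m}|f_0'(w^{(m)}_i)|<\tfrac1m\lambda_0^{n_m}$. One first sees $n_m\to\infty$ (otherwise a subsequential limit orbit would pass through the critical point $0$ at some time $<n_m$, forcing $0$ to be periodic for $f_0$, i.e.\ a super-attracting cycle, which is excluded). Then form the empirical measures $\nu_m=\tfrac1{n_m}\sum_{i<n_m}\delta_{x^{(m)}_i}$ on the compact set $\overline{B(0,r_0)}\times\overline{B(0,M)}$ and pass to a weak-$\ast$ limit $\nu$; by invariance of the orbits $\nu$ is $f$-invariant, and since the first coordinate is uniformly contracted $\nu$ is carried by $\{z=0\}$, so $\nu=\delta_0\times\nu_0$ with $\nu_0$ an $f_0$-invariant probability measure on $\mathbb{C}$. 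Taking logarithms in the violated inequality and dividing by $n_m$ gives $\int\log|f_0'|\,d\nu_m<\log\lambda_0+o(1)$; the target is the absurd conclusion $\int\log|f_0'|\,d\nu_0\le\log\lambda_0<0$, impossible because $f_0$ has no attracting or super-attracting cycle (by a theorem of Przytycki every invariant measure not supported on such a cycle has non-negative Lyapunov exponent; here one also uses that $0$ is not periodic, so $\nu_0(\{0\})=0$ and the integral is finite).

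The main obstacle is exactly the transfer from the bound on $\int\log|f_0'|\,d\nu_m$ to a bound on $\int\log|f_0'|\,d\nu_0$: weak-$\ast$ convergence handles continuous integrands, but $\log|f_0'|$ decreases to $-\infty$ at the critical point, so one must rule out an \emph{escape of the Lyapunov exponent}, i.e.\ show $\sup_m\int_{B(0,r)}\bigl|\log|f_0'|\bigr|\,d\nu_m\to0$ as $r\to0$. This is a binding/recovery estimate: by the hypothesis $|z_i|^{k}\le|w_i|^{d}$ together with \eqref{eqn:cz-c0}, a close approach to $0$ at distance $\epsilon=|w_i|$ is immediately followed by a point within $O(\epsilon^{d})$ of the critical value $c(0)$, after which the orbit shadows the forward $f_0$-orbit of $c(0)$ for a ``binding period''; since that orbit is not attracted to a cycle, over the binding period the accumulated derivative recovers enough that a close approach of depth $\epsilon$ costs only a bounded multiplicative factor and ties up only a controlled proportion of the whole orbit. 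I expect to prove this estimate itself by contradiction as well: its failure would produce a recurrent $f_0$-orbit along which some iterate $f_0^{N}$ is uniformly contracting on a disk that it maps into itself, hence an attracting periodic point, again contradicting the hypothesis. The same estimate also disposes of the degenerate cases left open in the compactness step above.

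Finally, the first inequality follows from the \emph{moreover} statement by decomposing $[0,n]$ at the successive record closest-approach times $0=\tau_0<\tau_1<\cdots<\tau_r<n$ (indices $i$ with $|w_i|<|w_j|$ for all $j<i$): on each block $[\tau_j,\tau_{j+1}]$ with $j<r$ the endpoint is the closest approach in that block, so the \emph{moreover} estimate contributes a factor $\ge C\lambda_0^{\tau_{j+1}-\tau_j}$, while the terminal block $[\tau_r,n]$ contributes the single factor $|f_0'(w_{\tau_r})|$, which is comparable to $\min_{i<n}|w_i|^{d-1}$, times a factor estimated by applying the theorem inductively in $n$ to the shorter orbit $\{x_i\}_{\tau_r+1\le i\le n}$. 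Multiplying the blocks and adjusting the constant yields $|Df^{n}(x_0)(v)|\ge C\lambda_0^{n}\min_{i<n}|w_i|^{d-1}$, the routine but slightly delicate bookkeeping for the terminal block being absorbed into the induction.
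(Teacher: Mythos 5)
Your strategy---compute $|Df^n(x_0)(v)|=\prod_{i<n}|f_0'(w_i)|$, reduce to bounded orbits, and run an ergodic compactness argument against empirical measures---is genuinely different from the paper's. The paper instead proves the estimate by an explicit binding-period induction (Lemmas~\ref{2-dim-(i)}, \ref{lem:kappa0}, \ref{lem:irret}, \ref{lem:freturn}) built on the one-dimensional derivative estimates of Levin--Przytycki--Shen recorded in Proposition~\ref{prop:1-dim-exp}.

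However, there is a genuine gap, and it sits exactly where you flag the ``main obstacle.'' The weak-$\ast$ limit step does not produce the inequality you need without the uniform integrability of $\log|f_0'|$ with respect to $\{\nu_m\}$. Since $\varphi=\log|f_0'|$ is upper semicontinuous and bounded above, the portmanteau inequality gives $\limsup_m\int\varphi\,d\nu_m\le\int\varphi\,d\nu_0$; combined with $\int\varphi\,d\nu_m<\log\lambda_0+o(1)$, this yields a statement of the form ``$\int\varphi\,d\nu_0\ge(\text{something }\le\log\lambda_0)$,'' which is vacuous. To obtain the contradiction $\int\varphi\,d\nu_0\le\log\lambda_0<0$ against Przytycki's theorem, you must show there is no escape of $\log$-mass to the critical point, i.e.\ $\sup_m\int_{B(0,r)}|\log|f_0'||\,d\nu_m\to0$ as $r\to0$. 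This non-escape estimate is not a technical afterthought; it is essentially the entire analytic content of Theorem~\ref{thm:derivatives}.

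Your proposed proof of the non-escape estimate---that its failure ``would produce a recurrent $f_0$-orbit along which some iterate $f_0^N$ is uniformly contracting on a disk that it maps into itself''---does not work as stated and gives no real handle on the problem. What actually controls the cost of a close approach is the derivative growth along the postcritical orbit during the binding period, and the deep input needed for this is that for unicritical $f_0$ with no attracting cycle the lower Lyapunov exponent of the critical value is nonnegative, $\liminf_n\frac1n\log|Df_0^n(c(0))|\ge 0$ (Proposition~\ref{prop:levin}~(1), from \cite{levin2016lyapunov}). This is a hard theorem, not something recoverable from a soft contradiction producing an attracting disk; indeed, in the multicritical case the analogous statement is false (Przytycki--Rohde), which is why the paper restricts to the unicritical setting. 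You would further need a two-dimensional version of the binding-recovery estimate allowing for the $z$-perturbation, which is what Lemma~\ref{lem:bdcv} accomplishes using the tameness hypothesis $|z_i|^k\le|w_i|^d$ together with \eqref{eqn:cz-c0}. Your sketch gestures at this but contains no proof. The deduction of the first inequality from the ``moreover'' part by decomposing at record closest approaches is sound and parallels the paper's argument; the single genuine gap is the non-escape/binding-recovery estimate, and a complete proof along your route would still have to import Proposition~\ref{prop:levin} or re-derive it.
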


\medskip
\par Theorem  \ref{thm:derivatives} is a generalization of the corresponding one-dimensional estimate  obtained by Levin-Przytycki-Shen in \cite{levin2016lyapunov}.
\medskip
\par Our next intermediate result is about the abundance of points satisfying the slow approach condition. For  $\alpha>0$, a point $x_0\in B(0,r_0)\times\mathbb{C}$  is called   {\em $\alpha-$slow approach} (to the critical point),
\begin{equation*}
|w_n|\geq e^{-\alpha n}
\end{equation*}
for every $n$ large, where $w_n$ is the projection of $f^n(x_0)$ to the $w-$coordinate.
\begin{theorem}\label{slowapp}
Let $f(z,w)=(\lambda z, w^d+c(z))$ as in (\ref{uni}). Assume $(0,0)$ is not a periodic point. Then for  every $\alpha>0$, Lebesgue a.e. point in $B(0,r_0)\times \mathbb{C}$ is $\alpha-$slow approach.
\end{theorem}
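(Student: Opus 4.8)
Fix $\alpha>0$; it suffices to show that
$B_\alpha:=\{x\in B(0,r_0)\times\mathbb C:\ |w_n|<e^{-\alpha n}\ \text{for infinitely many }n\}$
has zero Lebesgue measure. Every point of the escaping component $\Omega_\infty$ satisfies $|w_n|\to\infty$ and hence lies outside $B_\alpha$, so we may restrict to the set $K$ of points of $B(0,r_0)\times\mathbb C$ with bounded forward orbit; there is an $M<\infty$ with $|w_n|\le M$ for all $n$ whenever the initial point lies in $K$. Writing $A_m:=\{x\in K:|w_m|<e^{-\alpha m}\}$ we have $B_\alpha=\bigcap_N\bigcup_{m\ge N}A_m$, so it is enough to prove $\mathrm{Leb}\!\big(\bigcup_{m\ge N}A_m\big)\to 0$ as $N\to\infty$.

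I would first dispose of the case where $f_0$ has an attracting cycle. Since $0$ is the only critical point of $f_0$ in $\mathbb C$ and $(0,0)$ is not periodic, this cycle cannot be super-attracting; hence $f_0$ is hyperbolic and $J(f_0)$ is a Lebesgue null set. Because $z_n=\lambda^n z_0\to 0$ exponentially, the skew product $f$ is then uniformly expanding in the vertical direction along any orbit shadowing $L\cap J(f_0)$, and a standard partial-hyperbolicity argument over $L$ (which can also be extracted from the exponential form that Theorem~\ref{thm:derivatives} takes in this situation) shows that the set of points of $K$ not converging to the lift of the attracting cycle is a null set; the remaining points of $K$ converge to that cycle, which stays a fixed positive distance from $\{w=0\}$, so they satisfy $|w_n|\ge\mathrm{const}>0$ eventually and are $\alpha$-slow approach. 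From now on I therefore assume $f_0$ has no attracting or super-attracting cycle, which is exactly the hypothesis of Theorem~\ref{thm:derivatives}.

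In this case the plan is a Borel--Cantelli argument built on the derivative estimate. Slicing over the base variable, for fixed $z_0$ with $|z_0|<r_0$ the fibered iterate is $f^m(z_0,w_0)=(\lambda^m z_0,\,P_{z_0,m}(w_0))$ with $P_{z_0,m}$ a monic polynomial in $w_0$ of degree $d^m$, and the fiber slice of $A_m$ is $P_{z_0,m}^{-1}\!\big(B(0,e^{-\alpha m})\big)$, a union of at most $d^m$ topological disks, one around each fiber preimage of the critical curve $\{w=0\}$. Along an orbit $\{(z_i,w_i)\}_{0\le i<m}$ one has $|P_{z_0,m}'(w_0)|=|Df^m(z_0,w_0)(v)|$ for the vertical vector $v=(0,1)$, so Theorem~\ref{thm:derivatives} applies whenever the orbit stays in the region $|z_i|^k\le|w_i|^d$: for a preimage whose orbit stays in this region and remains $e^{-\alpha' m}$-far from $\{w=0\}$, for a slightly slower rate $0<\alpha'<\alpha$, the corresponding disk is univalent over the target and, by Koebe distortion together with the lower bound of Theorem~\ref{thm:derivatives}, has area $\lesssim e^{-2\alpha m}\lambda_0^{-2m}e^{2(d-1)\alpha' m}$. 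The reason the sum over the $d^m$ disks can be made to converge is that, by equidistribution and regularity of the preimages of the critical curve, only an exponentially small proportion of these $d^m$ branches are ``bad'', i.e.\ have an orbit that enters $\{|z_i|^k>|w_i|^d\}$ or comes within $e^{-\alpha' i}$ of $\{w=0\}$ at some intermediate time $i<m$.

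The main obstacle is the honest bookkeeping of these bad branches, and in particular the balance between the $d^m$ sheets and the merely sub-exponential expansion provided by Theorem~\ref{thm:derivatives}. Branches whose orbit enters $\{|z_i|^k>|w_i|^d\}$ can be absorbed, up to an arbitrarily small set, into $\Omega_\infty$ using the escaping estimates already developed; a branch that comes within $e^{-\alpha' i}$ of $\{w=0\}$ at a first intermediate time $i<m$ lies in $f^{-i}$ of an $e^{-\alpha' i}$-neighborhood of the critical curve in the fiber over $\lambda^i z_0$, and the stretch from time $i$ to time $m$ is controlled exactly as above, so summing over $i$ and inducting on $m$ — using $z_n\to 0$ exponentially to compare the ``return'' dynamics with that of $f_0$, and the non-periodicity of $(0,0)$, equivalently $f_0^j(0)\ne 0$ for all $j\ge1$, to guarantee that these neighborhoods of the critical curve are genuinely contracted rather than (pre)periodic — yields a bound on $\mathrm{Leb}\!\big(\bigcup_{m\ge N}A_m\big)$ tending to $0$ as $N\to\infty$. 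This gives $\mathrm{Leb}(B_\alpha)=0$; since $\alpha>0$ was arbitrary, Lebesgue-a.e.\ point of $B(0,r_0)\times\mathbb C$ is $\alpha$-slow approach, as claimed.
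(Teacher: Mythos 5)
There is a genuine gap, and it sits exactly where you flag ``the main obstacle.'' Your plan is to cover the fiber slice of $A_m$ by the $d^m$ branches of $P_{z_0,m}^{-1}(B(0,e^{-\alpha m}))$ and bound each branch's area via Koebe and Theorem~\ref{thm:derivatives}. But Theorem~\ref{thm:derivatives} only gives the \emph{sub-exponential} lower bound $|Df^m(x)(v)|\ge C\lambda_0^m\min_i|w_i|^{d-1}$ with $\lambda_0<1$, so even a perfectly ``good'' branch only gets area $\lesssim e^{-2\alpha m}\lambda_0^{-2m}e^{2(d-1)\alpha' m}$, and summing over the branches yields $\bigl(d\,e^{-2\alpha}\lambda_0^{-2}e^{2(d-1)\alpha'}\bigr)^m$, which diverges for every small $\alpha$ — precisely the regime the theorem must cover. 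Discarding an exponentially small proportion of ``bad'' branches does not help, because the divergence already comes from the good ones; and the concluding ``summing over $i$ and inducting on $m$'' is not an argument that repairs this. The statement is genuinely delicate for small $\alpha$ and cannot be obtained by absolute area bounds per branch.

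The paper's proof uses two ideas absent from your proposal. First, it replaces the absolute bound per branch by a \emph{density} bound: for a fixed vertical line $L_z$, it compares $E_{n,2\alpha}$ with the larger set $E_{n,\alpha}$, showing that each connected component $V$ of $E_{n,\alpha}$ satisfies $\mathrm{Vol}(V\cap E_{n,2\alpha})/\mathrm{Vol}(V)\le e^{-\alpha' n}$ by Koebe distortion for \emph{multivalent} maps; since the components are disjoint and uniformly bounded, Borel--Cantelli follows with no branch counting. This requires that $\xi_n\colon V\to B(0,e^{-\alpha n})$ have degree at most $d^{N}$ with $N=N(z)$ independent of $n$, i.e.\ that the pullback meets the critical curve $\{w=0\}$ at most $N$ times along the orbit. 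Second — and this is the heart of the matter — establishing that bounded criticality for a.e.\ $z$ requires a Benedicks--Carleson-type \emph{parameter exclusion in the horizontal variable}: one removes the null set of $z$ for which the critical orbit $\xi_n(\lambda^m z,0)$ returns too close to $0$. The exclusion is made quantitative by the transversality identity (Lemma~\ref{relation}) relating $\frac{d}{dz}\xi_l(z,0)$ to $k z^{k-1}X_0\,Df^{l-1}(f(z,0))(v)$, where the nonvanishing of $X_0=\sum_{i\ge0}\lambda^{ik}/(f_0^i)'(c(0))$ is Levin's theorem (Proposition~\ref{prop:levin}(2)); this lets one show the bad set $K_{\alpha,m,l}$ has exponentially small relative measure (Lemma~\ref{K}). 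Your proposal works entirely fiber by fiber, never excludes any set of base points $z$, and never invokes the nonvanishing of this series, so the essential mechanism of the proof is missing.
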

\par We note that under additional hyperbolicity assumptions on $f_0$, the above theorem was proved in \cite{ji2019nonuni}. Here we do not make any hyperbolicity assumptions on $f_0$. Theorem  \ref{slowapp}  is also a generalization of the corresponding one-dimensional result  obtained by Levin-Przytycki-Shen in \cite{levin2016lyapunov}.

\medskip

Throughout the paper, the vector $v$ will denote the unit vertical tangent vector $v=(0,1)$.

\medskip
\subsection{Previous results} The problem of classification of Fatou components for polynomial skew products gets much attention in recent years. We refer the readers to the survey paper by Dujardin \cite{dujardin2021geometric}.
\medskip
\par As we have mentioned, wandering Fatou components for parabolic polynomial skew products were constructed in \cite{astorg2016two}, see also Astorg-Boc Thaler-Peters \cite{astorg2019wandering}, Hahn-Peters \cite{hahn2021polynomial} and Astorg-Boc Thaler \cite{astorg2022dynamics} for constructions of wandering domains using similar technique.
\medskip
\par In the case that $f$ is an attracting polynomial skew product, previously there are two kinds of results concerning non-existence of wandering Fatou components, both making additional assumptions on $f$. The first type of results assume the smallness of the multiplier $\lambda$. It was first showed by Lilov \cite{lilov2004fatou} that there is no wandering Fatou component if $p'(0)=0$. Later this was generalized by the first named author \cite{ji2020non} that there is no wandering Fatou component if $\lambda$ is sufficiently small. The second type of results make hyperbolicity conditions on $f_0$, see for instance Peters-Vivas \cite{peters2016polynomial}, Peters-Smit \cite{peters2018fatou} and the first named author \cite{ji2019nonuni}. For example the result in \cite{ji2019nonuni} asserts that if $f_0$ satisfies Collet-Eckmann condition and a Weakly Regular condition, then there is no wandering Fatou component and the Julia set has zero volume. Compared with Theorem \ref{main}, here we do not make any assumptions on $\lambda$ nor on the hyperbolicity of $f_0$.
\medskip
\par The elliptic polynomial skew products were studied by Peters-Raissy \cite{peters2019fatou}.  Finally we mention that in the context of complex H\'enon maps, wandering Fatou components were constructed by Berger-Biebler \cite{berger2022emergence}.
\medskip

The topology of Fatou components of polynomial skew products on $\mathbb{C}^2$ was studied by Roeder \cite{MR2769221}.
\medskip
\subsection{On methods of the proof}
  We introduce new methods in order to prove Theorem \ref{main}. Our methods are based on the {\em parameter exclusion technique} and the {\em binding argument}  initiated by Jakobson \cite{jakobson1981absolutely} and Benedicks-Carleson  \cite{benedicks1985iterations}. They successfully used these two techniques to show the existence of absolutely continuous invariant measures for a positive volume parameter set of quadratic interval maps.
  \medskip
  \par As far as we know this is the first time that these two techniques are used to solve the problems about wandering domains.  This is actually built on the following interesting analogue between families of one-dimensional dynamical systems and polynomial skew products. Let $\left\{q_z\right\}_{z\in\mathbb{D}}$ be a family of polynomials of same degree, parametrized by the unit disk. We can associate this family to a polynomial skew product as $$f:\mathbb{D}\times \mathbb{C}\to\mathbb{D}\times \mathbb{C}, f(z,w):=(z,q_z(w)).$$ Recall that a polynomial skew product with an attracting invariant line $L$ has the normal form $$f: U\times \mathbb{C}\to U\times \mathbb{C}, f(z,w)=(\lambda z, q_z(w))$$ in a neighborhood $U\times \mathbb{C}$ of $L$, where $|\lambda|<1$. Now it is obvious to observe the similarity of these two objects. Unlike  excluding bad parameters as Jakobson and Benedicks-Carleson did,  we exclude bad vertical lines instead for attracting polynomial skew products. The one-dimensional estimates in Levin-Przytycki-Shen \cite{levin2016lyapunov} are also involved, to ensure the binding argument work.
   To prove Theorem \ref{thm:derivatives} we use the binding argument  with the help of the one-dimensional estimates in \cite{levin2016lyapunov}. To prove Theorem \ref{slowapp} we use the binding argument and the parameter exclusion technique.

   \par To complete the proof of Theorem \ref{main}, we also adapt some arguments in \cite{lilov2004fatou} and \cite{ji2020non}: assume by contradiction that there is a wandering Fatou component $\Omega$. Then by Theorem \ref{thm:derivatives} and \ref{slowapp} we can construct a vertical disk (i.e. a disk contained in a vertical line) $D\subset\Omega$ such that the forward image $f^n(D)$ contains a vertical disk of radius at least $\lambda_0^n$ when $n$ large enough, where $\lambda_0>0$ is a constant smaller but close to 1. Since $D$ is contained in a wandering Fatou component, the orbit of $D$ cluster only on the Julia set of $f_0$. Again by using the binding argument, we show that this vertical disk actually can not exist. This gives a contradiction hence finishes the proof.
\subsection{Organization of the paper}
\par The organization of the paper is as follows. In Section 2 we do some preliminaries, we recall some one-dimensional results and we introduce the notion of binding time. In section 3 we prove Theorem \ref{thm:derivatives}. In Section 4 we prove Theorem \ref{slowapp}. In Section 5 we complete the proof of Theorem \ref{main}, and we prove Theorem \ref{main2} and \ref{main3}.  Finally in Section 6 we discuss a partial generalization of our results  when $f$ is multicritical.
\medskip
\par {\em Acknowledgements.} The authors are supported by National Key Research and Development Program of China (Grant No 2021YFA1003200). The first named author Zhuchao Ji is supported by ZPNSF grant (No.XHD24A0201) and NSFC Grant (No.12401106). The second named author Weixiao Shen is also supported by the New Cornerstone Foundation through the New Cornerstone Investigator Program and the X'plore prize. 
\bigskip
\section{Preliminaries}
\subsection{Expansion of one-dimensional unicritical polynomials}
In the following $f_0$ will denote a unicritical polynomial
\begin{equation}\label{uni-one}
  f_0(w)=w^d+c, \;\text{where} \;c\in \mathbb{C}, d\geq 2.
\end{equation}
The following proposition summarizes results on lower bounds of derivatives in \cite{levin2016lyapunov}.
\begin{proposition}\label{prop:1-dim-exp}
Let $f_0(w)=w^d+c$ as in (\ref{uni-one}) such that $f_0$ has no attracting nor superattracting cycle in $\mathbb{C}$. Then the following hold:
Given any $0<\lambda_0<1$ there exists $C=C(\lambda_0)>0$ such that for any $w\in \mathbb{C}$ and any $n\in\mathbb{N}$,
\begin{equation}\label{eqn:1-dim-der}
|Df_0^n(w)|\ge C \lambda_0^n \min_{j=0}^{n-1}|f_0^j(w)|^{d-1}.
\end{equation}
More precisely,
\begin{enumerate}
\item[(i)] Given $0<\lambda_0<1$ and $\delta>0$ there exists $\kappa=\kappa(\lambda_0,\delta)>0$ such that for every $w\in\mathbb{C}$ satisfying $|f_0^j(w)|\geq \delta$ for each $0\le j< n$, we have
\begin{equation*}
 |Df_0^n(w)|\geq \kappa\lambda_0^n.
\end{equation*}
\item [(ii)] Given $0<\lambda_0<1$, there exists $\delta_0>0$ such that for any $\delta\in (0,\delta_0]$, if $|w|<\delta$, $|f_0^n(w)|\le \delta$, and $|f_0^j(w)|>\delta$ for all $1\le j<n$, then
    $$|Df_0^n(w)|\ge \lambda_0^n \min \left(1, \left(\frac{|w|}{|f_0^n(w)|}\right)^{d-1}\right).$$
\item [(iii)] Given $0<\lambda_0<1$, there exists $\kappa_0=\kappa_0(\lambda_0)>0$ such that if $|f_0^j(w)|\ge |f_0^n(w)|$ for each $0\le j<n$, then
    \begin{equation*}
  |Df_0^n(w)|\geq \kappa_0\lambda_0^n.
    \end{equation*}
\end{enumerate}
\end{proposition}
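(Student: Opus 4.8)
The plan is to deduce all four assertions from the main results of \cite{levin2016lyapunov}, organized around a decomposition of the orbit $w,f_0(w),f_0^2(w),\dots$ into \emph{free} pieces, which stay outside a small disk $B(0,\delta)$ around the critical point $0$, and \emph{returns}, which land inside it. A preliminary observation makes the return analysis manageable: since $f_0(w)=w^d+c$ is unicritical with critical point $0$, the hypothesis that $f_0$ has no attracting cycle already forces $|c|$ to be bounded below by a constant depending only on $d$. Indeed the fixed points of $f_0$ are the roots of $w^d-w+c$, the product of whose moduli equals $|c|$, while a fixed point $w^*$ is attracting precisely when $|w^*|<d^{-1/(d-1)}$; hence the absence of attracting fixed points gives $|c|\ge d^{-d/(d-1)}$. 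Consequently there is $\delta_0>0$, depending only on $d$, such that $|w|<\delta_0$ implies $|f_0(w)|\ge |c|-|w|^d>\delta_0$: right after a return the orbit jumps to a point $f_0(w)\approx c$ at a definite distance from $0$.

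The core of the proof is assertion (i), a quantitative Mañé-type estimate: a polynomial with no non-repelling cycle outside a neighborhood of its critical set is expanding along orbit segments that avoid that neighborhood, and \cite{levin2016lyapunov} shows that the expansion rate may be taken to be $\lambda_0^n$ for any prescribed $\lambda_0<1$, at the price of a multiplicative constant $\kappa=\kappa(\lambda_0,\delta)$. In our setting $f_0$ may still carry parabolic, Siegel, or Cremer cycles, but along an orbit segment trapped near such a cycle the derivative of $f_0^n$ decays at worst polynomially or stays bounded below, which is in any case $\ge\kappa\lambda_0^n$; so the only obstruction to the bound $|Df_0^n|\gtrsim\lambda_0^n$ is proximity to the critical point $0$, which the hypothesis of (i) excludes. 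To reduce to the framework of \cite{levin2016lyapunov} one passes, by a compactness argument over all admissible finite orbit segments, to forward-invariant compact sets disjoint from $0$ and containing no attracting cycle. I would quote (i) directly; this step is where all the genuine difficulty lies, and I would treat it as a black box.

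Granting (i), the remaining assertions are bookkeeping. For (ii) one writes $|Df_0^n(w)|=d|w|^{d-1}\,|Df_0^{n-1}(f_0(w))|$; by the preliminary observation $f_0(w)$ sits at distance $\gtrsim 1$ from $0$ and the segment $f_0(w),\dots,f_0^{n-1}(w)$ stays outside $B(0,\delta)$, so a disk of definite radius around $f_0^n(w)$ pulls back along this segment with bounded distortion (the shrinking-neighborhood/Koebe argument of \cite{levin2016lyapunov}); combining the resulting lower bound for $|Df_0^{n-1}(f_0(w))|$ with the factor $d|w|^{d-1}$ and the fact that a round disk of radius $\asymp|f_0^n(w)|^d$ about $0$ has preimage a disk of radius $\asymp|f_0^n(w)|$ yields (ii). Assertion (iii) is the special case in which the return at time $n$ is the deepest of the whole segment; it does not follow by naive iteration of (ii), since intermediate returns may be shallower than later ones, so I would follow the dedicated argument of \cite{levin2016lyapunov}. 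Finally, the master inequality \eqref{eqn:1-dim-der} follows by a case analysis on $\delta:=\min_{0\le j<n}|f_0^j(w)|$: when $\delta\ge\delta_0$ it reduces to (i) (trivially from super-expansion near $\infty$ if $\delta$ is large, and otherwise absorbing the bounded factor $\delta^{d-1}$ into the constant), while when $\delta<\delta_0$ one splits the orbit at the deepest visit and combines (i), (ii), and the preliminary observation. Thus the one genuine obstacle is the quantitative Mañé estimate of \cite{levin2016lyapunov}; everything else is a matter of organizing returns.
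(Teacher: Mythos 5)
Your treatment of (i) and (ii) matches the paper's: both are quoted from \cite{levin2016lyapunov} (the paper cites Lemma 2.2 there for (i), and Lemmas 4.2 and 2.1 for the two cases $|w|\ge |f_0^n(w)|$ and $|w|<|f_0^n(w)|$ of (ii)), and your preliminary lower bound on $|c|$ is correct though not actually needed. The gap lies in the ``bookkeeping'' you dismiss, namely the passage from (i)--(ii) to (iii) and to \eqref{eqn:1-dim-der}. For (iii) the paper does not quote a ``dedicated argument'' of \cite{levin2016lyapunov}: it proves the statement itself, by decomposing the orbit at the successive \emph{record} returns $n_1<n_2<\cdots$, where $n_1$ is the first entry into $B(0,\delta_0)$ and $n_{i+1}$ is the first time after $n_i$ with $|f_0^{n_{i+1}}(w)|\le |f_0^{n_i}(w)|$; on each such block the $\min$ in (ii) equals $1$, and the hypothesis of (iii) forces the last record time to coincide with $n$, so the product of the block estimates closes up with the uniform constant $\kappa_0$. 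You correctly observe that naive iteration of (ii) fails, but you supply neither this decomposition nor any replacement for it.

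For the master inequality your recipe --- split at the deepest visit $s$ and combine (i), (ii) and the preliminary observation --- does not work as stated. The initial segment $[0,s]$ is handled by (iii) (which you omit from the combination), but the tail $[s+1,n]$ only satisfies $|f_0^j(w)|\ge \delta:=\min_j|f_0^j(w)|$, and (i) applied at scale $\delta$ yields $\kappa(\lambda_0,\delta)\lambda_0^{n-s-1}$ with a constant that degenerates as $\delta\to 0$; recursing on the tail instead accumulates a product $\prod_i\delta_i^{d-1}$ over all successive minima, which is far smaller than the single factor $\delta^{d-1}$ the statement requires. The paper's fix is a two-scale decomposition: after reducing (via (iii)) to the case where the minimum is attained at $j=0$, it splits at the \emph{last} visit $s$ to the fixed scale $\delta_0$, applies (i) with the uniform constant $\kappa(\lambda_0,\delta_0)$ to the free tail, and telescopes (ii) over the times $0=s_0<s_1<\cdots<s_p=s$ at which $|f_0^{s_{i+1}}(w)|=\min_{j>s_i}|f_0^j(w)|$, so that the ratios $\left(|f_0^{s_i}(w)|/|f_0^{s_{i+1}}(w)|\right)^{d-1}$ telescope to $\left(|w|/|f_0^{s}(w)|\right)^{d-1}$ and the factor $|f_0^s(w)|^{d-1}$ cancels against the derivative $d|f_0^s(w)|^{d-1}$ of the single step at time $s$. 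Without this argument the constant $C(\lambda_0)$ in \eqref{eqn:1-dim-der} cannot be made uniform, so the key combinatorial content of the proposition is missing from your proposal.
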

\begin{proof} We first prove (i)-(iii).
\begin{enumerate}
\item [(i)] This follows from ~\cite[Lemma 2.2]{levin2016lyapunov}.
\item [(ii)] If $|w|\geq |f_0^n(w)|$, this follows from ~\cite[Lemma 4.2]{levin2016lyapunov}. If $|w|< |f_0^n(w)|$, this follows from  \cite[Lemma 2.1]{levin2016lyapunov}. 
\item [(iii)] Fix $\lambda_0\in (0,1)$, let $\delta_0>0$ be given by (ii) and let $\kappa_0=\kappa(\lambda_0,\delta_0)$ be given by (i).
If $|f_0^n(w)|\ge \delta_0$, then the desired estimate follows from (i). Otherwise, define $\cdots>n_2 >n_1\ge 0$ inductively
such that
\begin{itemize}
\item $n_1$ is  minimal such that $|f_0^{n_1}(w_1)|<\delta_0$,
\item for each $i\ge 1$, $n_{i+1}$ is minimal such that $n_{i+1}>n_i$ and $|f_0^{n_{i+1}}(w)|\le |f_0^{n_i}(w)|.$
\end{itemize}
Since $|f_0^n(w)|\le |f_0^j(w)|$ for all $0\le j<n$, there exists $k$ such that $n_k= n$. So by (i) and (ii)
$$|Df_0^{n}(w)|=|Df_0^{n_1}(w)|\prod_{i=1}^{k-1} |Df_0^{n_{i+1}-n_i}(f_0^{n_i}(w))|\ge \kappa_0 \lambda^n.$$
\end{enumerate}
Let us finally prove (\ref{eqn:1-dim-der}). By (iii), we only need to consider the case $|w|< |f_0^j(w)|$ for each $0< j\le n$. (If $|f_0^s(w)|=\min_{j=0}^{n-1}|f_0^i(w)|$ for some $0<s\leq n-1$, estimate $|Df_0^s(w)|$ and $|Df_0^{n-s}(f^s(w))|$ separately.)  By (i), we may assume $|w|<\delta_0$. Let $s$ be maximal in $\{0,1,\ldots, n\}$ such that $|f_0^s(w)|\le \delta_0$. By (i) again,
$$|Df_0^{n-s}(f_0^s(w))|\ge d|f_0^s(w)|^{d-1}\kappa(\lambda_0,\delta_0) \lambda_0^{n-s-1}.$$
Let $0=s_0<s_1<\cdots<s_p=s$ be such that $s_{i+1}$ is minimal such that $s_{i+1}>s_i$ and $|f_0^{s_{i+1}}(w)|=\min_{j=s_i+1}^n |f_0^j(w)|$. Then by (ii),
    $$|Df_0^s(w)|=\prod_{i=1}^p |Df_0^{s_{i+1}-s_i}(f_0^{s_i}(w))\ge \lambda_0^s \left(\frac{|w|}{|f_0^s(w)|}\right)^{d-1}.$$
Thus
$$|Df_0^n(w)|\ge \kappa(\lambda_0,\delta_0) \lambda_0^n |w|^{d-1}.$$
\end{proof}

\medskip
\par The following proposition was also proved in \cite{levin2016lyapunov}.
\begin{proposition}\label{prop:levin}
Let $f_0(w)=w^d+c$ as in (\ref{uni-one}) such that $f_0$ has no attracting nor superattracting cycle in $\mathbb{C}$. Then
\begin{enumerate}
\item The lower Lyapupov exponent of $f_0$ at $c$ is nonnegative:
$$\liminf_{n\to\infty} \frac{1}{n}\log |Df_0^n(c)|\ge 0,$$
\item The power series
\begin{equation*}
F(z)=1+\sum_{n=1}^{+\infty}\frac{z^n}{(f_0^n)'(c)}
\end{equation*}
has the radius of convergence at least 1, and $F(z)\neq 0$ for every $|z|<1$.
\end{enumerate}
\end{proposition}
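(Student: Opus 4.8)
The plan is to reduce both parts to the one-dimensional estimates of Proposition~\ref{prop:1-dim-exp}, the only substantial extra input being that the lower Lyapunov exponent of $f_0$ along the critical orbit is non-negative. For part (1), write $D_n:=(f_0^n)'(c)$; since $f_0'(w)=dw^{d-1}$ one has $|D_n|=d^n\prod_{j=0}^{n-1}|f_0^j(c)|^{d-1}$. If the critical orbit $\{f_0^n(c)\}_n$ is unbounded, then $|D_n|$ grows super-exponentially and the lower exponent is $+\infty$, so we may assume it is bounded. Inequality~\eqref{eqn:1-dim-der} with $w=c$ gives, for every $\lambda_0\in(0,1)$,
\begin{equation*}
\tfrac1n\log|D_n|\ \ge\ \log\lambda_0+\tfrac1n\log C(\lambda_0)+(d-1)\tfrac1n\log\Big(\min_{0\le j<n}|f_0^j(c)|\Big),
\end{equation*}
so, $\lambda_0<1$ being arbitrary, (1) reduces to the statement that $\tfrac1n\log\big(\min_{0\le j<n}|f_0^j(c)|\big)\to0$, i.e. that the critical orbit does not approach $0$ at an exponential rate. (Here $f_0^j(c)\ne0$ for all $j$, for otherwise $0$ would be a super-attracting periodic point.) This sub-exponential recurrence is the substantive point for (1); it can be proved by contradiction, showing that if $|f_0^{s_i}(c)|\le e^{-\gamma s_i}$ along record-low times $s_i\to\infty$, then — using $f_0(B(0,\rho))=B(c,\rho^d)$ and Koebe distortion control for the univalent branch of $f_0^{s_i}$ near $c$ (whose forward orbit stays at distance $\ge|f_0^{s_i}(c)|$ from $0$ up to time $s_i$) — a suitable return map sends a round disk about $0$ strictly into itself, producing an attracting cycle and contradicting the hypothesis.

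For part (2), the radius of convergence is immediate from (1): $\limsup_n|1/D_n|^{1/n}=e^{-\liminf_n\frac1n\log|D_n|}\le1$, so $F$ converges on $\{|z|<1\}$. For the non-vanishing I would first make the dynamical meaning of $F$ explicit. A short induction using $D_n=f_0'(f_0^{n-1}(c))D_{n-1}$ shows that if $V^{(z)}_0=0$ and $V^{(z)}_{n+1}=(V^{(z)}_n)^d+c+\epsilon z^{n}$, then $\partial_\epsilon V^{(z)}_{n+1}\big|_{\epsilon=0}=D_n\sum_{j=0}^n z^j/D_j$, whence
\begin{equation*}
F(z)=\lim_{n\to\infty}\frac1{D_n}\,\partial_\epsilon V^{(z)}_{n+1}(0)\Big|_{\epsilon=0}.
\end{equation*}
Equivalently, for the skew product $g_z(u,w)=(zu,\,w^d+c+u)$ — which has the form~\eqref{uni} with $\lambda=z$, $k=1$ and invariant-line map $f_0$ — the quantity $F(z)$ is, up to a renormalizing factor, the $w$-derivative at the base point (which follows the critical orbit of $f_0$ on the invariant line) of the $n$-th iterate of the critical curve $\{w=0\}$ of $g_z$. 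Thus the statement $F\ne0$ on $\mathbb D$ is precisely the renormalized transversality of the post-critical set of $g_z$ to the vertical direction along the invariant line, for every $|z|<1$ — which is exactly what the arguments of the paper require.

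The non-vanishing of $F$ on $\mathbb D$ is the main obstacle. The natural approach is by contradiction: suppose $F(z_0)=0$ for some $z_0\in\mathbb D\setminus\{0\}$ and set $G_k(z):=\sum_{n\ge0}z^n/(f_0^n)'(f_0^k(c))=D_kz^{-k}\big(F(z)-\sum_{m<k}z^m/D_m\big)$. On one hand part (1) forces $|G_k(z_0)|\le C_\eta|D_k|e^{\eta k}$ for every $\eta>0$; on the other hand, when the critical orbit makes a sufficiently deep return $|f_0^k(c)|\ll e^{-Ak/(d-1)}$ at a record-low time $k$ (here $|D_j|\le e^{Aj}$ for the bounded orbit), the $n=1$ term $z_0/(d\,|f_0^k(c)|^{d-1})$ of $G_k(z_0)$ dominates while, by Proposition~\ref{prop:1-dim-exp}(iii), $|D_k|\ge\kappa_0\lambda_0^k$ stays comparatively large, and a contradiction ensues. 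Ruling out $F(z_0)=0$ also in the complementary regime of controlled recurrence — and doing so uniformly for all $z_0\in\mathbb D$ — is the heart of the matter; this is exactly the transversality statement of Levin--Przytycki--Shen, and here I would follow their approach (shrinking neighborhoods / perturbation of weakly expanding critical orbits).
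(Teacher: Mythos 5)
The paper's ``proof'' of this proposition is a pure citation: part (1) is the main theorem of Levin--Przytycki--Shen \cite{levin2016lyapunov}, and part (2) is their Corollary 5.1, deduced from (1) via Levin's result in \cite{levin2002analytic}. Your proposal ultimately rests on exactly the same sources, and the parts you actually prove are correct: the formula $|D_n|=d^n\prod_{j<n}|f_0^j(c)|^{d-1}$, the disposal of the unbounded-orbit case, the reduction of (1) via \eqref{eqn:1-dim-der} to the statement that the critical orbit approaches $0$ subexponentially, the deduction of the radius of convergence from (1), and the identification of $F$ as the renormalized parameter/vertical derivative (your induction $u_{n+1}=D_n\sum_{j=0}^n z^j/D_j$ checks out). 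So as a citation-with-commentary your proposal matches the paper.

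Read as a self-contained proof, however, it has gaps precisely at the two points the paper outsources. First, the claim that exponentially deep record-low returns would produce an attracting cycle is the entire content of the Levin--Przytycki--Shen theorem; your one sentence hides the real difficulties (existence and distortion control of the univalent inverse branch of $f_0^{s_i}$ over a disk of definite relative size around $f_0^{s_i}(c)$, and the actual contraction estimate for the composed return map), so this is a statement of strategy, not a proof. Second, your sketch for $F\neq 0$ does not work as written: if $F(z_0)=0$ then $G_k(z_0)=-D_kz_0^{-k}\sum_{m<k}z_0^m/D_m$, which part (1) bounds only by $C|D_k|\,|z_0|^{-k}$ --- an exponentially growing quantity with rate $\log(1/|z_0|)$, not $C_\eta|D_k|e^{\eta k}$; moreover the ``deep return'' regime you treat is vacuous once (1) is established, and asserting that the $n=1$ term of $G_k(z_0)$ ``dominates'' ignores possible cancellation. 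You acknowledge that the remaining case is ``the heart of the matter'' and defer to Levin's transversality argument --- which is fine and is what the paper does, but it means the non-vanishing of $F$ is cited, not proved.
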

\begin{proof} The first statement is one of the main theorems in \cite{levin2016lyapunov}, and the second one is \cite[Corollary 5.1]{levin2016lyapunov} which follows from the first by a result of Levin in \cite{levin2002analytic}.
\end{proof}
\subsection{The binding time}
In this subsection we introduce a version of the notion of {\em binding time} which was first used by Benedicks-Carleson \cite{benedicks1985iterations} to study perturbation of non-uniformly expanding interval maps. Throughout we fix a map $f(z,w)=(\lambda z, w^d+c(z))$ as in (\ref{uni}).

\begin{definition}
Let $x\in B(0,r_0)\times\mathbb{C}$ and $y\in B(0,r_0)\times\mathbb{C}$. Let $\mu>0$ be a constant. For every integer $m\geq 0$ we define $\xi_m(x)$ to be  the projection of $f^m(x)$ to the $w-$coordinate. We define the $\mu$-binding time of the pair $(x,y)$, denoted by $b_{\mu}(x,y)$, to be the infimum of the set of non-negative integer $n$ such that
\begin{equation}\label{eqn:binding0}
|\xi_n(x)-\xi_n(y)|\geq \frac{\mu \min (|\xi_n(x)|,|\xi_n(y)|)}{(n+1)^2}.
\end{equation}
Here, we use the convention that $\inf \emptyset =+\infty$.

\end{definition}

We note that by our definition of binding time, for $m< b_\mu(x,y)$,   $\xi_j(y)$ and $\xi_j(x)$ are both non-zero for $0\leq j< m$, we have  

\begin{equation}\label{eq:binding}
\frac{|\xi_m(x)-\xi_m(y)|}{|\xi_m(y)|}<\frac{\mu}{(m+1)^2}.
\end{equation}

\medskip

We fix a small constant $\mu_0\in (0,1)$ such that
\begin{equation}\label{eqn:mu_0}
\sum_{i=1}^{\infty} \frac{2\mu_0}{i^2}<\frac{1}{4d},
\end{equation}
and such that for any $t\in \mathbb{C}$ with $|t|<\mu_0$, we have 
\begin{equation}\label{eqn:mu0'}
|t^d-1|\le d|t-1|(1+d|t-1|).
\end{equation}

Let 
\begin{equation}\label{eqn:mu1}
\mu_1=\frac{\mu_0}{4}.
\end{equation}
Let us show that for every positive integer $n$,
\begin{align} \label{eqn:2.4}
|w_1-w_2|\le \frac{\mu_1\min (|w_1|, |w_2|)}{n^2},\,\, & |w_2-w_3|\le \frac{\mu_1\min (|w_2|, |w_3|)}{n^2} \notag\\
 & \implies \notag \\ 
|w_1-w_3|& \le \frac{\mu_0\min (|w_1|, |w_3|)}{n^2}.
\end{align}

Indeed, 
\begin{align*}
|w_1-w_3| & \le |w_1-w_2|+|w_2-w_3|\\
&\le \frac{\mu_1\min (|w_1|, |w_2|)}{n^2}+ \frac{\mu_1\min (|w_2|, |w_3|)}{n^2}
\\&\le \frac{2\mu_1|w_2|}{n^2}.
\end{align*}
On the other hand, we have $$|w_2|\leq |w_1|+|w_1-w_2|\leq |w_1|+\frac{\mu_1\min (|w_1|, |w_2|)}{n^2}\leq 2|w_1|.$$
Similarly,  $|w_2|\leq 2|w_3|$. Hence (\ref{eqn:2.4}) holds.

By (\ref{eqn:2.4}),  we have
\begin{equation}\label{eqn:mu0mu1}
b_{\mu_0}(x_1,x_3)\ge \min(b_{\mu_1}(x_1,x_2), b_{\mu_1}(x_2, x_3)).
\end{equation}

We note that (\ref{eqn:mu0mu1}) will only be used in the proof of Lemma \ref{lemma5}.

\par In the rest of the paper a $\mu_0-$binding time is called a binding time for simplicity.
\par Note that if $n$ is the $\mu$-binding time of $(x,y)$, where $0<\mu\leq \mu_0$, then

\begin{equation}\label{eq:binding2}
	\frac{1}{2}|\xi_j(y)|\le |\xi_j(x)|\le 2|\xi_j(y)| \text{ when } 0\le j< n.
\end{equation}

\medskip

The following two lemmas will be used frequently in the rest of the paper.
\begin{lemma}\label{*}
Let $x\in B(0,r_0)\times\mathbb{C}$ and $y\in B(0,r_0)\times\mathbb{C}$. Let $n=b_{\mu}(x,y)$, where $0<\mu\leq \mu_0$. Then for every positive integer $m\leq n$, we have
\begin{equation*}
\left|\frac{Df^m(x)(v)}{Df^m(y)(v)}-1\right|< \frac{1}{2}.
\end{equation*}
\end{lemma}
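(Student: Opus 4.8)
My plan is to reduce the statement to an elementary product estimate by exploiting the special form of $f$. Since $f(z,w)=(\lambda z,\,w^d+c(z))$, the differential $Df(z,w)$ is lower triangular with diagonal entries $\lambda$ and $dw^{d-1}$, so $Df(z,w)(0,1)=(0,\,dw^{d-1})$; in particular the vertical direction spanned by $v$ is invariant under $Df$. Iterating along the orbit of $x$ and using the chain rule gives
\[
Df^m(x)(v)=\Bigl(0,\ \prod_{j=0}^{m-1} d\,\xi_j(x)^{d-1}\Bigr),
\]
with $\xi_j(x)$ the $w$-coordinate of $f^j(x)$. Hence $Df^m(x)(v)$ and $Df^m(y)(v)$ are both scalar multiples of $v$, and (once the denominators are known to be nonzero) the $d^m$ factors cancel, leaving
\[
\frac{Df^m(x)(v)}{Df^m(y)(v)}=\prod_{j=0}^{m-1}\left(\frac{\xi_j(x)}{\xi_j(y)}\right)^{d-1}.
\]

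The second step is to control the factors along the binding interval. Put $n=b_\mu(x,y)$. If $\xi_j(y)=0$ for some $0\le j<n$, the inequality defining the binding time holds trivially at time $j$ (its right-hand side vanishes), forcing $b_\mu(x,y)\le j$, a contradiction; so $\xi_j(y)\neq 0$, and by the same argument $\xi_j(x)\neq 0$, for every $0\le j<n$. When $m\le n$, each index $j\in\{0,\dots,m-1\}$ satisfies $j<n$, so the product formula above is legitimate. Moreover, for such $j$ the definition of $b_\mu(x,y)$ directly gives
\[
\left|\frac{\xi_j(x)}{\xi_j(y)}-1\right|=\frac{|\xi_j(x)-\xi_j(y)|}{|\xi_j(y)|}\le\frac{|\xi_j(x)-\xi_j(y)|}{\min(|\xi_j(x)|,|\xi_j(y)|)}<\frac{\mu}{(j+1)^2}\le\frac{\mu_0}{(j+1)^2}.
\]

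Finally I would carry out the product estimate. Set $\epsilon_j=\mu_0/(j+1)^2$ and $a_j=(\xi_j(x)/\xi_j(y))^{d-1}$; from $|\xi_j(x)/\xi_j(y)|\le 1+\epsilon_j$ and the factorization $t^{d-1}-1=(t-1)(t^{d-2}+\cdots+1)$ one gets $|a_j-1|\le(1+\epsilon_j)^{d-1}-1$, hence $1+|a_j-1|\le(1+\epsilon_j)^{d-1}$. Combining this with the standard bound $\bigl|\prod_j a_j-1\bigr|\le\prod_j(1+|a_j-1|)-1$ (immediate by induction on the number of factors) and $1+\epsilon_j\le e^{\epsilon_j}$, we obtain for every $m\le n$
\[
\left|\frac{Df^m(x)(v)}{Df^m(y)(v)}-1\right|\le\prod_{j=0}^{m-1}(1+\epsilon_j)^{d-1}-1\le\exp\!\Bigl(d\sum_{i=1}^{\infty}\frac{\mu_0}{i^2}\Bigr)-1<e^{1/8}-1<\frac{1}{2},
\]
the penultimate inequality being exactly the normalization $\sum_{i\ge1}2\mu_0/i^2<1/(4d)$ imposed on $\mu_0$ (which gives $d\sum_{i\ge1}\mu_0/i^2<1/8$, using $d-1\le d$). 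This is the claimed estimate. I expect no genuine obstacle here: the only points requiring attention are the invariance and explicit iteration of the vertical direction, the non-vanishing of $\xi_j(y)$ (and $\xi_j(x)$) along the binding interval so that the ratio is meaningful, and the bookkeeping of the constant $d$ — which the choice of $\mu_0$ was designed precisely to absorb.
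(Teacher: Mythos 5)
Your proof is correct and follows essentially the same route as the paper's: write the ratio of vertical derivatives as $\prod_{j<m}(\xi_j(x)/\xi_j(y))^{d-1}$, bound each factor via the defining inequality of the binding time, and absorb the constant through the normalization $\sum_i 2\mu_0/i^2<1/(4d)$ (the paper does the last step with logarithms and $|e^s-1|\le 2|s|$ rather than your telescoping product bound, an immaterial difference). Your explicit check that $\xi_j(x),\xi_j(y)\neq 0$ for $j<n$ is a detail the paper leaves implicit, and it is a welcome addition.
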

\begin{proof}
For each $0\le i<n$, by (\ref{eq:binding}) and the discussion just above  (\ref{eq:binding}), we have $\xi_i(x)$ and $\xi_i(y)$ are both non-zero, and 
\begin{equation*}
\left|\log \frac{\xi_i(x)}{\xi_i(y)}\right|\le 2 \frac{|\xi_i(x)-\xi_i(y)|}{|\xi_i(y)|}\le \frac{2\mu}{(i+1)^2}.
\end{equation*}
Thus for $1\le m\le n$,
$$\left|\sum_{i=0}^{m-1} \log\frac{\xi_i(x)}{\xi_i(y)}\right|\le \sum_{i=0}^{m-1} \frac{2\mu}{(i+1)^2}<\frac{\mu_0\pi^2}{3}<\frac{1}{4d}.$$
Consequently,
$$\left|\frac{Df^m(x)(v)}{Df^m(y)(v)}-1\right|=\left|e^{(d-1)\sum_{i=0}^{m-1} \log \frac{\xi_i(x)}{\xi_i(y)}}-1\right|< 2(d-1) \left|\sum_{i=0}^{m-1} \log \frac{\xi_i(x)}{\xi_i(y)}\right|<\frac{1}{2}.$$
\end{proof}
\medskip
\par Let $x=(z_0,w_0)\in B(0,r_0)\times\mathbb{C}$, $y=(z,w)\in B(0,r_0)\times\mathbb{C}$ and $n\geq 1$. We define the following quantity
\begin{equation}\label{eqn:W}
W(x,y,n):=2|w_0-w|+\sum_{i=1}^{n}\frac{2|c(\lambda^{i-1}z_0)-c(\lambda^{i-1}z)|}{|Df^i(x)(v)|},
\end{equation}
where  $c(z)$ is the critical value curve of $f$ as in (\ref{uni}). By our assumption (\ref{1.2}), this implies
\begin{equation}\label{eqn:W00}
W((0,w),(z_0,w_0),n)\leq 2|w_0-w|+\sum_{i=1}^{n}\frac{4|\lambda^{k(i-1)}z_0^k|}{|Df_0^i(w)|}.
\end{equation}

\begin{lemma}\label{**}
Let $f(z,w)=(\lambda z, w^d+c(z))$ as in (\ref{uni}). Let $x=(z_0,w_0)\in B(0,r_0)\times\mathbb{C}$ and $y=(z,w)\in B(0,r_0)\times\mathbb{C}$. If $n$ is a positive integer and $n\leq b_{\mu}(x,y)$, where $0<\mu\leq \mu_0$, then
$$|Df^n(x)(v)|\ge \frac{|\xi_n(x)-\xi_n(y)|}{W(x,y,n)}.$$
In particular if $n=b_{\mu}(x,y)$ then
$$ |Df^n(x)(v)|\geq \frac{\mu\min(|\xi_n(x)|, |\xi_n(y)|)}{ (n+1)^2 W(x,y,n) }.$$ 
\end{lemma}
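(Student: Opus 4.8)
The plan is to track how the distance between the orbits of $x$ and $y$ in the $w$-coordinate evolves, and to compare it with the derivative $|Df^i(x)(v)|$. First I would set $u_i := \xi_i(x) - \xi_i(y)$ and write down the recursion coming from $f$: since $\xi_{i+1}(x) = \xi_i(x)^d + c(\lambda^i z_0)$ and similarly for $y$, we get
$$
u_{i+1} = \xi_i(x)^d - \xi_i(y)^d + \big(c(\lambda^i z_0) - c(\lambda^i z)\big).
$$
The first term factors as $u_i \cdot \big(\sum_{j} \xi_i(x)^j \xi_i(y)^{d-1-j}\big)$, and on the range $0\le i < n \le b_\mu(x,y)$ we know $\tfrac12|\xi_i(y)|\le |\xi_i(x)|\le 2|\xi_i(y)|$, so this multiplier is comparable to $d\,\xi_i(x)^{d-1} = Df_0'$-type factor; more precisely its ratio to $d\xi_i(x)^{d-1}$ is close to $1$ (here one uses the same kind of estimate as in Lemma~\ref{*}, controlling $\sum \log(\xi_i(x)/\xi_i(y))$ by $\mu_0\pi^2/3 < 1/(4d)$). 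Dividing the recursion by $Df^{i+1}(x)(v)$ and telescoping, one obtains
$$
\frac{u_n}{Df^n(x)(v)} = \frac{u_0}{1}\cdot(\text{error factor}) + \sum_{i=1}^{n} \frac{c(\lambda^{i-1}z_0) - c(\lambda^{i-1}z)}{Df^{i}(x)(v)}\cdot(\text{error factor}),
$$
where each error factor is within a factor $2$ of $1$ by the binding condition. This is exactly why $W(x,y,n)$ is defined with the factors of $2$ and the sum $\sum_{i=1}^n 2|c(\lambda^{i-1}z_0) - c(\lambda^{i-1}z)|/|Df^i(x)(v)|$: taking absolute values and using the triangle inequality gives $|u_n| \le |Df^n(x)(v)|\cdot W(x,y,n)$, which rearranges to the claimed bound $|Df^n(x)(v)| \ge |u_n|/W(x,y,n)$.

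For the ``in particular'' clause, I would simply invoke the definition of the $\mu$-binding time: when $n = b_\mu(x,y)$ (and is finite), by minimality $n$ is the first index at which $|\xi_n(x)-\xi_n(y)| \ge \mu \min(|\xi_n(x)|,|\xi_n(y)|)/(n+1)^2$, so $|u_n| \ge \mu\min(|\xi_n(x)|,|\xi_n(y)|)/(n+1)^2$. Since $0\le n-1 < n = b_\mu(x,y)$ — wait, one must be slightly careful about whether the comparison $\tfrac12|\xi_{n-1}(y)| \le |\xi_{n-1}(x)|$ still holds, which it does because the relevant range is $0\le j<n$; from this and continuity/the recursion one gets $\min(|\xi_n(x)|,|\xi_n(y)|) \ge \tfrac12|\xi_n(x)|$ as well, or one argues directly that $\min(|\xi_n(x)|,|\xi_n(y)|)\ge |\xi_n(x)|/2$ since the two are still within a bounded ratio. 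Plugging $|u_n| \ge \mu |\xi_n(x)|/(2(n+1)^2)$ into the main inequality yields $|Df^n(x)(v)| \ge \mu|\xi_n(x)|/(2(n+1)^2 W(x,y,n))$.

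The main obstacle I anticipate is making the telescoping/error-factor bookkeeping fully rigorous: one needs a clean induction showing that the product of the multipliers $\sum_j \xi_i(x)^j\xi_i(y)^{d-1-j}$ over $i$, divided by $\prod_i d\xi_i(x)^{d-1} = Df^n(x)(v)$, stays within a factor close to $1$, and then that the accumulated errors in the telescoped sum are all absorbed by the factor-$2$ slack built into the definition of $W$. This is where the precise choice of $\mu_0$ (with $\sum 2\mu_0/i^2 < 1/(4d)$) is used, exactly as in the proof of Lemma~\ref{*}; the estimate $|e^t - 1| < 2|t|$ for $|t|$ small converts the logarithmic bound into the multiplicative factor-$2$ bound. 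Everything else is routine manipulation of the recursion and the triangle inequality.
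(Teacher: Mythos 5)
Your argument is correct and is essentially the paper's own proof: the same recursion for $\xi_m(x)-\xi_m(y)$, divided by $|Df^m(x)(v)|$ and telescoped, with the $(1+2\mu/m^2)$-type multipliers absorbed into the factor $2$ built into the definition of $W(x,y,n)$, and the final clause obtained from the defining inequality at $n=b_\mu(x,y)$. One small remark: at $j=n$ the ratio bound $\tfrac12|\xi_n(y)|\le|\xi_n(x)|\le 2|\xi_n(y)|$ is no longer guaranteed (the binding condition has just failed there), but the inequality $|\xi_n(x)-\xi_n(y)|\ge \mu|\xi_n(x)|/\bigl(2(n+1)^2\bigr)$ you need still holds, because if $|\xi_n(y)|<|\xi_n(x)|/2$ then $|\xi_n(x)-\xi_n(y)|>|\xi_n(x)|/2$ outright.
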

\begin{proof}
First, for every $1\leq m\leq n$, we have
\begin{align}\label{2.2}
 |\xi_m(x)-\xi_m(y)|&=|(\xi_{m-1}(x))^d-(\xi_{m-1}(y))^d+c(\lambda^{m-1}z_0)-c(\lambda^{m-1}z)| \notag \\
 &\leq |(\xi_{m-1}(x))^d-(\xi_{m-1}(y))^d|+|c(\lambda^{m-1}z_0)-c(\lambda^{m-1}z)|
\end{align}
\medskip
\par We now estimate the first term in (\ref{2.2}). Since 
$$\left|\frac{\xi_{m-1}(y)}{\xi_{m-1}(x)}-1\right|\le \frac{\mu}{m^2}\le \mu_0,$$
by (\ref{eqn:mu0'}), we have 
\begin{multline*}
\frac{|\xi_{m-1}(x)^d-\xi_{m-1}(y)^d|}{|\xi_{m-1}(x)^d|}
\le d \frac{|\xi_{m-1}(x)-\xi_{m-1}(y)|}{|\xi_{m-1}(x)|}\left(1+d \frac{|\xi_{m-1}(x)-\xi_{m-1}(y)|}{|\xi_{m-1}(x)|}\right)
\\
\le d \frac{|\xi_{m-1}(x)-\xi_{m-1}(y)|}{|\xi_{m-1}(x)|}\left(1+\frac{d\mu}{m^2}\right),
\end{multline*}
and hence
$$|\xi_{m-1}(x)^d-\xi_{m-1}(y)^d| \le d|\xi_{m-1}(x)|^{d-1} |\xi_{m-1}(x)-\xi_{m-1}(y)|\left(1+\frac{d\mu}{ m^2}\right).$$

%
Therefore for every $1\leq m\leq n$, we have
\begin{equation*}
\frac{|\xi_m(x)-\xi_m(y)|}{|Df^m(x)(v)|}\leq \left(1+\frac{d\mu}{m^2}\right)\frac{|\xi_{m-1}(x)-\xi_{m-1}(y)|}{|Df^{m-1}(x)(v)|}+\frac{|c(\lambda^{m-1}z_0)-c(\lambda^{m-1}z)|}{|Df^m(x)(v)|}.
\end{equation*}
\medskip
\par Next we estimate $\frac{|\xi_{m-1}(x)-\xi_{m-1}(y)|}{|Df^{m-1}(x)(v)|}$ by the same method. Continue doing this process we finally have
\begin{equation*}
\frac{|\xi_n(x)-\xi_n(y)|}{|Df^n(x)(v)|}\leq 2|w_0-w|+\sum_{i=1}^{n}\frac{2|c(\lambda^{i-1}z_0)-c(\lambda^{i-1}z)|}{|Df^i(x)(v)|}=W(x,y,n),
\end{equation*}
using the fact that $\prod_{m=1}^\infty (1+\frac{d\mu}{m^2})\le \prod_{m=1}^\infty (1+\frac{d\mu_0}{m^2})\le e^{1/8}< 2$.

Hence we have 
\begin{equation*}
|Df^n(x)(v)|\geq \frac{|\xi_n(x)-\xi_n(y)|}{W(x,y,n)}.
\end{equation*}
In particular by (\ref{eqn:binding0}),  when $n=b_\mu(x,y)$ we have
$$|Df^n(x)(v)| \geq \frac{\mu\min (|\xi_n(x)|, |\xi_n(y)|)}{ (n+1)^2 W(x,y,n) }.$$
\end{proof}

\par The following lemma will be used to bound from below the vertical derivatives for an orbit starting close to the critical value line, and will be used in Section 4 and Section 5. 
\begin{lemma}\label{lem:bdcv}
Assume that $f_0$ has no attracting nor superattracting cycle in $\mathbb{C}$. Fix $0<\mu\leq \mu_0$. For any positive integer $s_0$ and $\lambda_0\in (0,1)$ there exists $\delta_0>0$ such that the following holds. Let $(z_j,w_j)_{j=0}^\infty$ be an $f$-orbit that satisfies:
\begin{equation}\label{eqn:dfdelta}
	\delta:=\max(|w_1-c(0)|, |z_0|^k|)^{1/d}\in (0,\delta_0).
\end{equation}
	Then there exists a positive integer $n$ 
	such that the following hold:
\begin{enumerate}
\item $s_0\le n\le b_\mu((z_1,w_1), (0, c(0)) $,
\item	$|w_j|\ge \delta/2, \,\, 1\le j\le n,$
\item
	$|Df^n(z_1,w_1)(v)|\ge \lambda_0^{n} \delta^{-(d-1)}.$
\end{enumerate}
\end{lemma}

\begin{proof}   	

Since $f$ is Lipschitz, we have $m=b_\mu ((z_1,w_1), (0, c(0)))\to +\infty$ when $\delta\to 0$. This implies that  $ m=b_\mu ((z_1,w_1), (0, c(0)))$ can be made arbitrarily large, if $\delta_0>0$  is sufficiently small.

Fix a constant $\lambda_1\in (0,1)$ such that $\lambda_1\ge \max(\lambda_0, |\lambda|^k)$. 
Let $s$ be a large positive integer such that $s\ge s_0$, $\lambda_0^{-s/2}>2d$ and 
\begin{equation}\label{eqn:mlarge}
\frac{\mu}{2C(p+1)^2}\geq \lambda_0^p
\end{equation}
for each $p\ge s$, where 
\begin{equation}\label{eqn:constantCbdcv}
C=2+4\sum_{i=1}^m \frac{|\lambda|^{ki}}{|Df^i(c_0)|}
\end{equation}
is a positive constant. 
Let $\delta_0>0$ be a small constant such that the following hold:
\begin{itemize}
\item $|f_0^j(0)|> \delta_0$ holds for all $1\le j\le s$;
\item $b_\mu((z_1,w_1), (0, c(0))\ge s$ holds when $\delta\in (0,\delta_0)$, where $\delta$ is as in (\ref{eqn:dfdelta}).   
\end{itemize}

Now assume $\delta\in (0,\delta_0)$ and let $m=b_\mu ((z_1,w_1), (0, c(0)))\in \mathbb{N}\cup\{\infty\}$.
We distinguish two cases. 
	
{\em Case 1.} There exists a minimal integer $0\le n\le m$ such that $|f_0^{n+1}(0)|\le \delta$. 
Then $n\ge s\ge s_0$ and $\lambda_0^{-n/2}> 2d$. By (\ref{eq:binding2}), we have  $|w_j|\ge |f^j(0)|/2>\delta/2$ for all $1\le j\le n$. By Lemma~\ref{*}, 
$|Df^n(z_1,w_1)|\ge |Df_0^n(c(0))|/2,$
so it suffices to show 
\begin{equation}\label{eqn:df0n}
|Df_0^n(c(0))|\ge \delta^{-(d-1)} d^{-1}\lambda_0^{n/2}.
\end{equation}
To this end, note that for any $t\in \mathbb{C}$ with $|t|$ very close to $0$, there is $\delta'$ slightly larger than $\delta$, such that 
$|t|<\delta'$, $|f_0^j(t)|>\delta'$ for $1\le j\le n$ and $|t|<|f_0^{n+1}(t)|<\delta'$.  By Proposition~\ref{prop:1-dim-exp}(ii),  we have 
$$|Df_0^{n+1}(t)|\ge \lambda_0^{n/2}(|t|/|f_0^{n+1}(t)|)^{d-1},$$
which implies that 
  $$|Df_0^n(f_0(t))|=\frac{|Df_0^{n+1}(t)|}{|Df_0(t)|}\ge \frac{1}{d}\lambda_0^{n/2} \frac{1}{|f_0^{n+1}(t)|^{d-1}}.$$
Letting $t\to 0$, we obtain (\ref{eqn:df0n}). 

	
	
{\em Case 2.} For each $0\le j\le m$, $|f_0^{j+1}(0)|> \delta$. Then by (\ref{eq:binding2}), $|w_j|\ge |f_0^j(0)|>\delta/2$ for all $1\le j\le m$. 
By Proposition~\ref{prop:levin}, there exists $C_1>0$ such that 
\begin{equation}\label{eqn:Df0c0}
|Df_0^i(c(0))|\ge C_1\lambda_1^i\mbox{ for all }i\ge 1.
\end{equation}

If $m=\infty$, then it suffices to take $n$ to be a large integer such that $n>s_0$ and $C_1\lambda_1^n> 2\delta^{-(d-1)}\lambda_0^n$, since $|Df^n(z_1,w_1)|\ge |Df_0^n(c(0))|/2$.
In the following, we assume $m<\infty$ and take $n=m$. Then $n=m\ge s\ge s_0$, so the property (1) holds.   
As we noted above, the property (2) also holds. Let us prove the property (3). 
By (\ref{eqn:W00}) and (\ref{eqn:dfdelta}), 
\begin{multline*}
W:=W((0,c(0)),(z_1,w_1), m)\le 2|w_1-c(0)|+\sum_{i=1}^m \frac{4|\lambda|^{ki}|z_0|^k}{|Df^i(c(0))|} 
\le C\delta^d,
\end{multline*}
where $C$ is as in (\ref{eqn:constantCbdcv}). 
By Lemma \ref{**},   $$|Df^m(z_1,w_1)(v)|\ge \frac{\mu}{2(m+1)^2} \frac{|f_0^{m+1}(0)|}{W}\ge \frac{\mu}{2(m+1)^2 C}\delta^{-(d-1)},$$
which implies the property (3) by (\ref{eqn:mlarge}) since $m\ge s$. 
%
\end{proof}
%
\subsection{A variant of Przytycki's lemma}
We shall also frequently use the following slight generalization of a lemma due to Przytycki \cite[Lemma 1]{przytycki1993lyapunov}. In the following, the Euclidean norm in $\mathbb{C}^2$ is denoted by $||\cdot||$.
\begin{lemma}\label{p}
Let $f(z,w)=(\lambda z, w^d+c(z))$ as in (\ref{uni}). Assume that $f_0$ has no attracting nor superattracting cycle in $\mathbb{C}$. Then there exist a constant $ C>0 $ such that for every $\varepsilon>0$ and $n\geq 1$,  if $x=(z_0, w_0)\in B(0,r_0)\times\mathbb{C}$ satisfies $|z_0|^k\leq \varepsilon$, $|w_0|\leq \varepsilon$ and $|\xi_n(x)|\leq \varepsilon$, then $n\geq C\log\frac{1}{\varepsilon}$.
\end{lemma}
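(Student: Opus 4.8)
The plan is to reduce the statement to a one‑dimensional Przytycki‑type estimate for $f_0$ and to compare the orbit of $x$ with the critical orbit of $f_0$ by means of the binding‑time machinery of Lemmas \ref{*} and \ref{**}. Since $n\ge 1$, it is enough to prove the bound for all $\varepsilon$ below some small constant $\varepsilon_1$ (for $\varepsilon\ge\varepsilon_1$ one simply shrinks $C$). The one‑dimensional input is extracted from Proposition \ref{prop:levin}(1): because $f_0$ has no attracting cycle, $(0,0)$ is not periodic, so $f_0^{j}(0)\neq 0$ for every $j\ge 1$; fixing once and for all some $\lambda_0\in(|\lambda|^{k},1)$ and applying Proposition \ref{prop:levin}(1) with $\eta=\log(1/\lambda_0)$ gives $|Df_0^{i}(c(0))|\ge\kappa\lambda_0^{i}$ for all $i\ge 1$, with $\kappa>0$. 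Since $|Df_0^{i}(c(0))|=d^{i}\prod_{j=1}^{i}|f_0^{j}(0)|^{d-1}$, this yields $\prod_{j=1}^{i}|f_0^{j}(0)|\ge c'\rho^{i}$ with fixed $\rho\in(0,1)$, $c'>0$; dividing consecutive products (the case of an escaping critical orbit being immediate, since then $|f_0^{n}(0)|\to\infty$) one obtains $|f_0^{n}(0)|\ge e^{-C_0 n}$ for all $n\ge 1$.

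Now set $y=(0,c(0))$, so that $\xi_j(y)=f_0^{\,j+1}(0)$, and write $w_j=\xi_j(x)$. By \eqref{eqn:cz-c0} and the hypotheses, $|w_1-c(0)|\le|w_0|^d+2|z_0|^k\le 3\varepsilon$, so $f(x)$ lies $3\varepsilon$‑close to $y$ and the binding time $b:=b_{\mu_0}(f(x),y)$ is at least $1$. Using Lemma \ref{*}, the inequality $|c(\lambda^{i}z_0)-c(0)|\le 2|\lambda|^{ik}|z_0|^k\le 2|\lambda|^{ik}\varepsilon$, the lower bound $|Df_0^{i}(c(0))|\ge\kappa\lambda_0^{i}$, and $\lambda_0>|\lambda|^{k}$ (so that $\sum_i(|\lambda|^{k}/\lambda_0)^{i}<\infty$), one bounds $W(f(x),y,m)\le C_2\varepsilon$ for every $m\le b$. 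By Lemma \ref{**} this gives $|\xi_m(f(x))-\xi_m(y)|\le C_2\varepsilon\,|Df^{m}(f(x))(v)|$ for $1\le m\le b$; inserting $m=b$, using the defining inequality of $b_{\mu_0}$, the bound $|Df^{b}(f(x))(v)|\le\tfrac32|Df_0^{b}(c(0))|$ from Lemma \ref{*}, and $|\xi_b(y)|\le|\xi_b(f(x))|+|\xi_b(f(x))-\xi_b(y)|$, one obtains
\begin{equation*}
|f_0^{\,b+1}(0)|=|\xi_b(y)|\le C_3\,(b+1)^{2}\,\varepsilon\,|Df_0^{b}(c(0))|.
\end{equation*}
If the critical orbit of $f_0$ is bounded by $R$, then $|Df_0^{b}(c(0))|\le(dR^{d-1})^{b}$, and the last display together with $|f_0^{\,b+1}(0)|\ge e^{-C_0(b+1)}$ forces $b\ge C_1\log(1/\varepsilon)$. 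It remains to relate $n$ to $b$: if $n-1\ge b$ we are done; if $n-1<b$, the distortion estimate recorded just after the definition of the binding time gives $|w_n|=|\xi_{n-1}(f(x))|\ge\tfrac12|\xi_{n-1}(y)|=\tfrac12|f_0^{n}(0)|$, whence $|f_0^{n}(0)|\le 2\varepsilon$ and the one‑dimensional bound gives $n\ge\tfrac1{C_0}\log\tfrac1{2\varepsilon}$. In both cases $n\ge C\log(1/\varepsilon)$ for a suitable $C$.

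The main obstacle is the step deriving the exponential lower bound on $b$: from the single inequality encoded in the definition of $b_{\mu_0}$ one must extract genuinely exponential control, which needs at the same time the summability of the series appearing in $W$ (hence $|Df_0^{i}(c(0))|\gtrsim\lambda_0^{i}$ with $\lambda_0>|\lambda|^{k}$) and the a priori bound $|f_0^{n}(0)|\gtrsim e^{-C_0 n}$ on the critical orbit. Moreover, when the critical orbit of $f_0$ escapes to infinity the estimate $|Df_0^{b}(c(0))|\le(dR^{d-1})^{b}$ fails and the argument above breaks down; but that case is easier, since then any $f$‑orbit starting $\varepsilon$‑close to $(0,0)$ escapes after a number of steps bounded independently of $\varepsilon$, which at once caps $n$ and bounds $\varepsilon$ from below.
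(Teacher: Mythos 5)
Your proof is correct, but it takes a genuinely different route from the paper's. The paper's argument is much shorter and more elementary in its two--dimensional part: it compares $\xi_n(x)$ directly with the one--dimensional orbit $f_0^n(w_0)$ of the same initial fibre coordinate via a crude Gronwall-type bound of order $M^{n}\varepsilon$ (with $M$ a bound on the derivative on a compact set containing the orbit), and then splits into two cases: either $M^{n}\ge 1/\varepsilon$, which already yields $n\gtrsim\log(1/\varepsilon)$, or else $|f_0^n(w_0)|\le 3\varepsilon$ while $|w_0|\le\varepsilon$, and the cited one--dimensional lemma of Przytycki \cite[Lemma 1]{przytycki1993lyapunov}, applied to the arbitrary point $w_0$ near the critical point of $f_0$, finishes. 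You instead push $x$ forward one step so that $w_1$ lands $3\varepsilon$-close to the critical value, compare with the orbit of $(0,c(0))$ through the binding-time machinery of Lemmas \ref{*} and \ref{**}, and replace the citation of Przytycki's lemma by the lower bound $|f_0^n(0)|\ge e^{-C_0 n}$ on the critical orbit alone, which you correctly extract from Proposition \ref{prop:levin}(1) (using boundedness of the critical orbit to pass from the partial products $\prod_{j\le i}|f_0^j(0)|$ to their individual factors, and, crucially, the same boundedness to get the upper bound $|Df_0^{b}(c(0))|\le (dR^{d-1})^{b}$ in the case $n-1\ge b$). What your route buys is that the only one--dimensional recurrence input is for the critical orbit itself, transported to arbitrary orbits by binding, in the spirit of the rest of the paper; the cost is reliance on the deeper Lyapunov-exponent result of \cite{levin2016lyapunov} and a separate treatment of the escaping-critical-orbit case, which you dispose of correctly (the hypotheses then bound $\varepsilon$ from below, so the conclusion is trivial). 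In a final write-up you should make explicit that $b\ge 1$ for small $\varepsilon$ (this uses $c(0)\neq 0$, which holds since otherwise $0$ would be a superattracting fixed point of $f_0$) and that Lemma \ref{**} is only invoked at $m=b$ in the case $n-1\ge b$, where $b<\infty$ is automatic.
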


Recall that the constant $k$ was defined in (\ref{1.2}).
\begin{proof}
It is sufficient to prove the result when $\varepsilon$ is sufficiently small. By \cite[Lemma 1]{przytycki1993lyapunov}, there are constants $\varepsilon_0\in (0,1)$ and $C_0>0$ such that if $w_0\in\mathbb{C}$ and $n\ge 1$ satisfies $|w_0|<\varepsilon$ and $|f_0^n(w_0)|<\varepsilon$ for some $\varepsilon\in (0,\varepsilon_0)$, then $n\ge C_0\log \frac{1}{\varepsilon}$. 


Let $R>0$ be a constant such that for any $y=(z,w)$ with $|z|<r_0$ and $|w|>R$, we have $|\xi_1(y)|>2|w|$.  Thus $|\xi_n(y)|\leq R$ implies $|\xi_m(y)|\leq R$ for every $0\leq m\leq n$. Let $M:=\max(d R^{d-1},2)$. 

Now consider $x=(z_0,w_0)$ with $|z_0|^k\le \varepsilon$, $|w_0|<\varepsilon$ and $|\xi_n(x)|\le \varepsilon$ for $\varepsilon<\min (\varepsilon_0^2/9, R)$. by the choice of $R$, for each $0\le m<n$, $|\xi_m(x)|\le R$. Let $n_1$ be the maximal positive integer such that $n_1\le n$ and such that $|f_0^m(w_0)|<R$ for all $0\le m<n_1$.  Then,  for each $0\le m<n_1$, 
\begin{multline*}
|\xi_{m+1}(x)-f_0^{m+1}(w_0)|=|\xi_m(x)^d-f_0^m(w_0)^d|+|c(z_m)-c(0)|\\
\leq M |\xi_m(x)-f_0^m(w_0)|+2|z_m|^k\leq M|\xi_m(x)-f_0^m(w_0)|+2\varepsilon,
\end{multline*} 
hence
\begin{equation}\label{a}
|\xi_{n_1}(x)-f_0^{n_1}(w_0)|\leq 2M^{n} \varepsilon.
\end{equation}
Now if $M^{n_1}\geq  \sqrt{\frac{1}{\varepsilon}}$ we get the desired estimate for $C=1/2\log M$. If $M^{n}\leq  \sqrt{\frac{1}{\varepsilon}}$, then by (\ref{a}) we have 
$|f_0^{n_1}(w_0)|\leq |\xi_{n_1}(x)|+2M^{n_1} \varepsilon\leq 3\sqrt{\varepsilon}$. Thus we have $|w_0|\leq \varepsilon\le 3\sqrt{\varepsilon}$ and $|f_0^n(w_0)|\leq 3\sqrt{\varepsilon}$, so
$$n\ge n_1\ge C_0\log \frac{1}{3\sqrt{\varepsilon}},$$
which implies the desired estimate for a suitably chosen $C$.
\end{proof}

\bigskip
\section{Lower bounds of derivatives along the orbits}
In this section we prove Theorem \ref{thm:derivatives} which is a perturbed version of Proposition~\ref{prop:1-dim-exp}. To obtain the desired lower bounds for the vertical derivatives $|Df^n(z_0,w_0)(0,1)|$, we decompose the orbit $(z_j,w_j)$ into sub-orbits falling into the following two cases, and do estimates separately:
\begin{itemize}
\item Orbits staying bounded away from the critical set, i.e. the case $\inf_{j=0}^{n-1} |w_j|$ is bounded away from zero. 
\item Orbits corresponds to returns to a small neighborhood of the critical set, i.e. for some small $\delta>0$, $|w_0|, |w_n|\le \delta,$ but $|w_j|>|w_n|$ for $1\le j<n$. 
\end{itemize}

The first case follows from the corresponding non-perturbed result (Proposition~\ref{prop:1-dim-exp} (i) and (iii)) by continuity argument, see Lemmas~\ref{2-dim-(i)} and ~\ref{lem:kappa0}. 
The second case is more delicate, and is derived from Proposition~\ref{prop:1-dim-exp} (ii), using binding argument, see Lemma~\ref{lem:freturn}. The proof of Theorem~\ref{thm:derivatives} is given at the end of this section. 



\medskip
\par 

We start with the lemmas dealing with orbits staying away from the critical sets which follow from Proposition~\ref{prop:1-dim-exp} by a continuity argument.
\begin{lemma}\label{2-dim-(i)}
Let $f(z,w)=(\lambda z, w^d+c(z))$ as in (\ref{uni}) such that $f_0$ has no attracting nor superattracting cycle in $\mathbb{C}$. Given $0<\lambda_0<1$ and $0<\delta<1$ there exists $\kappa=\kappa(\lambda_0,\delta)>0$ and $\eta_1=\eta_1(\lambda_0, \delta)>0$ such that for any $f$-orbit $\{x_j=(z_j,w_j)\}_{i=0}^n$, if $|w_j|\geq \delta$ hold for all $0\le j< n$ and $|z_0|<\eta_1$, then
\begin{equation*}
    |Df^n(x_0)(v)|\geq \kappa\lambda_0^n.
\end{equation*}
\end{lemma}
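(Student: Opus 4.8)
\smallskip
The plan is to deduce the estimate from Proposition~\ref{prop:1-dim-exp}(i) by a perturbative argument, the key structural point being that the vertical derivative factorizes, $|Df^n(x_0)(v)|=\prod_{j=0}^{n-1}d|w_j|^{d-1}$, so it is multiplicative over consecutive time-blocks; hence it will suffice to prove the estimate over blocks of one fixed, large length $N$ and then multiply. I first record two reductions. Fix $R\ge 1$ so large that $|z|<r_0$ and $|w|\ge R$ imply $|w^d+c(z)|>\max(2|w|,R)$. If $|w_j|>R$ for some $j<n$ and $j_0$ is the least such index, then $|w_j|>R$ for all $j_0\le j<n$, so $\prod_{j_0\le j<n}d|w_j|^{d-1}\ge (dR^{d-1})^{n-j_0}\ge\lambda_0^{n-j_0}$, and it remains to bound $\prod_{j<j_0}d|w_j|^{d-1}$; thus we may and do assume in addition that $|w_j|\le R$ for all $0\le j<n$. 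Also, if $d\delta^{d-1}\ge\lambda_0$ the estimate is trivial with $\kappa=1$, so we may assume $\delta$ small.

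The heart of the matter is the following bounded-time claim: there exist $N=N(\lambda_0,\delta)\in\mathbb N$ and $\eta_1=\eta_1(\lambda_0,\delta)>0$ such that every $f$-orbit $\{(z_j,w_j)\}_{j=0}^N$ with $|z_0|<\eta_1$ and $\delta\le|w_j|\le R$ for $0\le j<N$ satisfies $|Df^N(z_0,w_0)(v)|\ge\lambda_0^N$. To prove it I would fix $\lambda_0'\in(\lambda_0,1)$, let $\kappa'=\kappa(\lambda_0',\delta/2)$ be the constant from Proposition~\ref{prop:1-dim-exp}(i), and choose $N$ so large that $\kappa'(\lambda_0'/\lambda_0)^N\ge 2$. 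When $z_0=0$ the orbit is the $f_0$-orbit of $w_0$, so $|Df^N(0,w_0)(v)|=|Df_0^N(w_0)|\ge\kappa'(\lambda_0')^N\ge 2\lambda_0^N$ by Proposition~\ref{prop:1-dim-exp}(i). For $0<|z_0|<\eta_1$, the identity $|w_{j+1}-f_0(w_j)|=|c(z_j)-c(0)|\le 2|z_0|^k$ together with a routine telescoping (discrete Gronwall) estimate, bootstrapped on the compact annulus $\{\delta/2\le|w|\le R+1\}$, shows that once $\eta_1$ is small enough depending on $N,R,\delta$ the orbit $(w_j)_{0\le j\le N}$ stays within $\delta/2$ of the $f_0$-orbit of $w_0$; in particular $|f_0^j(w_0)|\ge\delta/2$ for $j<N$, so $|Df_0^N(w_0)|\ge 2\lambda_0^N$ still holds, and comparing $\prod_{j<N}d|w_j|^{d-1}$ with $\prod_{j<N}d|f_0^j(w_0)|^{d-1}$ factor by factor yields $|Df^N(z_0,w_0)(v)|\ge\tfrac12|Df_0^N(w_0)|\ge\lambda_0^N$ after shrinking $\eta_1$ once more. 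All the constants involved are uniform over $w_0$ in the compact annulus $\{\delta\le|w|\le R\}$, so a single $\eta_1$ works.

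Finally I would chain the claim. Given a (non-escaping) orbit as above, write $n=qN+r$ with $0\le r<N$. For $1\le i\le q$ the sub-orbit based at $(z_{(i-1)N},w_{(i-1)N})$ has first coordinate of modulus $|\lambda|^{(i-1)N}|z_0|\le|z_0|<\eta_1$ and stays in $\{\delta\le|w|\le R\}$ for the next $N$ steps (all indices occurring are $<n$), so the claim gives $|Df^N(z_{(i-1)N},w_{(i-1)N})(v)|\ge\lambda_0^N$; the remaining $r<N$ factors are each $\ge d\delta^{d-1}$, so their product is $\ge\kappa:=\min(1,(d\delta^{d-1})^{N-1})$. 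Multiplying over the blocks gives $|Df^n(x_0)(v)|\ge\kappa\lambda_0^n$, the case $n<N$ being the remainder estimate alone, and the escaping case reducing to this one as above. The only genuine obstacle is precisely what this block decomposition is designed to circumvent: a direct continuity/compactness argument controls a small perturbation of an $f_0$-orbit only over a time $\sim\log(1/\eta_1)$, since the error can grow geometrically, and so cannot produce a bound uniform in $n$ in one step; fixing the block length $N$ large enough to absorb the loss of replacing $(\lambda_0')^N$ by $\lambda_0^N$, and then iterating, is what repairs this.
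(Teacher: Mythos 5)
Your proposal is correct and follows essentially the same route as the paper: establish the estimate over a single block of fixed length $N$ (chosen so that the loss from replacing $\lambda_0'$, resp.\ $\lambda_0^{1/2}$ in the paper, by $\lambda_0$ is absorbed) via a continuity/perturbation comparison with the one-dimensional orbit and Proposition~\ref{prop:1-dim-exp}(i), then chain over consecutive blocks using the multiplicativity of the vertical derivative. Your explicit reduction to orbits with $|w_j|\le R$ and the telescoping control of $|w_j-f_0^j(w_0)|$ merely make precise the compactness underlying the paper's continuity argument, so no substantive difference.
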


\begin{proof} By Proposition~\ref{prop:1-dim-exp} (i), there exists $\kappa>0$ depending on $\lambda_0$ and $\delta$ such that for any $f_0$-orbit $(y_j)_{j=0}^n$ with $|y_j|\geq \delta/2$ for all $0\le j<n$, then
$$|Df_0^n(y_0)|\ge 2\kappa \lambda_0^{n/2}.$$
Choose a positive integer $N$ such that $\kappa^2> \lambda_0^N.$

By continuity, there exists $\eta_1>0$ such that for any orbit $\{x_i=(z_i, w_i)\}_{i=0}^{m-1}$, $m\le N$, with $\min_{j=0}^{m-1} |w_j|\geq \delta$ and $|z_0|<\eta_1$, then $\min_{j=0}^{m-1} |f_0^j(w_0)|>\delta/2$ and
$$|Df^m(x_0)(v)|\ge \frac{1}{2} |Df_0^m(w_0)|.$$ Since
$|Df_0^m(w_0)|> 2\kappa \lambda_0^{m/2},$
we conclude $$|Df^m(x_0)(v)|\ge \kappa \lambda_0^{m/2}.$$
In particular, when $m=N$, this gives us $$|Df^N(x_0)(v)|>\lambda_0^{N}.$$

Now let us consider an $f$-orbit $(x_i=(z_i,w_i))_{i=0}^n$ with $|w_i|\ge \delta$ for $0\leq i<n$. Let $n=qN+r$, where $q$ is a non-negative integer and $0\le r<N$. Then by decomposing this orbit into $q$ pieces of length $N$ together with one piece of length less than $r<N$, we obtain
$$|Df^n(x_0)(v)|\ge \kappa\lambda_0^{r/2}\lambda_0^{qN}\ge \kappa \lambda_0^n.$$
\end{proof}

A completely similar argument, using Proposition~\ref{prop:1-dim-exp} (iii) instead of (i), shows the following stronger result in the case $|w_n|$ is small. The difference between Lemma \ref{2-dim-(i)} and Lemma \ref{lem:kappa0} is that in Lemma \ref{lem:kappa0}, $\kappa_0$ does not  depend on $\delta$, but in Lemma  \ref{2-dim-(i)}, $\kappa$ depends on $\delta$.

\begin{lemma}\label{lem:kappa0}
Let $f(z,w)=(\lambda z, w^d+c(z))$ as in (\ref{uni}) such that $f_0$ has no attracting nor superattracting cycle in $\mathbb{C}$. Given $0<\lambda_0<1$, there exists $\kappa_0=\kappa_0(\lambda_0)>0$ such that for any $\delta>0$ there exists $\eta_2=\eta_2(\lambda_0,\delta)>0$ with the following property: If
$\{x_j=(z_j,w_j)\}_{j=0}^n$ is an $f$-orbit with $|w_j|\geq \delta$ hold for $0\le j< n$, $|w_n|\le \delta$ and $|z_0|<\eta_2$, then
\begin{equation*}
    |Df^n(x_0)(v)|\geq \kappa_0\lambda_0^n.
\end{equation*}
\end{lemma}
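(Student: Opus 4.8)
The plan is to mirror the proof of Lemma~\ref{2-dim-(i)}, replacing the use of Proposition~\ref{prop:1-dim-exp}(i) by Proposition~\ref{prop:1-dim-exp}(iii), and being careful about where the constant $\kappa_0$ comes from so that it does not depend on $\delta$. First I would invoke Proposition~\ref{prop:1-dim-exp}(iii) with the given $\lambda_0$ (or rather with $\sqrt{\lambda_0}$) to obtain a constant $\kappa_0=\kappa_0(\lambda_0)>0$ such that for any $f_0$-orbit $(w_j')_{j=0}^n$ with $|w_j'|\ge |w_n'|$ for all $0\le j<n$ one has $|Df_0^n(w_0')|\ge 2\kappa_0\lambda_0^{n/2}$; note this $\kappa_0$ is $\delta$-independent. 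Then pick a positive integer $N=N(\lambda_0)$ with $\kappa_0^2>\lambda_0^N$.

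Next, given $\delta>0$, I would use uniform continuity of $f$ and its vertical derivative on the compact region $\{|z|\le r_0/2,\ |w|\le R\}$ (with $R$ the escape radius as in Lemma~\ref{p}, so that orbits with a small coordinate at time $n$ stay in this region up to time $n$) to choose $\eta_2=\eta_2(\lambda_0,\delta)>0$ with the property that for any $f$-orbit $\{x_j=(z_j,w_j)\}_{j=0}^{m}$ of length $m\le N$ with $|w_j|\ge\delta$ for $0\le j<m$, $|w_m|\le\delta$ and $|z_0|<\eta_2$, the corresponding $f_0$-orbit started at $w_0$ stays $\delta/2$-close to $(w_j)$ (hence satisfies the ``$|w_j'|\ge$ last value'' hypothesis, up to a harmless factor) and $|Df^m(x_0)(v)|\ge \tfrac12|Df_0^m(w_0)|$. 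In particular for $m=N$ this yields $|Df^N(x_0)(v)|\ge \kappa_0\lambda_0^{N/2}>\lambda_0^N$. One subtlety here: unlike in Lemma~\ref{2-dim-(i)}, the hypothesis is about the value $|w_n|$ being small rather than bounded away from zero, so the continuity comparison with $f_0$ must be set up so that the monotonicity-type hypothesis of Proposition~\ref{prop:1-dim-exp}(iii) is preserved; this is exactly where I expect to spend the most care.

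Finally I would decompose a general orbit $(x_j)_{j=0}^n$ satisfying the hypotheses ($|w_j|\ge\delta$ for $0\le j<n$, $|w_n|\le\delta$, $|z_0|<\eta_2$) into blocks. Writing $n=qN+r$ with $0\le r<N$, the first $q$ blocks each have all vertical coordinates $\ge\delta$ and hence fall under the ``staying away'' estimate of Lemma~\ref{2-dim-(i)} — or, more directly, each length-$N$ block with interior values $\ge\delta$ and $z$-coordinate still small (which holds since $|z_j|=|\lambda|^j|z_0|<\eta_2$) contributes a factor $>\lambda_0^N$ — while the last block of length $r\le N$, which does contain the small value $|w_n|$ at its end, contributes a factor $\ge\kappa_0\lambda_0^{r/2}\ge\kappa_0\lambda_0^N$ by the $N$-step estimate just proved (applied to a block of length $r$; if $r=0$ there is nothing to do and one just uses $|w_n|\le\delta$ trivially). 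Multiplying, $|Df^n(x_0)(v)|\ge \kappa_0\lambda_0^{r/2}\lambda_0^{qN}\ge\kappa_0\lambda_0^n$, which is the claim. The only real obstacle, as noted, is making the continuity argument respect the one-sided hypothesis of part (iii); everything else is a routine adaptation of the preceding lemma.
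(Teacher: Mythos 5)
Your overall strategy is exactly the paper's: the $\delta$-independent constant $\kappa_0$ comes from Proposition~\ref{prop:1-dim-exp}(iii), short orbit segments ending at the small value are treated by a continuity comparison with $f_0$, and the long initial part is treated by the away-from-zero estimate of Lemma~\ref{2-dim-(i)} run at rate $\lambda_0^{1/2}$, which beats $\kappa_0\lambda_0^m$ once $m$ is large. (The paper organizes this as a dichotomy $n\le N_1(\lambda_0,\delta)$ versus $n>N_1(\lambda_0,\delta)$ rather than as a block decomposition, but the two are the same argument. One small bookkeeping point: your $N$ was fixed by $\kappa_0^2>\lambda_0^N$, which is the right condition for the terminal block, but for each full block to contribute a factor $>\lambda_0^N$ you need $\kappa(\lambda_0,\delta)>\lambda_0^{N/2}$ with $\kappa$ the $\delta$-dependent constant of Lemma~\ref{2-dim-(i)}; so $N$ must be enlarged to depend on $\delta$. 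This is harmless, since only $\eta_2$, not the final constant, is allowed to depend on $\delta$.)

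The step you defer — making the continuity comparison compatible with the one-sided hypothesis of (iii) — is the one genuine hole, and the parenthetical claim that a $\delta/2$-close shadowing orbit satisfies the hypothesis ``up to a harmless factor'' is not correct as stated: from $|f_0^j(w_0)|\ge\delta-\epsilon$ for $j<n$ and $|f_0^n(w_0)|\le\delta+\epsilon$ one cannot conclude $|f_0^j(w_0)|\ge|f_0^n(w_0)|$, and the obvious repair (apply (iii) up to the time of the minimum of the shadowing orbit and then part (i) on the remainder) reintroduces a $\delta$-dependent constant on a segment of possibly bounded length, which cannot be absorbed. The clean fix is not to ask the shadowing orbit to satisfy the hypothesis, but only to be close to a point that does. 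For fixed $n$ and $\delta$, the set $K=\{w:\ |f_0^j(w)|\ge\delta \text{ for } 0\le j<n,\ |f_0^n(w)|\le\delta\}$ is compact, and $|Df_0^n|\ge 2\kappa_0\lambda_0^{n/2}$ on $K$ by (iii); by continuity of $Df_0^n$ there is an open $U\supset K$ on which $|Df_0^n|\ge\kappa_0\lambda_0^{n/2}$. The sets $A_\epsilon=\{w:\ |f_0^j(w)|\ge\delta-\epsilon \text{ for } j<n,\ |f_0^n(w)|\le\delta+\epsilon\}$ are compact and decrease to $K$ as $\epsilon\downarrow 0$, hence $A_\epsilon\subset U$ for $\epsilon$ small. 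Since for fixed $n$ the initial point $w_0$ of an admissible $f$-orbit with $|z_0|<\eta$ lies in $A_{\epsilon(\eta)}$ with $\epsilon(\eta)\to0$, we get $|Df^n(x_0)(v)|\ge\tfrac12|Df_0^n(w_0)|\ge\tfrac12\kappa_0\lambda_0^{n/2}$ for $|z_0|<\eta'_n(\delta)$, and $\eta_2=\min_{n\le N}\eta'_n$ finishes the terminal block. With this inserted, your proof is complete and coincides with the paper's.
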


\begin{proof} We just specify the choice of $\kappa_0$. By Proposition~\ref{prop:1-dim-exp} (iii), there exists $\kappa_0\in (0,1)$ depending on $\lambda_0$ such that for any $f_0$-orbit $(\widetilde{w}_j)_{j=0}^n$, with $|\widetilde{w}_j|\geq |\widetilde{w}_n|$ for all $0\le j<n$, then
$$|Df_0^n(\widetilde{w}_0)|\ge 2\kappa_0 \lambda_0^{n/2}.$$

Arguing in the same way as the previous proof by a continuity argument, we show that for any $n$, there exists $\eta'_n=\eta'_n(\delta)$ such that if $|z_0|<\eta'_n$ and $|w_j|\ge \delta$ for $0\le j<n$ and $|w_n|<\delta$, then \begin{equation}\label{3.3-1}
|Df^n(x_0)(v)|\ge \kappa_0 \lambda_0^n.
\end{equation}
\par On the other hand, by Lemma \ref{2-dim-(i)}, there exists $\kappa=\kappa(\lambda_0,\delta)>0$ and $\eta_1=\eta_1(\lambda,\delta)>0$ such that $$|Df^n(x_0)(v)|\ge \kappa \lambda_0^{n/2},$$ 
provided that $|z_0|<\eta_1$. So there exists $N_1=N_1(\lambda_0,\delta)>0$ such that 
\begin{equation}\label{3.3-2}
	|Df^n(x_0)(v)|>\kappa_0 \lambda_0^n 
\end{equation}provided that $n>N_1$ and $|z_0|<\eta_1$. 
\par Combine the estimates (\ref{3.3-1})  and (\ref{3.3-2}), the lemma holds with $\eta_2:=\min( \eta_1, \eta_1', \cdots, \eta_{N_1}')$, where $(\eta'_n)_{1\leq n\leq N_1}$ are constants appearing before (\ref{3.3-1}).
\end{proof}

We shall use the binding argument to prove a perturbed version of Proposition~\ref{prop:1-dim-exp} (ii). Let us first state the following one-dimensional slightly generalized form of the statement.

\begin{lemma}\label{lem:irret} Given $\lambda_0\in (0,1)$, there exists $\delta_0>0$ such that for any $\delta\in (0,\delta_0)$,
if $(w_i)_{i=0}^n$ is an $f_0$-orbit with $|w_0|< 2\delta, |w_n|< 2\delta$ and $|w_j|> \delta/2$ for all $1\le j<n$, then
$$|Df_0^n(w_0)|\ge \lambda_0^{n}\min\left(1, (|w_0|/|w_n|)^{d-1}\right).$$
\end{lemma}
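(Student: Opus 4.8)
The plan is to reduce this statement to the cases already contained in Proposition~\ref{prop:1-dim-exp}(ii). Recall that part (ii) of that proposition handles an orbit which starts inside $B(0,\delta)$, stays outside $B(0,\delta)$ for intermediate times, and returns to $B(0,\delta)$; here the hypotheses are slightly relaxed to $|w_0|<2\delta$, $|w_n|<2\delta$, and $|w_j|>\delta/2$ for $1\le j<n$. The first thing I would do is fix $\lambda_0\in(0,1)$, pick an auxiliary $\lambda_1\in(\lambda_0,1)$, and let $\delta_0$ be small enough that Proposition~\ref{prop:1-dim-exp}(ii) applies with $\lambda_1$ in place of $\lambda_0$ for every scale in $(0,2\delta_0)$; I will also shrink $\delta_0$ at the end to absorb a bounded multiplicative error into the change of base from $\lambda_1$ to $\lambda_0$.

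The second step is a normalization of the endpoints. If $|w_0|\ge\delta$, I replace the role of $\delta$ by $\delta':=|w_0|\in[\delta,2\delta)$; then $w_0\in\partial B(0,\delta')$ and, since $|w_j|>\delta/2$ for $1\le j<n$, the orbit does leave $B(0,\delta'/2)\supset B(0,\delta/2)$, though I must be careful that it may re-enter $B(0,\delta')$ at intermediate times. To deal with re-entries I would, exactly as in the proof of Proposition~\ref{prop:1-dim-exp}(iii), decompose $\{0,1,\dots,n\}$ at the successive times $0=n_0<n_1<\cdots<n_p\le n$ of "closest returns" — $n_{i+1}$ minimal with $n_{i+1}>n_i$ and $|w_{n_{i+1}}|\le|w_{n_i}|$ — so that on each block $[n_i,n_{i+1}]$ the orbit stays (weakly) above its right endpoint and the total telescoping product $\prod_i(|w_{n_i}|/|w_{n_{i+1}}|)^{d-1}$ collapses to $(|w_0|/|w_n|)^{d-1}$ (or to $1$ if the minimum is attained in the interior, in which case one splits there and estimates the two halves separately, the second half by part~(i)). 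On each block I apply Proposition~\ref{prop:1-dim-exp}(ii) at scale $2\delta$, which is legitimate once $\delta_0$ is small, getting $|Df_0^{n_{i+1}-n_i}(w_{n_i})|\ge\lambda_1^{\,n_{i+1}-n_i}\min(1,(|w_{n_i}|/|w_{n_{i+1}}|)^{d-1})$. Multiplying the blocks and using $\lambda_1>\lambda_0$ with $\delta_0$ small to swallow any missing tail gives the claimed bound.

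The main obstacle I anticipate is purely bookkeeping: making the closest-return decomposition interact cleanly with the fact that the hypotheses only control $|w_j|$ from below by $\delta/2$ rather than by the exact return scale, and making sure the case where $\min_{0\le j\le n}|w_j|$ is attained at an interior index is routed through part~(i) (which needs the lower bound $\delta/2$ on the orbit, and is where the $\min(1,\cdot)$ truncation comes from) rather than through part~(ii). Once the decomposition is set up correctly, each individual estimate is an immediate citation of Proposition~\ref{prop:1-dim-exp}, and no new dynamics enters; the $2\delta$ versus $\delta$ discrepancy and the base change $\lambda_1\to\lambda_0$ are absorbed by choosing $\delta_0$ small, since the number of blocks is at most $n$ and each block loses only a factor $(\lambda_1/\lambda_0)^{-(n_{i+1}-n_i)}$, which is harmless.
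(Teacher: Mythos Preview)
Your plan has the right shape but contains a genuine gap. On a closest-return block $[n_i,n_{i+1}]$ you only know $|w_j|>|w_{n_i}|$ for $n_i<j<n_{i+1}$, so you cannot ``apply Proposition~\ref{prop:1-dim-exp}(ii) at scale $2\delta$'' there: that requires the intermediate points to exceed $2\delta$, not merely $\delta/2$. If instead you apply (ii) at the variable scale $|w_{n_i}|$, each block yields only $\lambda_1^{n_{i+1}-n_i}$ (the $\min$ is $1$ since $|w_{n_{i+1}}|\le|w_{n_i}|$), and no telescoping product appears. The real trouble is the tail: routing it through part~(i) produces a constant $\kappa(\lambda_1,\delta/2)$ which \emph{depends on $\delta$} and tends to $0$ as $\delta\to 0$. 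Your remark that this is absorbed by the base change $\lambda_1\to\lambda_0$ is unjustified: absorption would require $\kappa(\lambda_1,\delta/2)\ge(\lambda_0/\lambda_1)^n$, but you have no lower bound on $n$ in terms of $\delta$. You also do not handle the case where the minimum of $|w_j|$ over $0\le j\le n$ is attained at $j=0$ (so $|w_0|\ll\delta$), which lies outside both your closest-return scheme and your interior-minimum clause.

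The paper supplies precisely the missing ingredient: a return-time lower bound. Set $N(\delta)$ to be the minimal first-return time of $f_0$ from $\overline{B(0,2\delta)}$ to itself; then $N(\delta)\to\infty$ as $\delta\to0$. Instead of closest returns, decompose at \emph{all} times $0=n_0<\cdots<n_p=n$ with $|w_{n_j}|<2\delta$. On each block the intermediate points genuinely satisfy $|w_j|\ge 2\delta$, so (ii) at scale $\rho=2\delta$ applies directly. Since the interior return levels $|w_{n_j}|$ lie in $[\delta/2,2\delta)$ by hypothesis, each block costs at most a fixed factor $4^{-(d-1)}$, \emph{independent of $\delta$}; and since consecutive returns are at least $N(\delta)$ apart, $p\le n/N(\delta)$, whence $4^{-p(d-1)}\ge\lambda_0^{n/2}$ once $\delta_0$ is small. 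A short case split on whether $|w_0|$ and $|w_n|$ lie above or below $\delta/2$ then finishes the argument, with the factor $(|w_0|/|w_n|)^{d-1}$ coming from the first block when $|w_0|<\delta/2$.
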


We note that the condition on the intermediate
points of the orbit $|w_j|> \delta/2$  is weaker than what it would have been for Proposition
\ref{prop:1-dim-exp} (ii) (where it would be $|w_j|> 2\delta$).
\begin{proof} Fix $\lambda_0\in (0,1)$. By Proposition~\ref{prop:1-dim-exp} (ii), there exists $\delta_0>0$ such that the following holds: If $\{w_j\}_{j=0}^n$ is an $f_0$-orbit for which there exists $\rho\in (0,\delta_0)$ with $|w_0|< \rho, |w_n|\le \rho$, and $\min_{j=1}^{n-1} |w_j|> \rho$, then
\begin{equation}\label{eqn:irret1}
|Df_0^n(w_0)|\ge \lambda_0^{n/2}\min (1, (|w_0|/|w_n|)^{d-1}).
\end{equation}
Let $$N(\delta)=\inf\{s\ge 1: f_0^s(\overline{B(0, 2\delta)})\cap \overline{B(0,2\delta)}\not=\emptyset\}.$$
Then $N(\delta)\to\infty$ as $\delta\to 0$.

Now let $(w_j)_{j=0}^n$ be as in the lemma with $\delta>0$ small.

{\em Case 1.} $|w_0|, |w_n|< \delta/2$. In this case the desired estimate follows immediately from (\ref{eqn:irret1}) by taking $\rho=\delta/2$.

{\em Case 2.} $|w_0|\ge \delta/2$. Let $0=n_0<n_1<n_2<\cdots<n_p=n$ be all the  positive integers in $\{0,1,\ldots, n\}$ with $|w_{n_j}|\le 2\delta$. For each $0\le j<p$, $|w_{n_i}|/|w_{n_{i+1}}|\ge 1/4$, so taking $\rho=2\delta$ in (\ref{eqn:irret1}), we obtain
$$|Df_0^{n_{j+1}-n_j}(w_{n_j})|\ge \lambda_0^{n/2} 4^{-(d-1)}.$$
Thus $$|Df_0^{n} (w_0)|\ge \lambda_0^{n/2} 4^{-p(d-1)}.$$
Since $n_{j+1}-n_j\ge N(\delta)$, $p\le n/N(\delta)$. Provided that $\delta$ is small enough, $p\ll n$, so that $4^{-p(d-1)}\ge \lambda_0^{n/2}$.
Then $|Df_0^n(w_0)|\ge \lambda_0^n$.

{\em Case 3.} $|w_0|< \delta/2$ and $|w_n|\ge \delta/2$. Define $n_j$ as above. Then
$$|Df_0^{n_1}(w_0)|\ge \lambda_0^{n/2} (|w_0|/(2\delta))^{d-1}\ge 4^{-(d-1)} \lambda_0^{n/2} (|w_0|/|w_n|)^{d-1},$$
and $$|Df^{n_{i+1}-n_i}(w_{n_i})|\ge \lambda_0^{n/2} 4^{-(d-1)}.$$
The desired estimate follows similarly as in Case (2).
\end{proof}
Recall the notion of tame orbits was introduced in Section 1, before the statement of Theorem \ref{thm:derivatives}.
\begin{lemma}\label{lem:freturn} Given $0<\lambda_0<1$, there exists $\delta_0>0$ and $\eta_0>0$ such that the following holds. Let $\{x_i=(z_i,w_i)\}_{i=0}^n$ be a tame $f$-orbit such that 
\begin{itemize}
\item $|w_0|, |w_n|\le \delta_0$;
\item $|z_0|<\eta_0$;
\item $|w_j|\ge |w_n|$ for each $0<j<n.$
\end{itemize}
Then
\begin{equation}\label{eqn:perturbedret}
|Df^n(x_0)(v)|\ge \lambda_0^n \min\left(1, (|w_0|/|w_n|)^{d-1}\right).
\end{equation}
%
\end{lemma}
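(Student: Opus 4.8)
The plan is to reduce the two-dimensional estimate \eqref{eqn:perturbedret} to the one-dimensional statement Lemma~\ref{lem:irret} via the binding argument, exactly as Lemma~\ref{2-dim-(i)} and Lemma~\ref{lem:kappa0} reduced to Proposition~\ref{prop:1-dim-exp}. First I would fix $\lambda_1\in(|\lambda|,\lambda_0)$ and apply Lemma~\ref{lem:irret} with $\lambda_1$ in place of $\lambda_0$ to get a threshold $\delta_0'$; this will govern the final choice of $\delta_0$. Set $y_0=(0,w_0)$ and compare the orbit of $x_0$ with that of $y_0$ under $f$, which on $L$ is just the $f_0$-orbit of $w_0$. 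Let $m_1=b_{\mu_0}(x_0,y_0)$ be the binding time. If $m_1\ge n$, then by the definition of binding time $|\xi_j(x_0)|$ and $|f_0^j(w_0)|$ differ by a factor at most $2$ for $0\le j<n$, so the $f_0$-orbit $(w_0,f_0(w_0),\dots,f_0^n(w_0))$ satisfies the hypotheses of Lemma~\ref{lem:irret} with $2\delta_0$ in place of $\delta$ (using $|w_0|,|w_n|\le\delta_0$ and $|w_j|\ge|w_n|$, so $|f_0^j(w_0)|\ge\frac12|w_n|\ge\frac14|f_0^n(w_0)|$ for $0<j<n$; a minor bookkeeping adjustment of constants handles the factor $\frac14$ versus $\frac12$). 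Then Lemma~\ref{*} upgrades $|Df_0^n(w_0)|\ge\lambda_1^n\min(1,(|w_0|/|w_n|)^{d-1})$ to the same bound for $|Df^n(x_0)(v)|$ up to a factor $\tfrac12$, and since $\lambda_1<\lambda_0$ this is absorbed for all $n$; for the finitely many small $n$ one uses a continuity argument in $z_0$ (choosing $\eta_0$ small) as in Lemma~\ref{2-dim-(i)}.

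The substantive case is $m_1<n$. Here I iterate the binding: set $y_1=(0,\xi_{m_1}(x_0))$, let $m_2-m_1=b_{\mu_0}(f^{m_1}(x_0),y_1)$, and continue, producing $0=m_0<m_1<\cdots<m_s$ with $m_s\le n$ and the final binding time exceeding $n-m_s$. On the last block one argues as in the case $m_1\ge n$ above. On each intermediate block $[m_{i-1},m_i]$ one invokes Lemma~\ref{**}: the quantity $W(f^{m_{i-1}}(x_0),y_{i-1},m_i-m_{i-1})$ is controlled because $z$-coordinates have shrunk by $|\lambda|^{m_{i-1}}$, giving $W\le Q|\lambda|^{k m_{i-1}}$ for a uniform $Q$ (here one needs a preliminary lower bound on the vertical derivatives inside each block to bound the denominators in \eqref{eqn:W}; tameness $|w_i|^d\ge|z_i|^k$ plus Lemma~\ref{*} comparing to the $f_0$-orbit, whose derivative is bounded below by Proposition~\ref{prop:1-dim-exp}, supplies this). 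Then Lemma~\ref{**} yields
$$|Df^{m_i-m_{i-1}}(f^{m_{i-1}}(x_0))(v)|\ge \frac{\mu_0|\xi_{m_i}(x_0)|}{2(m_i-m_{i-1}+1)^2\,Q|\lambda|^{k m_{i-1}}}.$$
Multiplying over $i$ and the last block, and using $|\xi_{m_i}(x_0)|\ge\frac12|w_n|$ for $i<s$ together with $|\lambda|<\lambda_0$, the telescoping product of $|\lambda|^{-km_{i-1}}$ beats the product of polynomial factors $(m_i-m_{i-1}+1)^{-2}$ and the powers of $\delta_0$; this forces $s$ to be bounded and the whole product to be bounded below by a uniform constant times $\lambda_0^n\min(1,(|w_0|/|w_n|)^{d-1})$, after possibly shrinking $\lambda_0$ slightly (which is harmless since we work with $\lambda_1<\lambda_0$).

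The main obstacle, as usual with binding arguments, is bounding the error quantities $W(\cdot,\cdot,\cdot)$ on intermediate blocks: one must simultaneously know that the vertical derivatives along the block are not too small (so the denominators in \eqref{eqn:W} are large) and that the accumulated distortion stays summable. The geometric decay $|\lambda|^{km_{i-1}}$ of the $z$-coordinate is what makes everything converge, but marrying it cleanly with the per-block derivative lower bounds — and checking that the $\min(1,(|w_0|/|w_n|)^{d-1})$ factor survives intact rather than degrading to something like $(|w_0|/|w_n|)^{d-1}$ with an extra exponent — is the delicate point. I expect the relation $b_{\mu_0}(x_1,x_3)\le\max(b_{\mu_1}(x_1,x_2),b_{\mu_1}(x_2,x_3))$ from \eqref{eqn:mu0mu1}, or rather its iterated form, to be used to control how the binding times chain together.
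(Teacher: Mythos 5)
Your overall strategy (compare with the fibre orbit over $z=0$ via the binding time, invoke Lemma~\ref{lem:irret} on the bound portion, and use Lemma~\ref{**} to restart) has the right ingredients, but the chaining of binding blocks as you describe it has a genuine gap at the crucial step, and the paper avoids it by a different structural device: an \emph{induction on $n$}. Concretely, your claimed per-block bound $W(f^{m_{i-1}}(x_0),y_{i-1},m_i-m_{i-1})\le Q|\lambda|^{km_{i-1}}$ with a \emph{uniform} $Q$ is not justified. The denominators in (\ref{eqn:W}) are controlled through Proposition~\ref{prop:1-dim-exp}, whose lower bound degrades by the factor $\min_j|f_0^j(\cdot)|^{d-1}$, which under the hypotheses of the lemma can be as small as $|w_n|^{d-1}$; what one actually gets is $W\lesssim |z_{m_{i-1}}|^k/\min_j|w_j|^{d-1}$. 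Feeding this into Lemma~\ref{**} and using tameness $|z_{m_{i-1}}|^k\le|w_{m_{i-1}}|^d$ yields a per-block factor of order $(|w_{m_i}|\,|w_n|^{d-1})/((L_i+1)^2|w_{m_{i-1}}|^d)$, i.e.\ roughly $(|w_n|/|w_{m_{i-1}}|)^d$. These factors do not telescope, and their product over the blocks is far smaller than the target $\lambda_0^n\min\bigl(1,(|w_0|/|w_n|)^{d-1}\bigr)$ (which must come out \emph{without} any multiplicative constant, since Theorem~\ref{thm:derivatives} multiplies these estimates over consecutive returns). Your assertion that the geometric decay $|\lambda|^{-km_{i-1}}$ ``forces $s$ to be bounded'' is not substantiated and does not repair this.

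The paper's proof resolves exactly this difficulty by inducting on $n$ and splitting the \emph{first} binding period $[0,s]$ into three cases. If $s>n$, one is purely one-dimensional (your first case). If the orbit drops to level $\le|w_0|$ at some minimal $t<s$, then Lemma~\ref{lem:irret} applied to the bound $f_0$-orbit up to time $t$ gives $|Df^t(x_0)(v)|\ge\lambda_0^t$ (the drop makes the $\min$ equal to $1$ with room to spare), and the inductive hypothesis — which carries no constant — handles $[t,n]$. Only when the orbit stays above $|w_0|$ throughout the binding period does one use Lemma~\ref{**}, and there the minimum of $|w_j|$ over the block is $|w_0|$ itself, so tameness gives the clean bound $W\le C_1|w_0|$ and hence $|Df^s(x_0)(v)|\ge\lambda_0^{s/2}$; the surplus $\lambda_0^{-s/2}$ (large because $s$ is large when $\delta_0$ is small, by Lemma~\ref{p}) absorbs the constant $\kappa_0$ from Lemma~\ref{lem:kappa0} before the induction is applied again at the next return below $\delta_0$. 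Without this case distinction on whether the orbit falls below $|w_0|$ during binding, and without the induction to keep the estimate constant-free, your argument does not close.
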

\begin{proof} Without loss of generality, we may assume that $1>\lambda_0>|\lambda|>0$. Let $\kappa_0$ be the constant given by Lemma~\ref{lem:kappa0}. 
Let $N$ be a large positive integer such that 
\begin{equation}\label{eqn:choiceN0}
\lambda_0^{-N}>\max(4^d, 2\kappa_0^{-1}),
\end{equation}
and such that 
\begin{equation}\label{eqn:choiceN}
\frac{\mu_0}{2(s+1)^2C_1}>\lambda_0^s
\end{equation}
holds for all $s\ge N$, where $C_1=C_1(\lambda_0)>0$ is a constant as in (\ref{eqn:C1}).  

Let $\delta_0>0$ be a small constant such that the following holds: 
\begin{itemize}
\item for any orbit $(z'_j,w'_j)_{j=0}^n$ satisfying $|z'_0|\le \delta_0$ and $|w'_0|\le \delta_0$ and $|w'_n|\le \delta_0$, we have $n\ge N$.  
\item if $|z'_0|\le \delta_0, |w'_0|\le \delta_0$, we have $b_{\mu_0} ((z'_0,w'_0), (0,w'_0))>N.$ 
\end{itemize}
Note that for small $\delta_0$, the first property is guaranteed by the variant of Przytycki’s lemma (=Lemma~\ref{p}), and the second property follows from definition of binding time by continuity since the critical point of $f_0$ is not periodic. 
 Let $\eta_2=\eta_2(\lambda_0^{1/2},\delta_0)$ be given by Lemma~\ref{lem:kappa0} and let $\eta_0=\min(\eta_2, \delta_0)$. 

We shall prove this lemma by induction on $n$.

{\bf Starting step.} We take the trivial case $n=0$ as the starting step of the induction.

{\bf Inductive step.} Now let $n_0$ be a positive integer and assume that the lemma holds under the additional assumption that $n<n_0$. Let us consider an $f$-orbit $(w_j)_{j=0}^n$ satisfying the assumption of the lemma with $n=n_0$. Note that existence of such an orbit implies that $n_0\ge N$.

Let $s$ be the $\mu_0$-binding time of $(z_0, w_0)$ and $(0,w_0)$. Then $s\ge N$.
Let $w_{i, 0}=f_0^i(w_0)$. By Lemma~\ref{*}, for any integer $1\le i\le s$,
\begin{equation}\label{eqn:compder}
|Df^i(x_0)(v)|\ge \frac{1}{2} |Df_0^i(w_0)|.
\end{equation}

{\bf Case 1.} $s>n$. Then by Lemma~\ref{lem:irret}, $$|Df_0^n(w_0)|\ge \lambda_0^{n/2} \min (1, |w_0|/|w_n|)^{d-1},$$
which together with (\ref{eqn:compder}) implies the desired estimate, since $$\lambda_0^{-n/2}\ge \lambda_0^{-N/2}>2.$$

{\bf Case 2.} $s\le n$ and there exists $t\in \{1,2,\cdots, s-1\}$ such that $|w_t|\le |w_0|$. Let $t$ be minimal with the last property.
Then $|w_{j,0}|>|w_j|/2\ge |w_0|/2$ for each $0<j<t$ and $|w_{t,0}|<2|w_t|\le 2 |w_0|$. Since $t\ge N$, $\lambda_0^{-t/2}> 2^d$. So by Lemma~\ref{lem:irret},
$$|Df_0^t(w_0)|\ge \lambda_0^{t/2} \min (1, (|w_0|/|w_{t,0}|)^{d-1}\ge 2\lambda_0^t \min (1, |w_0|/|w_t|)^{d-1}\ge 2\lambda_0^t,$$
hence $|Df^t(x_0)(v)|\ge \lambda_0^t$ by (\ref{eqn:compder}). By induction hypothesis, $$|Df^{n-t}(x_t)(v)|\ge \lambda_0^{n-t}\min (1, |w_t|/|w_n|)^{d-1}\ge \lambda_0^{n-t}.$$
Thus the desired estimate holds.

{\bf Case 3.} $s\le n$, and for each $1\le j<s$, $|w_j|>|w_0|$. In this case, we first prove the following inequality
\begin{equation}\label{eqn:estats}
|Df^s(x_0)(v)|\ge \frac{\lambda_0^{s/2}}{2}.
\end{equation}

If $|w_{s,0}|<|w_0|$, then by Lemma~\ref{lem:irret},
$$|Df_0^s(w_0)|\ge \lambda_0^{s/2},$$
so (\ref{eqn:estats}) holds by (\ref{eqn:compder}).
From now on, we assume that 
\begin{equation}\label{eqn:ws0w0}
|w_{s,0}|\ge |w_0|.
\end{equation} 
Let $$W=W((z_0,w_0), (0,w_0),s).$$ By Lemma~\ref{**}, we have 
\begin{equation}\label{eqn:Dfsx0v}
|Df^s(x_0)(v)|\ge \frac{|w_s-w_{s,0}|}{W}.
\end{equation}
Let us show 
\begin{equation}\label{eqn:wsws0}
|w_s-w_{s,0}|\ge \frac{\mu_0}{(s+1)^2} \min(|w_s|,|w_{s,0}|)\ge \frac{\mu_0}{2(s+1)^2}|w_0|.
\end{equation}
Indeed, if $|w_{s}|<|w_0|/2$, then by (\ref{eqn:ws0w0}), $|w_s-w_{s,0}|\ge |w_{s_0}|-|w_s|\ge |w_0|/2$, so (\ref{eqn:wsws0}) holds; if $|w_s|\ge |w_0|/2$, then 
$\min(|w_s|, |w_{s,0}|)\ge |w_0|/2$ and 
(\ref{eqn:wsws0}) follows from (\ref{eqn:binding0}).
Now, let us provide an upper bound for $W$. By Proposition~\ref{prop:1-dim-exp},
$|Df_0^j(w_0)|\ge C |w_0|^{d-1} \lambda_0^{j}$ for all $1\le j\le s$, where $C>0$ is a constant depending only on $\lambda_0$. 
Therefore, by (\ref{eqn:W00}),
\begin{equation}\label{eqn:W''}
W\le \sum_{i=1}^s\frac{4|\lambda^{k(i-1)}|z_0|^k|}{|Df_0^i(w_0)|}\le \sum_{i=1}^\infty \frac{4\lambda_0^{k(i-1)}|z_0|^k}{C|w_0|^{d-1}\lambda_0^i}\le C_1 |w_0|,
\end{equation}
where 
\begin{equation}\label{eqn:C1}
C_1=\frac{4}{C\lambda_0^k (1-\lambda_0^{k-1})}.
\end{equation}
and we have used the assumption $|w_0|\ge |z_0|^{k/d}$ for the last inequality. 
Substituting (\ref{eqn:wsws0}) and (\ref{eqn:W''}) into (\ref{eqn:Dfsx0v}), we obtain 
$$|Df^{s}(x_0)(v)|\ge \frac{1}{2(s+1)^2 C_1},$$
which implies (\ref{eqn:estats}) by (\ref{eqn:choiceN}), since $s\ge N$. 
%

To complete the proof in Case 3, 
let $n_1\ge s$ be minimal such that $|w_{n_1}|<\delta_0$. Then by Lemma~\ref{lem:kappa0},
$$|Df^{n_1-s}(x_s)(v)|\ge \kappa_0 \lambda_0^{n_1-s}.$$
By induction hypothesis,
$$|Df^{n-n_1}(x_{n_1})(v)|\ge \lambda_0^{n-n_1}.$$
Therefore
$$|Df^n(x_0)(v)|\ge \kappa_0\lambda_0^{-s/2}\lambda_0^n/2>\lambda_0^n,$$
where we have used (\ref{eqn:choiceN0}) and $s\ge N$ for the last inequality. 
\end{proof}
\medskip
\par Now we can prove Theorem \ref{thm:derivatives}.
\begin{proof}[Proof of Theorem~\ref{thm:derivatives}] Fix $\lambda_0\in (0,1)$ and let $\delta_0,\eta_0$ be given by Lemma~\ref{lem:freturn}. Let $\eta_1=\eta_1(\lambda_0,\delta_0)$ and $\eta_2=\eta_2(\lambda_0, \delta_0)$ be given by Lemmas~\ref{2-dim-(i)} and~\ref{lem:kappa0} respectively. Let $\eta:=\min(\eta_0, \eta_1, \eta_2)$.

Now let us consider a tame orbit $(x_i=(z_i,w_i))_{i=0}^n$. We may assume that $|z_0|<\eta$. Indeed, if $|z_0|\geq \eta$, let $N\geq 0$ be the minimal integer such that $|z_j|\geq \eta$ for $0\leq j\leq N$.  Then $\eta\le |z_0||\lambda|^N<|\lambda|^N$, so $N$ is bounded. By the tameness assumption, $\inf_{i=0}^{N}|w_i|$ is bounded away from zero, then
$|Df^{N+1}(x_0)(v)|$ is bounded away from zero.

If $|w_j|\ge \delta_0$ holds for all $0\le j<n$, then the desired estimate follows from Lemma~\ref{2-dim-(i)}. So we may assume that there is a minimal $n_0\in \{0,1,\ldots, n-1\}$ such that $|w_{n_0}|<\delta_0$. By Lemma~\ref{lem:kappa0}, $$|Df^{n_0}(x_0)(v)|\ge \kappa_0 \lambda_0^{n_0}.$$
Let $n_1\in \{n_0, n_0+1, \ldots, n\}$ be maximal such that $|w_{n_1}|=\inf_{j=n_0}^n |w_j|$. Then by Lemma~\ref{lem:freturn}, $$|Df^{n_1-n_0}(x_{n_0})(v)|\ge \lambda_0^{n_1-n_0}.$$

If $|w_n|\le |w_j|$ for all $0\le j<n$, then $n_1=n$, so the proof is completed. 
To deal with the general case, 
let us define a sequence of integers $n_1<n_2<\cdots<n_t\le n$ such that
\begin{itemize}
\item for each $1\le i<t$, $|w_{n_{i+1}}|=\inf_{j=n_i+1}^n |w_j|<\delta_0$;
\item for each $n_t+1<j<n$, $|w_j|\ge \delta_0$.
\end{itemize}
Then $|w_{n_1}|\le |w_{n_2}|\le \cdots\le |w_{n_t}|.$
By Lemma~\ref{lem:freturn}, for each $1\le i<t$,
$$|Df^{n_{i+1}-n_i}(x_{n_i})(v)|\ge \lambda_0^{n_{i+1}-n_i} \left(\frac{|w_{n_{i}}|}{|w_{n_{i+1}}|}\right)^{d-1}.$$
By Lemma~\ref{2-dim-(i)},
$$|Df^{n-n_t}(x_{n_t})(v)|\ge d|w_{n_t}|^{d-1} \kappa \lambda_0^{n-n_t-1}.$$
Combining all the displayed inequalities, we obtain the desired estimates.
\end{proof}

\section{Slow approach to the critical point}
In this section we prove Theorem \ref{slowapp}. This theorem will be deduced from the following Theorem~\ref{slow} which asserts that for almost every $z$ with respect to the Lebesgue measure in $B(0,r_0)$, pull-backs of the ball $B(0, e^{-\alpha n})$ along the vertical mappings $w\mapsto \xi_n(z,w)$ have bounded criticality, uniformly in $n$. Let $L_z=\{z\}\times\mathbb{C}$ denote the vertical line passing through $(z,0)$.

\begin{definition}\label{the set}
For  every $\alpha>0$, let $\Lambda_\alpha$ be the subset of  $B(0,r_0)$ characterized by the following property: for every $z\in \Lambda_\alpha$, there is a constant $N=N(z)>0$ such that for every integer $n\geq 1$, for every connected component $V$ of $\xi_n^{-1}(B(0,e^{-\alpha n}))\cap L_z$, there are at most $N$ integers $0\leq m\leq n$ satisfying $0\in f^m(V)$.
\end{definition}
\medskip
\par The following theorem implies Theorem \ref{slowapp}.
\begin{theorem}\label{slow}
Let $f(z,w)=(\lambda z, w^d+c(z))$ as in (\ref{uni}). Assume $(0,0)$ is not a periodic point.  Then for any $\alpha>0$, $\Lambda_\alpha$ is a full Lebesgue measure subset of $B(0,r_0)$.
\end{theorem}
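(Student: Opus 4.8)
The plan is to run a parameter-exclusion argument in the $z$-variable, treating the vertical lines $L_z$ as the analogue of the parameter family in Benedicks–Carleson. Fix $\alpha>0$. For each $n$ and each "return depth", I want to control, as a function of $z$, the set of $w$ for which the vertical orbit $w\mapsto\xi_j(z,w)$ comes very close to $0$ at many times before $n$; then I want to show that the set of bad $z$ (those admitting a component $V$ of $\xi_n^{-1}(B(0,e^{-\alpha n}))\cap L_z$ with more than $N$ passages through $0$) has small measure, summably in $n$, so that $\Lambda_\alpha$ has full measure by Borel–Cantelli. The key dynamical input making this possible is Theorem \ref{thm:derivatives} together with Lemma \ref{lem:bdcv}: along a tame orbit segment the vertical derivative $|Df^j(x)(v)|$ grows like $\lambda_0^j$ times a product of $|w_i|^{d-1}$ factors, so the vertical map $\xi_n$ restricted to $L_z$ is, away from critical points, a strong expansion; the critical points of $\xi_n|_{L_z}$ are exactly the preimages of $0$, and "bounded criticality of the pullback $V$" is precisely the statement that $V$ contains at most $N$ such critical passages.

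First I would set up the combinatorial bookkeeping: given $z$ and a component $V$ of $\xi_n^{-1}(B(0,e^{-\alpha n}))\cap L_z$, let $0\le m_1<m_2<\cdots<m_p\le n$ be the times with $0\in f^{m_i}(V)$, i.e. the times $V$ hits the critical point. Between consecutive such times I would invoke the binding argument: pick $y_i\in V$ with $\xi_{m_i}(y_i)=0$ (so $\xi_{m_i+1}(y_i)=c(0)$), and compare a generic $x\in V$ with the orbit of $(\lambda^{m_i+1}z, c(0))$, respectively with the orbit on $L=\{z=0\}$ started at $c(0)$, using the $\mu_0$-binding time. Lemma \ref{lem:bdcv} then gives a lower bound $|Df^{\,\ell_i}(f^{m_i+1}(x))(v)|\ge\lambda_0^{\ell_i}\delta_i^{-(d-1)}$ for the stretch from one return to the next deep return, where $\delta_i$ measures the depth of the $i$-th return; chaining these estimates and using the $z$-contraction $|z_j|=|\lambda|^j|z_0|$ controls the size of $V$ from above by something like $e^{-\alpha n}\prod\delta_i^{\,d-1}\lambda_0^{-n}$, while the depth of each return forces $\delta_i$ to be exponentially small and the corresponding "free" time between returns to be long (this is where Lemma \ref{p}, the Przytycki-type lemma, enters: a deep return of depth $\delta$ costs at least $C\log(1/\delta)$ iterates). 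So $p\le N$ automatically unless the total depth $\sum\log(1/\delta_i)$ is comparable to $n$.

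Next comes the actual exclusion. For a threshold $\delta=\delta(n)$ (polynomially small in $n$, say $n^{-2}$, or exponentially small — I would optimize later), and for each candidate return time $m$, I estimate the Lebesgue measure in $B(0,r_0)$ of the set of $z$ for which some component of $\xi_m^{-1}(B(0,\delta))\cap L_z$ has large vertical size; here I use that, as $z$ varies, the map $(z,w)\mapsto\xi_m(z,w)$ depends holomorphically on $z$ and that $\partial_z\xi_m$ is controlled (it involves $\sum\lambda^{\,k(i-1)}z_0^{k}\cdot(\text{products of }\xi)$, exactly the sum appearing in $W(x,y,m)$), so that a bad-$w$-set of small vertical measure persists only over a $z$-set of small two-dimensional measure — this is the transversality/slope estimate that replaces the classical parameter transversality of Benedicks–Carleson. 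Summing over the possible return patterns $(m_1,\dots,m_p)$ with $p>N$ and over $n$, using that each deep return consumes $\gtrsim\log(1/\delta)$ iterates so that the number of admissible patterns is subexponential while each contributes an exponentially small measure, gives a summable bound; Borel–Cantelli then yields that for a.e.\ $z$ only finitely many $n$ are bad, and absorbing those finitely many into the constant $N(z)$ finishes the proof that $z\in\Lambda_\alpha$.

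I expect the main obstacle to be the transversality/measure-transfer step: quantifying how a set of $w$'s of small vertical Lebesgue measure on the line $L_z$ — defined by the deep-return condition on $\xi_m$ — varies with $z$, and extracting from the holomorphic dependence of $\xi_m$ on $z$ (and the non-periodicity of $(0,0)$, which guarantees the comparison orbits genuinely separate) a genuine two-dimensional measure estimate uniform in $m$. The derivative estimates of Theorem \ref{thm:derivatives} and the binding lemmas handle the "vertical" direction cleanly, but converting vertical smallness into $z$-exclusion requires controlling $\partial_z$ of long compositions, and the non-periodicity hypothesis is exactly what is needed to rule out the degenerate case where the critical value line stays forever bound to the critical point, which would make the transversality vacuous. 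Once that estimate is in hand, the Borel–Cantelli summation is routine, and Theorem \ref{slowapp} follows by noting that on $\Lambda_\alpha$ a point $(z,w)$ with $z\in\Lambda_\alpha$ can have $|w_n|<e^{-\alpha n}$ for only boundedly many independent "deep" times, which by the usual argument forces $|w_n|\ge e^{-\alpha' n}$ eventually for any $\alpha'>\alpha$, and $\alpha$ being arbitrary this gives the slow-approach conclusion for a.e.\ point of $B(0,r_0)\times\mathbb{C}$ by Fubini.
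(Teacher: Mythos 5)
Your overall strategy (exclude bad vertical lines \`a la Benedicks--Carleson, use the binding argument and the derivative estimates for the vertical expansion, finish with Borel--Cantelli) is the same as the paper's, but the proposal leaves unresolved exactly the step on which the whole argument turns, and you say so yourself: the ``transversality/measure-transfer step.'' The paper's resolution has two ingredients that are absent from your plan. First, a reduction: by Lemma~\ref{pre} (imported from \cite{ji2019nonuni}), for \emph{every} $z$ a component $V$ of $\xi_n^{-1}(B(0,e^{-\alpha n}))\cap L_z$ can have at most $N(\alpha,\beta)$ critical passages at times $m\ge \beta n$, with no exclusion at all; so one only needs to rule out \emph{early} passages $m\lesssim \alpha n$, and a passage $0\in f^m(V)$ forces the single critical orbit $\xi_{n-m}(\lambda^m z,0)$ to enter $B(0,e^{-\alpha n})$. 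This reduces the exclusion to a condition on the critical orbit $l\mapsto \xi_l(z,0)$ alone (the sets $\Omega_{\alpha,m}$ and $K_{\alpha,m,l}$ of Section~4), rather than on the much more complicated family of all components of $\xi_m^{-1}(B(0,\delta))\cap L_z$ that you propose to control. Your combinatorial bookkeeping over return patterns $(m_1,\dots,m_p)$ is thereby avoided entirely.

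Second, and more importantly, the transversality itself is not a consequence of the non-periodicity of $(0,0)$, as you suggest; it comes from Levin's theorem. The paper computes (Lemma~\ref{relation}, via the recursion $\X_j=d\,\xi_{j-1}^{d-1}\X_{j-1}+\lambda^{j-1}c'(\lambda^{j-1}z)$) that
\[
\frac{\X_l(z_0)}{Df^{l-1}(f(z_0,0))(v)}\;=\;\sum_{i=0}^{l-1}\frac{\lambda^i c'(\lambda^i z_0)}{Df^i(x_1)(v)}\;\approx\; k z_0^{k-1}X_0,
\qquad X_0=\sum_{i=0}^\infty \frac{\lambda^{ik}}{(f_0^i)'(c(0))},
\]
and the whole point is that $X_0\neq 0$ by Proposition~\ref{prop:levin}(2) (the non-vanishing of $F(z)=1+\sum z^n/(f_0^n)'(c)$ on the unit disk, which rests on the nonnegativity of the lower Lyapunov exponent of the critical value). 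This identity converts the vertical derivative lower bounds of Theorem~\ref{thm:derivatives} and Lemma~\ref{lem:bdcv} into a lower bound on $\partial_z\xi_l(z,0)$, so that $z\mapsto\xi_l(z,0)$ maps a neighborhood of each excluded point conformally onto a ball of radius $\asymp |z_0|^{k/d}e^{-\alpha\omega l}$ (Lemma~\ref{lem:xinz}); Koebe distortion then gives $\mathrm{Vol}(K_{\alpha,m,l})\lesssim e^{-\gamma l}\mathrm{Vol}(B(0,|\lambda|^m r_0))$, and the Przytycki lemma guarantees $K_{\alpha,m,l}=\emptyset$ for $l\le\theta m$ so the sum over $l$ and $m$ converges. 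Without the non-vanishing of $X_0$ your slope estimate on $\partial_z\xi_m$ could degenerate and the two-dimensional measure estimate would fail; this is the concrete missing idea in your proposal.
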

\begin{proof}[Proof of Theorem \ref{slowapp} assuming Theorem \ref{slow}] 
By our assumption, there is $R>1$ such that for any $(z,w)$ in the domian of $f$ with $|w|>R$, we have $|\xi_1 (z,w)|\ge |w|>R$.

Since $\Lambda_\alpha$ has full area, it is sufficient  to prove for every $z\in \Lambda_\alpha$, a.e. $x\in L_z$ (with respect to the Lebesgue measure) is $2\alpha-$slow approach. Let $$E_{n,2\alpha}=\left\{ x\in L_z: |\xi_n(x)|<e^{-2\alpha n}\right\}$$ and $$E_{n,\alpha}=\left\{ x\in L_z: |\xi_n(x)|<e^{-\alpha n}\right\}.$$Let $V$ be a connected component of $E_{n,\alpha}$. By our assumption on $z$, the map $\xi_n:V\to B(0,e^{-\alpha n})$ has degree at most $d^N$. Since $\xi_n(V\cap E_{n,2\alpha})\subset B(0,e^{-2\alpha n})$, it follows from a version of Koebe distortion theorem for multivalent maps, see for instance \cite[Lemma 2.1]{przytycki1998porosity}, that there exists $\alpha'=\alpha'(\alpha, N)>0$ such that
\begin{equation*}
\frac{\text{Vol}(V\cap E_{n,2\alpha})}{\text{Vol}(V)}\leq e^{-\alpha' n}.
\end{equation*}
\medskip
\par Since $\xi_n(V\cap E_{n,2\alpha})\subset B(0,e^{-2\alpha n})\subset B(0, R)$, $V\subset \left\{ z\right\}\times B(0,R)$. 
Thus $\text{Vol}(E_{n,2\alpha})$ is exponentially small with respect to $n$ and thus $\sum_{n=1}^{+\infty} \text{Vol}(E_{n,2\alpha})<+\infty$. By the Borel-Cantelli lemma, Lebesgue a.e. $x\in L_z$ is contained in only finitely many $E_{n,2\alpha}$. This is equivalent to say that Lebesgue a.e. $x\in L_z$ is $2\alpha-$slow approach.
\end{proof}

The rest of this section is devoted to prove Theorem  \ref{slow}.  We shall use the following result which is a special case of  \cite[Lemma 4.5]{ji2019nonuni}.

\begin{lemma}\label{pre}
Let $f(z,w)=(\lambda z, w^d+c(z))$ be as in (\ref{uni}). Assume $(0,0)$ is not a periodic point. Then for  every $\alpha>0$, $0<\beta<1$, there is a constant $N=N(\alpha, \beta)>0$ such that for every integer $n\geq 1$ and  for every $z\in B(0,r_0)$,  for every connected component $V$ of $\xi_n^{-1}(B(0,e^{-\alpha n}))\cap L_z$, there are at most $N$ integers $\beta n\leq m\leq n$ satisfying $0\in \xi_m(V)$.
\end{lemma}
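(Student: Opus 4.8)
The plan is to show that any return occurring in the recent window $[\beta n,n]$ is necessarily very deep, and that Przytycki's estimate then forces deep returns to be far apart, so only boundedly many fit in the window. Fix a piece $V$ as in the statement, write $z_j=\lambda^jz$ and, for $0\le m\le n$, set $\delta_m:=\sup_{y\in V}|\xi_m(y)|$; at a return time $m$ (i.e. $0\in f^m(V)$) one has $\operatorname{diam}f^m(V)\le 2\delta_m$. Observe first that, by definition, $f^n(V)\subset\{z_n\}\times B(0,e^{-\alpha n})$, and that $\{w:\,|w^d+c(z')|<R_0\}\subset B(0,R_0)$ for $|z'|<r_0$ once $R_0$ is a suitably large fixed constant; a backward induction on $m$ then gives $f^m(V)\subset\{z_m\}\times B(0,R_0)$ for all $0\le m\le n$.

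\textbf{Key step: recent returns are deep.} Fix $\lambda_0\in(|\lambda|,1)$ so close to $1$ that $e^{-\alpha}\lambda_0^{-(1-\beta)}<1$. I claim there is $\rho=\rho(\alpha,\beta)\in(0,1)$ such that, for $n$ large, $\operatorname{diam}f^m(V)\le\rho^n$ for every $m\in[\beta n,n]$. The idea is to pull the smallness of $f^n(V)=\{z_n\}\times B(0,e^{-\alpha n})$ back along the orbit from time $m$ to $n$: between consecutive returns the orbit stays away from the critical point, so on each such stretch the relevant vertical map is injective and the lower derivative bounds of Section~3 (Theorem~\ref{thm:derivatives}, Lemmas~\ref{2-dim-(i)}, \ref{lem:kappa0} and~\ref{lem:freturn}) apply, together with bounded distortion (Lemmas~\ref{*} and~\ref{**}); telescoping over the at most $n-m\le(1-\beta)n$ steps and using $e^{-\alpha}\lambda_0^{-(1-\beta)}<1$ forces $f^m(V)$ to be exponentially small. \emph{Organizing this telescoping across the intervening critical returns is the main obstacle}: the vertical map from time $m$ to $n$ is not injective, so one proceeds by induction on $n$, or by the nested pull-back scheme of Section~3, and one must also keep the tameness condition $|w_i|^d\ge|z_i|^k$ under control — here it is essentially automatic, since $|z_i|\le|\lambda|^{\beta n}r_0$ is exponentially small, the genuinely non-tame sub-case being more favourable still.

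\textbf{Conclusion.} Let $\beta n\le m_1<m_2<\cdots<m_\ell\le n$ be all the return times in the window; we must bound $\ell$. For each $i<\ell$ pick $y_i\in V$ with $\xi_{m_i}(y_i)=0$, so that $f^{m_i}(y_i)=(z_{m_i},0)$: its $w$-coordinate is $0$, its $z$-coordinate satisfies $|z_{m_i}|^k\le|\lambda|^{\beta kn}r_0^k$, and by the Key step $|\xi_{m_{i+1}}(y_i)|\le 2\delta_{m_{i+1}}\le 2\rho^n$. Applying the variant of Przytycki's lemma, Lemma~\ref{p}, to the orbit of $(z_{m_i},0)$ over the interval $[m_i,m_{i+1}]$ with $\varepsilon=\varepsilon_n:=\max\bigl(2\rho^n,\ |\lambda|^{\beta kn}r_0^k\bigr)$, we get $m_{i+1}-m_i\ge C\log(1/\varepsilon_n)\ge c\,n$ for a constant $c=c(\alpha,\beta)>0$, provided $n$ is large; summing over $i$ gives $\ell\le(1-\beta)/c+2=:N(\alpha,\beta)$, while for the remaining bounded range of $n$ the trivial bound $\ell\le n$ suffices. (Here we have tacitly assumed $f_0$ has no attracting cycle, as required by Lemma~\ref{p} and by the estimates of Section~3; the case in which $f_0$ has an attracting cycle is treated separately and is simpler. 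Note finally that non-periodicity of $(0,0)$ forces $c(0)\ne0$, which is precisely what makes the conclusion of Lemma~\ref{p} nonvacuous.)
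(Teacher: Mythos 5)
First, a point of reference: the paper does not prove this lemma at all — it is imported verbatim as ``a special case of \cite[Lemma 4.5]{ji2019nonuni}'' — so there is no internal proof to compare with, and your attempt must stand on its own. Your ``Conclusion'' step is sound and is certainly part of any proof: once every return time $m'\in[\beta n,n]$ is known to be \emph{deep}, i.e. $\operatorname{diam}\xi_{m'}(V)\le\rho^{n}$, applying Lemma~\ref{p} to the critical point $(z_{m_i},0)\in f^{m_i}(V)$ over $[m_i,m_{i+1}]$ gives gaps of length at least $cn$ and hence at most $(1-\beta)/c+O(1)$ returns. (Note that the orbit of $(z_m,0)$ lands in $f^n(V)\subset\{z_n\}\times B(0,e^{-\alpha n})$ at time $n-m$, so Lemma~\ref{p} applies directly with $\varepsilon=\max(e^{-\alpha n},|\lambda|^{k\beta n}r_0^k)$ and no diameter information; but that only shows each single return satisfies $n-m\ge cn$ and does not count them, so the diameter bound cannot be dodged.)

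The genuine gap is your ``Key step'', which you yourself call ``the main obstacle'' and then do not carry out; it is not a routine verification, and the mechanism you sketch does not close. Your telescoping condition $e^{-\alpha}\lambda_0^{-(1-\beta)}<1$ would suffice if the map from time $m$ to time $n$ were univalent with vertical derivative at least $\lambda_0^{\,n-m}$ on $f^m(V)$; but that map has a critical point at every one of the very return times you are trying to count, and the lower bounds of Section~3 carry the factor $\min_i|w_i|^{d-1}$, which degenerates exactly there. Concretely: pulling a disk back through one critical passage $w\mapsto w^d+c(z_{t_j})$ inflates its diameter by a $d$-th root, so the naive backward estimate after $j$ returns yields only $\operatorname{diam}\xi_{t_j}(V)\lesssim e^{-\alpha n/d^{\,j}}$; Lemma~\ref{p} then gives gaps $t_{j-1}-t_j\gtrsim \alpha n/d^{\,j-1}$, and the sum $\sum_j \alpha n\,d^{-(j-1)}\le \frac{d}{d-1}\alpha n$ stays below $(1-\beta)n$ for \emph{every} $j$ once $\alpha$ is small — so arbitrarily many returns are consistent with this bookkeeping and no bound $N(\alpha,\beta)$ follows. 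Beating the $d$-th-root loss is the real content of the lemma: one must extract genuine expansion between consecutive returns from the recovery estimates for orbits starting near the critical value (Lemma~\ref{lem:bdcv}, Proposition~\ref{prop:1-dim-exp}(ii), Lemma~\ref{lem:freturn}), transfer them from $f_0$ to $f$ by the binding argument, and run a simultaneous backward induction on the diameters and the gaps. Until that is done, the proof is incomplete at its central point.
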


To deduce Theorem~\ref{slow} from Lemma~\ref{pre}, we shall show that for almost every $z$, $\xi_n(\lambda^m z,0)$ cannot be too close to zero. More precisely, we shall show that for almost every $z$, $\lambda^m z$ belongs to the set $\Omega_{\alpha,m}$ defined below. 
\begin{definition}
For each $\alpha>0$ and each integer $m\geq 0$,  define
\begin{equation*}
\Omega_{\alpha,m}=\left\{ z\in B(0,|\lambda|^{m}r_0): |\xi_{n}(z,0)|> |z|^{k/d}e^{-\alpha n} \;\text{for every} \;n\geq 1\right\}.
\end{equation*}
For each integer $l\geq 1$, define  $K_{\alpha,m,l}$ to be the following set 
\begin{equation*}
\left\{ z\in B(0,|\lambda|^{m}r_0):\;l\geq 1\; \text{is minimal such that}\;|\xi_{l}(z,0)|\leq |z|^{k/d}e^{-\alpha l}\right\}.
\end{equation*}

\end{definition}
\medskip
\par We have the following relations between $\Lambda_{\alpha}$, $\Omega_{\alpha,m}$ and $K_{\alpha,m,l}$.
\begin{lemma}\label{set}
Assume $(0,0)$ is not a periodic point of $f$. Then for  every $\alpha>0$, $m\geq 0$ and $k\geq 1$, the following  hold:
\begin{enumerate}
\item For every integer $N\geq 0$, $\bigcap_{m=N}^{\infty} \lambda^{-m}\Omega_{\alpha,m}\subset \Lambda_{2\alpha}\cup\{0\}$.
\item $B(0,|\lambda|^mr_0)\setminus \Omega_{\alpha,m}=\bigcup_{l=1}^\infty K_{\alpha,m,l}$.
\end{enumerate}
\end{lemma}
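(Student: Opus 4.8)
Part (2) is a direct unraveling of the definitions, so I would dispose of it first. A point $z\in B(0,|\lambda|^m r_0)$ fails to belong to $\Omega_{\alpha,m}$ exactly when the inequality $|\xi_n(z,0)|>|z|^{k/d}e^{-\alpha n}$ fails for at least one $n\ge 1$; letting $l\ge 1$ be the least such $n$ places $z$ in $K_{\alpha,m,l}$, and conversely each $K_{\alpha,m,l}$ is by construction contained in the complement of $\Omega_{\alpha,m}$. (Note that $z=0$ is never in the complement: since $(0,0)$ is not periodic, $f_0^n(0)\ne 0$ for all $n\ge 1$, so $|\xi_n(0,0)|>0=|0|^{k/d}e^{-\alpha n}$.) This gives the claimed identity.

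For Part (1), fix $z\ne 0$ with $\lambda^m z\in\Omega_{\alpha,m}$ for every $m\ge N$; in particular $z\in B(0,r_0)$. The goal is to produce a constant $N(z)$ as in Definition~\ref{the set} for the level $2\alpha$. The mechanism is a comparison: if $V$ is a connected component of $\xi_n^{-1}(B(0,e^{-2\alpha n}))\cap L_z$ and $0\in f^m(V)$, then some $(z,w)\in V$ has $f^m(z,w)=(\lambda^m z,0)$, hence $\xi_n(z,w)=\xi_{n-m}(\lambda^m z,0)$ and so $|\xi_{n-m}(\lambda^m z,0)|<e^{-2\alpha n}$. When moreover $m\ge N$ and $m\le n-1$, the membership $\lambda^m z\in\Omega_{\alpha,m}$ applied with index $n-m$ (which is $\ge 1$) yields $|\xi_{n-m}(\lambda^m z,0)|>|\lambda^m z|^{k/d}e^{-\alpha(n-m)}\ge|\lambda^m z|^{k/d}e^{-\alpha n}$. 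Comparing the two bounds forces $|\lambda^m z|^{k/d}<e^{-\alpha n}$, i.e. $m>c_1 n-c_2(z)$ where $c_1=d\alpha/(k\log(1/|\lambda|))>0$ and $c_2(z)=\log(1/|z|)/\log(1/|\lambda|)>0$. Thus the set of $m\in\{0,\dots,n\}$ with $0\in f^m(V)$ is contained in $\{0,\dots,N-1\}\cup\{m:m>c_1 n-c_2(z)\}$.

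To bound this, the first block contributes at most $N$ integers. For the second, fix any $\beta\in(0,\min(1,c_1))$: then $c_1 n-c_2(z)\ge\beta n$ once $n\ge n_0(z):=c_2(z)/(c_1-\beta)$, so for such $n$ every relevant $m$ in the second block lies in $[\beta n,n]$, and Lemma~\ref{pre} — applied with the exponent $2\alpha$ in place of $\alpha$ and with this $\beta$ — bounds the number of integers $m\in[\beta n,n]$ with $0\in f^m(V)$ by a constant $N(2\alpha,\beta)$ depending on neither $n$, $z$, nor $V$. For the finitely many $n<n_0(z)$ the whole set $\{0,\dots,n\}$ has fewer than $n_0(z)+1$ elements, which is a trivial bound. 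Taking $N(z)$ to be the maximum of $N+N(2\alpha,\beta)$ and $n_0(z)+1$ then shows $z\in\Lambda_{2\alpha}$, completing (1).

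The only point that requires a little care is matching the two regimes: $\Omega_{\alpha,m}$-membership is available only for $m\ge N$, whereas Lemma~\ref{pre} only controls $m\ge\beta n$, and one needs these two ranges to cover all of $\{N,\dots,n\}$ once $n$ is large. This is exactly where $c_1>0$ and the freedom to shrink $\beta$ below $c_1$ are used; it is immaterial that the crossover threshold $n_0(z)$ depends on $z$, since $N(z)$ is allowed to. Finally, the exclusion of $z=0$ is genuine rather than cosmetic: $0$ always lies in every $\Omega_{\alpha,m}$, but without a hyperbolicity hypothesis on $f_0$ the origin of $L_0$ may be recurrent under $f_0$ with unbounded criticality along backward branches, so $0$ need not belong to $\Lambda_{2\alpha}$.
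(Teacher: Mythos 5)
Your proof is correct and follows essentially the same route as the paper: part (2) by unraveling definitions, and part (1) by using $\Omega_{\alpha,m}$-membership to rule out $0\in f^m(V)$ for $m$ below a linear threshold in $n$, then invoking Lemma~\ref{pre} with exponent $2\alpha$ to bound the count above that threshold. You are in fact somewhat more careful than the paper on two points: the paper effectively applies Lemma~\ref{pre} with $\beta=d\alpha/(-k\log|\lambda|)$ without noticing this may exceed $1$ (outside the lemma's hypothesis), whereas your choice $\beta\in(0,\min(1,c_1))$ together with the $z$-dependent crossover $n_0(z)$ avoids the issue; and you make explicit why the exclusion of $z=0$ is needed, which the paper leaves implicit. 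These are improvements in rigor rather than a different method.
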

 Here for $\theta\in\mathbb{C}$ and $X\subset \mathbb{C}$, $\theta X:=\left\{\theta x:x\in X\right\}$.
\begin{proof}
The second statement is obvious by the definition of $\Omega_{\alpha,m}$ and $K_{\alpha,m,l}$. We only prove (1). Let $0\not=z\in \bigcap_{m=N}^{\infty} \lambda^{-m}\Omega_{\alpha,m}$. Then for every $m\geq N$, $n>m$, we have $|\xi_{n-m}(\lambda^m z,0 )|> |\lambda^{km/d}||z|^{k/d}e^{-\alpha (n-m)}$. Thus when $|\lambda^{km/d}|\geq e^{-\alpha n/3}$ and $|z|^{k/d}\ge e^{-\alpha n/3}$, i.e. when $m\leq \alpha d n/(-3k\log|\lambda|)$, and $n$ is large enough, we have that $|\xi_{n-m}(\lambda^m z,0)|> e^{-2\alpha n}$. This in particular implies that for $n$ large enough, for every connected component $V$ of $\xi_n^{-1}(B(0,e^{-2\alpha n}))\cap L_z$, if $m\leq \alpha d n/(-3k\log|\lambda|)$,  then $0\notin f^m(V)$. 
\par On the other hand, by Lemma \ref{pre}, we know that  for every connected component $V$ of $\xi_n^{-1}(B(0,e^{-2\alpha n}))\cap L_z$, there are at most $N(2\alpha, d\alpha /(-3k\log|\lambda|)$ integers $m$ such that $0\in f^m(V)$ for $m> \alpha d n/(-3k\log|\lambda|)$. To summarize, for every connected component $V$ of $\xi_n^{-1}(B(0,e^{-2\alpha n}))\cap L_z$,  there are at most $N(2\alpha, d\alpha /(-3k\log|\lambda|)$ integers $m$ such that $0\in f^m(V)$. Hence $z\in \Lambda_{2\alpha}$. This implies (1).
\end{proof}

To complete the proof, we shall show that $K_{\alpha,m,l}$ has exponentially small area relative to $B(0,|\lambda|^m r_0)$. To this end, we shall analyze the property of the map $z\mapsto \xi_l(z,0)$. We shall show that for each $z_0\in K_{\alpha,m,l}$, $\xi_l(z,0)$ maps a neighborhood of $z_0$ conformally onto its image which contains a ball much larger than $B(0, |z_0|^{k/d}e^{-\alpha l})$, see Lemma~\ref{lem:xinz}.  The strategy bears strong analogue with the parameter exclusion technique introduced by Benedicks-Carleson  in \cite{benedicks1985iterations}. Indeed, the vertical fibres $w\mapsto \xi_l(z,w)$ is a composition of maps close to $f_0$, parameterized in $z$. Provided that $\xi_j(z_0, 0)$ is not too close to $0$ for $1\le j<l$, we can relate the derivative $\frac{\partial \xi_l(z,0)}{\partial z}$ with $Df_0^{l-1}(c(0))$ in a very precise manner, see Lemma~\ref{relation} below.
%
%
%

\newcommand{\X}{\mathcal{X}}

Let
$$\X_l(z)=\frac{d}{dz} \xi_l(z,0).$$
Let
$$\widehat{K}_{\alpha, m,l}=\{z\in B(0, 2|\lambda|^{m}r_0): 4|\xi_{j}(z,0)|\ge |z|^{k/d} e^{-\alpha j}, 1\le j<l\}.$$
Recall that by Proposition \ref{prop:levin} (2),
$$X_0:=\sum_{i=0}^{\infty}\frac{\lambda^{ik}}{(f_0^i)'(c(0))}\not=0.$$

\begin{lemma}\label{relation}
Let $f(z,w)=(\lambda z, w^d+c(z))$ be as in (\ref{uni}). Assume $(0,0)$ is not a periodic point and let $\alpha>0$ be such that $e^{d\alpha} |\lambda|^k<1$. Then for $m$ and $l$ large and for every $z_0\in \widehat{K}_{\alpha,m,l}$, we have
\begin{equation*}
\bigg|\frac{\X_l(z_0)}{Df^{l-1}(f(z_0,0))(v)}-kX_0z_0^{k-1}\bigg|\leq \frac{k|X_0||z_0|^{k-1}}{2}.
\end{equation*}
\end{lemma}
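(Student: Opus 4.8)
The plan is to derive an exact telescoping identity for the left‑hand side and then compare it term by term with the power series defining $X_0$; the bridge between the two is a shadowing estimate relating the vertical derivatives of $f$ along the orbit of $f(z_0,0)$ to the derivatives of $f_0$ along the critical orbit. First I would set up the telescoping identity. From $\xi_{n+1}(z,0)=\xi_n(z,0)^d+c(\lambda^n z)$ one has $\X_1(z)=c'(z)$ and $\X_{n+1}(z)=d\,\xi_n(z,0)^{d-1}\X_n(z)+\lambda^n c'(\lambda^n z)$. Since the vertical derivative of $f$ at a point with $w$-coordinate $w$ equals $dw^{d-1}$, we get $Df^n(f(z_0,0))(v)=\prod_{j=1}^n d\,\xi_j(z_0,0)^{d-1}$ (so $Df^0(f(z_0,0))(v)=1$), and dividing the recursion for $\X_{n+1}$ by $Df^n(f(z_0,0))(v)=d\,\xi_n(z_0,0)^{d-1}Df^{n-1}(f(z_0,0))(v)$ makes the first term telescope. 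Summing over $1\le n\le l-1$ yields
\begin{equation*}
\frac{\X_l(z_0)}{Df^{l-1}(f(z_0,0))(v)}=c'(z_0)+\sum_{n=1}^{l-1}\frac{\lambda^n c'(\lambda^n z_0)}{Df^n(f(z_0,0))(v)}.
\end{equation*}
Since $c(z)=c(0)+z^k+O(z^{k+1})$, we have $c'(z)=kz^{k-1}(1+O(z))$ and $\lambda^n c'(\lambda^n z_0)=k\lambda^{nk}z_0^{k-1}(1+O(\lambda^n z_0))$, so the right‑hand side equals $kz_0^{k-1}$ times $(1+O(z_0))+\sum_{n=1}^{l-1}\lambda^{nk}(1+O(\lambda^n z_0))/Df^n(f(z_0,0))(v)$.

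The heart of the proof is the shadowing step. The $f$-orbit of $f(z_0,0)=(\lambda z_0,c(z_0))$ has $w$-coordinates $\xi_{n+1}(z_0,0)$, while that of $(0,c(0))$ has $w$-coordinates $f_0^n(c(0))$ and vertical derivatives $Df^n((0,c(0)))(v)=(f_0^n)'(c(0))$. The hypothesis $z_0\in\widehat{K}_{\alpha,m,l}\subset B(0,2|\lambda|^m)$ gives $|z_0|$ small and $|\xi_j(z_0,0)|\ge\tfrac12|z_0|^{k/d}e^{-\alpha j}$ for $1\le j<l$; combined with $e^{d\alpha}|\lambda|^k<1$ this makes the orbit of $f(z_0,0)$ tame up to time $l$, so by Theorem~\ref{thm:derivatives} (or Proposition~\ref{prop:1-dim-exp} together with Lemma~\ref{*}) the quantity $|Df^i(f(z_0,0))(v)|$ is bounded below, up to a constant, by $\lambda_0^i(|z_0|^{k/d}e^{-\alpha i})^{d-1}$. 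I would then apply Lemma~\ref{**} to $x=f(z_0,0)$ and $y=(0,c(0))$: the defect \eqref{eqn:W} satisfies
\begin{equation*}
W(f(z_0,0),(0,c(0)),n)\le 4|z_0|^k+\sum_{i=1}^n\frac{4|\lambda|^{ik}|z_0|^k}{|Df^i(f(z_0,0))(v)|},
\end{equation*}
and $e^{d\alpha}|\lambda|^k<1$ is precisely what turns this into a geometric series, so $W\to0$ as $|z_0|\to0$. Using this, Proposition~\ref{prop:levin}(1) (so that $(f_0^n)'(c(0))$ does not decay exponentially), and a bootstrap on $n$, one shows the binding time $b_{\mu_0}(f(z_0,0),(0,c(0)))$ exceeds $l-1$, whence Lemma~\ref{*} gives, for $1\le n\le l-1$,
\begin{equation*}
\Bigl|\frac{Df^n(f(z_0,0))(v)}{(f_0^n)'(c(0))}-1\Bigr|\le\varepsilon(m),\qquad\text{with }\varepsilon(m)\to0\text{ as }m\to\infty.
\end{equation*}

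Finally I would plug this back into the identity from the first step. The bracket becomes $(1+O(z_0))+\sum_{n=1}^{l-1}\lambda^{nk}(1+O(\lambda^n z_0))(1+O(\varepsilon(m)))/(f_0^n)'(c(0))$. By Proposition~\ref{prop:levin}(2) the power series $F(z)=1+\sum_{n\ge1}z^n/(f_0^n)'(c(0))$ has radius of convergence at least $1$, so $\sum_{n\ge0}\lambda^{nk}/(f_0^n)'(c(0))$ converges absolutely to $X_0=F(\lambda^k)\ne0$; consequently the tail beyond $n=l-1$ together with the error factors $O(z_0),O(\lambda^n z_0),O(\varepsilon(m))$—all summed against an absolutely convergent series—contribute only $o(1)$ as $m,l\to\infty$. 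Hence the bracket equals $X_0(1+o(1))$, and for $m$ and $l$ large it lies within $\tfrac12|X_0|$ of $X_0$; multiplying by $kz_0^{k-1}$ gives the asserted inequality $|\X_l(z_0)/Df^{l-1}(f(z_0,0))(v)-kX_0z_0^{k-1}|\le\tfrac12 k|X_0||z_0|^{k-1}$.

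I expect the main difficulty to be the shadowing step: propagating multiplicative control of the vertical derivatives over the whole orbit of length $l-1$ using only the exponentially weak lower bound $|\xi_j(z_0,0)|\ge\tfrac12|z_0|^{k/d}e^{-\alpha j}$. This forces one to intertwine the binding estimates (Lemmas~\ref{*} and~\ref{**}), the one‑dimensional non‑uniform expansion (Theorem~\ref{thm:derivatives}, Proposition~\ref{prop:levin}), and the arithmetic condition $e^{d\alpha}|\lambda|^k<1$, which is exactly what keeps the defect series geometric; the non‑negativity of the Lyapunov exponent at $c(0)$ is essential so that the reference derivatives $(f_0^n)'(c(0))$ do not collapse.
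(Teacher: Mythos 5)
Your first step (the telescoping identity for $\X_l/Df^{l-1}(f(z_0,0))(v)$) and your treatment of the error coming from $c'(z)-kz^{k-1}=O(z^k)$ coincide with the paper's proof, and your observation that $e^{d\alpha}|\lambda|^k<1$ is what makes the defect series geometric is exactly the right mechanism. The gap is in your shadowing step. You assert that the binding time $b_{\mu_0}(f(z_0,0),(0,c(0)))$ exceeds $l-1$ and that Lemma~\ref{*} then yields $\bigl|Df^n(f(z_0,0))(v)/(f_0^n)'(c(0))-1\bigr|\le\varepsilon(m)\to 0$ uniformly for $1\le n\le l-1$. Neither claim holds in the regime you actually need. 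In the application (Lemma~\ref{K} and Theorem~\ref{slow}) one must treat \emph{all} $l\ge \theta m$, so $l$ can be arbitrarily large compared with $\log(1/|z_0|)\asymp m$. The separation of the two orbits at time $n$ is of order $|Df^n(f(z_0,0))(v)|\cdot W$ with $W\gtrsim |z_0|^k$, and since $(f_0^n)'(c(0))$ is only known to have nonnegative lower Lyapunov exponent (it may well grow exponentially, e.g.\ for Misiurewicz parameters), this separation exceeds the binding threshold $\mu_0\min(\cdot)/(n+1)^2$ after roughly $k\log(1/|z_0|)/\chi$ steps. So the binding genuinely ends long before $l-1$ when $l\gg m$. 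Moreover, even where binding does hold, Lemma~\ref{*} only gives the fixed bound $1/2$ on the ratio, not a quantity tending to $0$ with $m$; and a factor-$2$ control per term is not enough to conclude $|X-X_0|\le\tfrac14$, since the error $\sum_i|\lambda|^{ik}|1/Df^i-1/(f_0^i)'|$ could then be comparable to $\sum_i|\lambda|^{ik}/|(f_0^i)'(c(0))|$ itself.

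The paper's proof avoids this by never trying to maintain the multiplicative comparison with $(f_0^i)'(c(0))$ for all $i<l$. It uses the binding (via Lemma~\ref{lem:bdcv}) only up to some time $n\le s$ at which $|Df^n(f(z_0,0))(v)|\ge |z_0|^{-k(d-1)/d}e^{-\alpha n}$, and from then on switches to the \emph{absolute} lower bound of Theorem~\ref{thm:derivatives}: the definition of $\widehat K_{\alpha,m,l}$ together with $e^{d\alpha}|\lambda|^k<1$ makes the orbit tame on $[n,l)$, giving $|Df^i(f(z_0,0))(v)|\ge e^{-\alpha i}$ there. This yields $\sum_{i=0}^{l-1}|\lambda|^{ik}/|Df^i(f(z_0,0))(v)|\le \sum_i(|\lambda|^k e^{\alpha})^i<\infty$ with a tail beyond any fixed $M$ as small as desired; the closeness $|X-X_0|\le\tfrac14$ then follows by comparing only the finitely many terms $i<M$ (where continuity as $z_0\to 0$ applies) and discarding both tails. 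If you replace your uniform-ratio claim by this split — finite initial block by continuity, geometric tails by the absolute lower bounds — the rest of your argument goes through.
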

\begin{proof} Fix $1>\lambda_0>|\lambda|$. For $z_0\in \widehat{K}_{\alpha, m,l}$, write $x_i=(z_i,w_i)=f^i(z_0,0)$.

Let us first relate the derivative $\X_l(z_0)$ to the derivative $|Df^{l-1}(x_1)(v)|$. For each positive integer $j$,
\begin{equation*}
\xi_j(z,0)=(\xi_{j-1}(z,0))^d+c(\lambda^{j-1}z).
\end{equation*}
Taking derivatives on both side, we have
\begin{equation*}
\X_j(z)=d(\xi_{j-1}(z,0))^{d-1}\X_{j-1}(z)+\lambda^{j-1}c'(\lambda^{j-1}z).
\end{equation*}
Thus
\begin{equation*}
\frac{\X_j(z_0)}{Df^{j-1}(x_1)(v)}=\frac{\X_{j-1}(z_0)}{Df^{j-2}(x_1)(v)}+\frac{\lambda^{j-1}c'(\lambda^{j-1}z_0)}{Df^{j-1}(x_1)(v)}.
\end{equation*}
As $\X_0=0$, by induction, we obtain
\begin{equation}\label{4.4}
\frac{\X_l(z_0)}{Df^{l-1}(x_1)(v)}=\sum_{i=0}^{l-1}\frac{\lambda^{i}c'(\lambda^{i}z_0)}{Df^{i}(x_1)(v)}.
\end{equation}
By the definition of $k$ (see (\ref{eqn:cz-c0})), there exists a constant $A>0$ such that  $$|c'(z)-kz^{k-1}|\le A|z|^k$$ for all $|z|<|\lambda|r_0$.
Let $$X:=\sum_{i=0}^{l-1} \frac{\lambda^{ik}}{Df^i(x_1)(v)}.$$
Then we have
\begin{align}\label{eqn:xl}
\left|\frac{\X_l(z_0)}{Df^{l-1}(x_1)(v)}-kz_0^{k-1}X\right|&\le \sum_{i=0}^{l-1}\frac{|\lambda^i||c(\lambda^ iz_0)-k(\lambda^i z_0)^{k-1}|}{|Df^i(x_1)(v)|}\\&\le A\sum_{i=0}^{l-1}\frac{|\lambda|^{(k+1)i}}{|Df^i(x_1)(v)|} |z_0|^k . \notag
\end{align}

Let us now provide lower bounds for $|Df^i(x_1)(v)|$. Let $s_0$ be a large integer such that $4^d (e^{\alpha d} |\lambda|^k)^{s_0}<1.$
Let $s$ be the $\mu_0$-binding time of the pair  $(x_1, (z_1, c(0))$. 
Put 
$$\delta=\max(|w_1-c(0)|, |z_0|^k)^{1/d}=\max(|c(z_0)-c(0)|, |z_0|^k)^{1/d}=|z_0|^{k/d}.$$
Provided that $m$ is large enough, $\delta$ is small. So by Lemma~\ref{lem:bdcv}, 
there exists $s_0\le n\le s$, such that 
\begin{equation}\label{eqn:lemma2.6}
|Df^n(x_1)(v)|\ge |z_0|^{-k(d-1)/d} e^{-\alpha n}.
\end{equation}
Since $z_0\in \widehat{K}_{\alpha,m,l}$, for each $n\le i<l$, we have
$$|w_i|^d \ge  |z|^k e^{-\alpha i d} 4^{-d}> |z|^k |\lambda|^{ik} =|z_i|^k,$$
so $(x_i)_{i=n}^l$ is tame. By Theorem~\ref{thm:derivatives}, there exists $C=C(\alpha)>0$, such that for each $n<i\le l$, 
$$|Df^{i-n}(x_n)(v)|\ge C e^{-\alpha (i-n) }\min_{j=n}^i |w_j|^{d-1}.$$
As $|w_j|\ge |z_0|^{k/d} e^{-\alpha j}/4$ for each $1\le j<l$, we obtain   
$$|Df^{i-n}(x_n)(v)|\ge C' |z_0|^{k(d-1)/d} e^{-\alpha (di-n)},$$
where $C'$ is a constant. 
Together with (\ref{eqn:lemma2.6}), this implies 
\begin{equation}\label{eqn:dfix1}
|Df^i(x_1)(v)|=|Df^n(x_0)(v)||Df^{i-n}(x_n)(v)|\ge  C' e^{-d \alpha i}
\end{equation}
for each $n\le i<l$. 
On the other hand, by Lemma~\ref{*}, for each $0\le i\le s$,
$$|Df^i(x_1)(v)|\ge \frac{1}{2} |Df_0^i(c(0))|.$$
Combining with Proposition~\ref{prop:levin}, this implies that (\ref{eqn:dfix1}) remain true for $1\le i<n$ (replacing $C'$ by a larger constant if necessary).
It follows that
$$\sum_{i=0}^{l-1}\frac{|\lambda|^{(k+1)i}}{|Df^i(x_1)(v)|}$$ is bounded from above by a constant.
These lower bounds also imply that
$$|X-X_0|\le \frac{1}{4},$$
provided that $m, l$ are large enough. By (\ref{eqn:xl}), we obtain the desired estimate.
%
%
\end{proof}
\medskip

Consider two sets of positive real numbers $\left\{a_i\right\}_{i\in I}$, $\left\{b_i\right\}_{i\in I}$, where $I$ is an index set. In the following lemma we use the asymptotic notions $\asymp$ and $\succeq$. We say $a_i\asymp b_i$ if there exists a constant $C>0$ such that $b_i/C\leq a_i\leq Cb_i$, for every $i\in I$. We say $a_i\succeq b_i$ if there exists a constant $C>0$ such that $a_i\geq Cb_i$, for every $i\in I$. 
\begin{lemma}\label{lem:xinz} There exist constants $C>0$ and $\omega\in (0,1)$ such that for any $\alpha>0$ with $e^{d\alpha} |\lambda|^k<1$, the following holds provided that $m,l$ are sufficiently large. For each $z_0\in K_{\alpha,m,l}$, there exists a neighborhood $V'$ of $z_0$ such that
$\varphi_l(z)=\xi_l(z,0)$ maps $V'$ conformally onto $B(0, C |z_0|^{k/d} e^{-\alpha\omega l})$ and such that $V'\subset B(0, 2|z_0|)$.
\end{lemma}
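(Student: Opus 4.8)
The plan is to establish Lemma~\ref{lem:xinz} as a consequence of Lemma~\ref{relation} by running a standard "distortion + Koebe" argument on the map $\varphi_l(z)=\xi_l(z,0)$. The key point is that Lemma~\ref{relation} controls $\X_l(z_0)=\varphi_l'(z_0)$ up to a multiplicative constant: it says
$$\frac{1}{2}k|X_0||z_0|^{k-1}\le \left|\frac{\X_l(z_0)}{Df^{l-1}(f(z_0,0))(v)}\right|\le \frac{3}{2}k|X_0||z_0|^{k-1},$$
so that $|\varphi_l'(z_0)|$ is comparable to $|Df^{l-1}(f(z_0,0))(v)|\,|z_0|^{k-1}$. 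Since $z_0\in K_{\alpha,m,l}$, the minimality in the definition of $K_{\alpha,m,l}$ gives $2|\xi_j(z_0,0)|\ge |z_0|^{k/d}e^{-\alpha j}$ for all $1\le j<l$, i.e. $z_0\in \widehat K_{\alpha,m,l}$; moreover $|\xi_l(z_0,0)|\le |z_0|^{k/d}e^{-\alpha l}$. Running the lower bound for the vertical derivative exactly as in the proof of Lemma~\ref{relation} (binding to $(z_1,c(0))$, applying Lemma~\ref{lem:bdcv} to get past the binding time, then Theorem~\ref{thm:derivatives} afterwards since the orbit stays tame for $n\le i<l$), one gets $|Df^{l-1}(f(z_0,0))(v)|\ge C|z_0|^{-k(d-1)/d}e^{-\alpha\omega l}$ for a suitable $\omega\in(0,1)$ slightly bigger than the $\omega$ we want; combining, $|\varphi_l'(z_0)|\ge C'|z_0|^{-k(d-1)/d+(k-1)}e^{-\alpha\omega l}$. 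The exponent on $|z_0|$ here works out so that, multiplying by the radius of the domain disk (which will be of order $|z_0|$), the image disk has radius of order $|z_0|^{k/d}e^{-\alpha\omega l}$, i.e. much larger than $|z_0|^{k/d}e^{-\alpha l}$. (One bookkeeping point is to keep $\omega<1$ with room to spare, absorbing the polynomial-in-$l$ and distortion losses into the gap between $\omega l$ and $l$.)

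**Next I would control distortion** of $\varphi_l$ on a disk $B(z_0,\rho)$ with $\rho$ a small fixed multiple of $|z_0|$. Write $\varphi_l(z)=g_{l-1}\big(\xi_1(z,0)\big)$ where $g_{l-1}$ is the vertical composition $w\mapsto \xi_l$ obtained by iterating the fibered maps over the orbit segment $\lambda z,\lambda^2 z,\dots$; more precisely $\varphi_l'(z)=\big(\partial_w \xi_l\big)(\text{stuff})\cdot(\cdots)+\cdots$, but the cleaner route is: for $z$ near $z_0$ the orbit $(\lambda^i z, \xi_i(z,0))$ stays within a definite factor of $(\lambda^i z_0,\xi_i(z_0,0))$ as long as we have not yet reached the binding time with $(z,c(0))$ — this is where the $\widehat K$ condition and Lemma~\ref{*} / Lemma~\ref{**} type estimates give that $\log(\varphi_l'(z)/\varphi_l'(z_0))$ is a convergent sum of small terms. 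So $\varphi_l$ has bounded distortion on $B(z_0,\rho)$, hence by the Koebe $1/4$-theorem $\varphi_l$ is conformal on a slightly smaller disk $V'\subset B(z_0,\rho)\subset B(0,2|z_0|)$ and $\varphi_l(V')\supset B\big(\varphi_l(z_0),\,c_1|\varphi_l'(z_0)|\rho\big)$. Since $|\varphi_l(z_0)|=|\xi_l(z_0,0)|\le |z_0|^{k/d}e^{-\alpha l}$ is tiny compared to $c_1|\varphi_l'(z_0)|\rho \gtrsim |z_0|^{k/d}e^{-\alpha\omega l}$, this ball around $\varphi_l(z_0)$ contains $B(0, C|z_0|^{k/d}e^{-\alpha\omega l})$, after shrinking $C$ a bit. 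Taking $V'$ to be the preimage under (the conformal branch of) $\varphi_l$ of this ball gives the claim.

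**The main obstacle** is the distortion estimate for $\varphi_l$ on a full disk of radius comparable to $|z_0|$, rather than just at the point $z_0$: one must show that moving $z$ within $B(z_0,\rho)$ does not destroy the lower bounds on the vertical derivatives that were used (at $z_0$) to sum the series in Lemma~\ref{relation}. Concretely, the binding time $s=s(z)$ of $(z_1,c(0))$ with $(z_1,\xi_1(z,0))$, and the subsequent tameness of the orbit, must be uniform over $z\in B(z_0,\rho)$; this requires choosing $\rho=\rho_0|z_0|$ with $\rho_0$ small and arguing that the orbits of nearby $z$'s shadow the orbit of $z_0$ up to the relevant time scales (using that $\alpha$ is small, $m,l$ large, and the $\widehat K$-condition gives a margin of a factor $2$). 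Once this shadowing is in place, all the per-step estimates $\bigl|\log(\xi_i(z,0)/\xi_i(z_0,0))\bigr|$ are summably small and the distortion bound follows; the rest is the routine Koebe bookkeeping sketched above.
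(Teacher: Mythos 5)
There is a genuine gap — in fact two — and both occur exactly where your sketch defers to ``routine'' arguments.

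First, the source of the exponent $\omega<1$. You claim that running the derivative estimate from the proof of Lemma~\ref{relation} yields $|Df^{l-1}(f(z_0,0))(v)|\ge C|z_0|^{-k(d-1)/d}e^{-\alpha\omega l}$ with $\omega<1$. It does not: that argument gives $|Df^{i}(f(z_0,0))(v)|\ge C|z_0|^{-k(d-1)/d}e^{-\alpha i}$, so at $i=l-1$ you only get $e^{-\alpha(l-1)}$. Multiplying by $|z_0|^{k-1}$ and by a domain radius $\asymp|z_0|$ then produces an image ball of radius $\asymp|z_0|^{k/d}e^{-\alpha l}$ — exactly the scale of the set being excluded, with no gain. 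Such a bound is useless for Lemma~\ref{K}, where the whole point is that the ratio of the two radii is $e^{-\alpha(1-\omega)l}$. The gain must come from the defining property of $K_{\alpha,m,l}$, namely $|\xi_l(z_0,0)|\le|z_0|^{k/d}e^{-\alpha l}$: the paper splits according to whether $|f_0^l(0)|\ge 2|z_0|^{k/d}e^{-\alpha l/2}$ (in which case $|\xi_l(0,0)-\xi_l(z_0,0)|\ge|z_0|^{k/d}e^{-\alpha l/2}$ and Lemma~\ref{**} with $W\le C_0|z_0|^k$ gives $D_{l-1}(z_0)\gtrsim|z_0|^{k/d-k}e^{-\alpha l/2}$) or $|f_0^l(0)|<2|z_0|^{k/d}e^{-\alpha l/2}$ (in which case the chain rule $Df_0^{l}(c(0))=Df_0^{l-1}(c(0))\cdot d\,|f_0^l(0)|^{d-1}$ together with Proposition~\ref{prop:levin}(1) forces $|Df_0^{l-1}(c(0))|$ to be large). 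Your proposal never uses $|\xi_l(z_0,0)|\le|z_0|^{k/d}e^{-\alpha l}$, and without it $\omega<1$ is unobtainable.

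Second, the uniformity of the distortion/binding over a disk of radius $\rho_0|z_0|$. You correctly identify this as the main obstacle but your proposed fix — shadowing up to time $l-1$ on a disk of fixed relative radius $\rho_0$ — cannot work: the vertical derivatives $|D_{l-1}(z)|$ are typically exponentially large in $l$ (that is the content of Theorem~\ref{thm:derivatives}), so the maximal radius $r$ on which the orbits of $z$ and $z_0$ remain bound for $l-1$ steps is in general exponentially small in $l$, not comparable to $|z_0|$. The paper resolves this by a trade-off rather than by shadowing: it defines $r$ as the maximal radius on which the binding persists to time $l-1$, takes $r_1=\min(r,\varepsilon|z_0|)$, and in the case $r\ll|z_0|$ exploits the fact that the binding breaks at some time $n\le l-1$ for some $z\in\overline{B(z_0,r)}$, which yields $|D_{n-1}(z_0)|\,r\succeq l^{-2}|\xi_n(z_0,0)|$; the smallness of $r$ is thus exactly compensated by the largeness of the derivative already accumulated. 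Combined with the claim that $|\xi_n(z_0,0)|>l^2|z_0|^{k/d}e^{-\alpha\omega l}$ and the last statement of Theorem~\ref{thm:derivatives} for the remaining $l-n$ iterates, this gives the required lower bound on $|D_{l-1}(z_0)|\,r_1$. Without this case analysis your argument only covers the (possibly empty) set of $z_0$ for which the binding happens to persist on a macroscopic disk.
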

\begin{proof} We shall prove that there is $r_1\in (0, |z_0|)$ such that $\varphi_l(z):=\xi_l(z,0)$ is univalent on $B(z_0, r_1)$ and $\varphi_l(B(z_0,r_1))$ contains a ball centered at $\xi_l(z_0,0)$ with radius at least $C |z_0|^{k/d} e^{-\alpha \omega l}$, where $C>0$ and $\omega\in (0,1)$ are constants.

Note that $0\not\in K_{\alpha,m,l}$, so $z_0\not=0$.
Let $r$ be the maximal radius satisfying the following: $r\le 2|z_0|$ and for every $z\in B(z_0,r)$, if $s$ denotes the $\mu_0$-binding time of the pair $((\lambda z_0,\xi_1(z_0,0)), (\lambda z,\xi_1(z,0))$, then $s\geq l-1$. By (\ref{**}), for each $1\le j<l$, 
$$2|\xi_j(z,0)|\ge |\xi_j(z_0,0)|\ge |z_0|^{k/d} e^{-\alpha j}.$$
Let $\varepsilon>0$ be a small constant such that $2(1-\varepsilon)^{k/d}>1$. Let $r_1=\min (r, \varepsilon |z_0|).$ 
Then for $z\in B(z_0, r_1)$, we have 
$$4|\xi_j(z,0)|\ge |z|^{k/d} e^{-\alpha j}$$ for all $1\le j<l$. 
Thus $B(z_0, r_1)\subset \widehat{K}_{\alpha, m, l}$. 
By Lemma \ref{relation}, for each $z\in B(z_0, r_1)$, 
$$\left|\frac{\varphi_l'(z)}{kz^{k-1}X_0 D_{l-1}(z)}-1\right|<\frac{1}{2}, \;\forall z\in B(z_0,r),$$
where $D_i(z)=Df^i(f(z,0))(v)$.
Then $\varphi_l$ is univalent in $B(z_0, r_1)$ and it suffices to show that $|\varphi'(z_0)| r_1\ge C |z_0|^{k/d} e^{-\alpha \omega l}$ provided that $m,l$ are large enough. By the Koebe distortion theorem, we only need to show that 
\begin{equation}\label{4.7-2}
|z_0^{k-1} r_1 D_{l-1} (z_0)|\ge C |z_0|^{k/d} e^{-\alpha \omega l}.
\end{equation}

{\bf Case 1.} $r>|z_0|$. Then $r_1=\varepsilon |z_0|$. In this case $0\in B(z_0,r)$, so by the definition of $r$,
\begin{equation}\label{4.7-1}
|Df_0^{l-1} (c(0))|\asymp |D_{l-1} (z_0)|.
\end{equation}

{\em Subcase 1.1.} Assume that $|f_0^l(0)|\ge 2|z_0|^{k/d} e^{-\alpha l/2}.$ Then since $|\xi_l(z_0,0)|\leq |z_0|^{k/d} e^{-\alpha l}$, we have  $$|\xi_l(0,0)-\xi_l(z_0,0)|\ge |z_0|^{k/d} e^{-\alpha l/2}.$$
Then by (\ref{4.7-1}) and Proposition \ref{prop:levin} (1),  we have 
$$W:=2|\xi_1(z_0,0)-\xi_1(0,0)|+\sum_{i=1}^{l-1} \frac{2|c(\lambda^{i-1}z_0)-c(0)|}{|D_i(z_0)|}\le C_0|z_0|^k,$$
where $C_0>0$ is a constant.
Then by Lemma \ref{**} we have 
$$D_{l-1}(z_0)\geq \frac{|\xi_l(0,0)-\xi_l(z_0,0)|}{W}\geq \frac{|z_0|^{k/d} e^{-\alpha l/2}}{C_0|z_0|^k}.$$
Hence 
$$|z_0^{k-1}r_1D_{l-1}(z_0)| \ge   |z_0|^{k/d} e^{-\alpha l/2} \varepsilon/C_0,$$ hence (\ref{4.7-2}) holds.

{\em Subcase 1.2.} Assume that $|f_0^l(0)|<2|z_0|^{k/d} e^{-\alpha l/2}$.
Then by Proposition \ref{prop:levin} (1),
$$	e^{-\alpha l/2} \preceq |Df_0^{l}(c(0))|= |Df_0^{l-1}(c(0))| d|f_0^l(0)|^{d-1}$$
and 
$$|Df_0^{l-1}(c(0))| d|f_0^l(0)|^{d-1}\leq |Df_0^{l-1}(c(0))| 2^d d |z_0|^{k(d-1)/d} e^{-\alpha l (d-1)/2},$$
which, by (\ref{4.7-1}),  again implies that $$|z_0^{k-1}r_1D_{l-1}(z_0)|  \succeq  |z_0|^{k/d} e^{-\alpha l/2},$$ hence (\ref{4.7-2}) holds.

{\bf Case 2.} $r\le |z_0|$. Then $r\asymp r_1$.
By the maximality of $r$ and the definition of the binding time, there is a minimal integer $2\leq n\leq l-1$ such that for $z\in \overline{B(z_0,r)}$,  
\begin{equation*}
|\xi_n(z_0,0)-\xi_n(z,0)|= \frac{\mu_0\min(|\xi_n(z,0)|, |\xi_{n}(z_0,0)|)}{ n^2}\ge \frac{\mu_0|\xi_n(z_0,0)|}{2 n^2}.
\end{equation*}
It follows that
$$|D_{n-1}(z_0)|r_1\asymp |D_{n-1}(z_0)|r\succeq n^{-2} |\xi_n(z_0,0)|\ge l^{-2}|\xi_n(z_0,0)|.$$

Note that $\{f^j(z_0,0)\}_{j=n}^l$ is tame, and $|\xi_l(z_0,0)|< |\xi_j(z_0,0)|$ for all $n\le j<l$.  By the last statement of Theorem~\ref{thm:derivatives},
$$|D_{l-1}(z_0)|/|D_{n-1}(z_0)|\succeq e^{-\alpha (l-n)/2},$$ hence
\begin{equation}\label{eqn:Dl-1z0r} 
D_{l-1}(z_0)r_1 \succeq l^{-2} |\xi_n(z_0,0)| e^{-\alpha (l-n)/2}.
\end{equation}

{\em Case 2.1.} $|\xi_n(z_0,0)|\ge |z_0|^{k/d} e^{-\alpha l/4}.$ 
Then 
$$D_{l-1}(z_0) r\succeq l^{-2} |z_0|^{k/d} e^{-3\alpha l/4}\succeq |z_0|^{k/d} e^{-4\alpha l/5},$$
provided that $l$ is large enough. 

{\em Case 2.2.} $|\xi_n(z_0,0)|< |z_0|^{k/d} e^{-\alpha l/4}$. Then by Lemma~\ref{p}, $l-n\asymp l$. Since $z_0\in K_{\alpha,m,l}$, we have 
$|\xi_n(z_0,0)|\ge |z_0|^{k/d} e^{-\alpha n}.$
By (\ref{eqn:Dl-1z0r}),
$$|D_{l-1} (z_0) r_1|\succeq l^{-1} |z_0|^{k/d} e^{-\alpha (l+n)/2} \succeq e^{-\alpha \omega l},$$
for a suitably chosen $\omega$, provided that $l$ is large enough. 
%
%
%
%
\end{proof}

For each integer $m_1\ge m$, let
$$K_{\alpha,m,l}^{m_1}=\{z\in K_{\alpha,m,l}: |\lambda|^{m_1+1} r_0\le |z|< |\lambda|^{m_1} r_0\}.$$
\par The following estimate of the volume of $K^{m_1}_{\alpha,m,l}$ is crucial in the proof of Theorem  \ref{slow}.
\begin{lemma}\label{K}
Let $f(z,w)=(\lambda z, w^d+c(z))$ as in (\ref{uni}). Assume $(0,0)$ is not a periodic point. Then for  every $\alpha>0$ satisfying $e^{-d\alpha}>|\lambda|^k $, there exists $\gamma=\gamma(\alpha)>0$ such that  for  $m$ and $l$ large, the following holds for all $m_1\ge m$:
\begin{equation*}
\frac{\text{Vol}\;(K_{\alpha,m,l}^{m_1})}{\text{Vol}\;(B(0,|\lambda|^{m_1}r_0))}\leq e^{-\gamma l}.
\end{equation*}
\end{lemma}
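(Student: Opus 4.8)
The plan is to cover $K_{\alpha,m,l}^{m_1}$ by small disks on which the map $\varphi_l(z)=\xi_l(z,0)$ is univalent with controlled distortion, and then use the expansion supplied by Lemma~\ref{lem:xinz} together with the fact that $K_{\alpha,m,l}$ is contained in the pull-back of a very small ball to get an area bound. Fix $\alpha$ with $e^{-d\alpha}>|\lambda|^k$, and let $m,l$ be large. First I would observe that on $K_{\alpha,m,l}^{m_1}$ we have $|z|\asymp |\lambda|^{m_1} r_0$, and $K_{\alpha,m,l}\subset \widehat{K}_{\alpha,m,l}$, so Lemma~\ref{lem:xinz} applies to every point $z_0\in K_{\alpha,m,l}^{m_1}$: there is a neighborhood $V'=V'(z_0)\subset B(0,2|z_0|)$ mapped conformally by $\varphi_l$ onto $B(\xi_l(z_0,0),\, C|z_0|^{k/d}e^{-\alpha\omega l})$. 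Since on $K_{\alpha,m,l}$ we have $|\xi_l(z,0)|\le |z|^{k/d} e^{-\alpha l}\ll |z_0|^{k/d}e^{-\alpha\omega l}$ (as $\omega<1$ and $l$ is large), the set $K_{\alpha,m,l}\cap V'$ is mapped by $\varphi_l$ into a ball of radius $\le |z_0|^{k/d}e^{-\alpha l}$ that sits well inside the much larger target ball $B(\xi_l(z_0,0),\,C|z_0|^{k/d}e^{-\alpha\omega l})$, centered at distance $O(|z_0|^{k/d}e^{-\alpha l})$ from its center.

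Next, by the Koebe distortion theorem applied to the univalent map $\varphi_l|_{V'}$ (composed with an affine rescaling sending $B(0,C|z_0|^{k/d}e^{-\alpha\omega l})$ to the unit disk), the preimage $(\varphi_l|_{V'})^{-1}\big(B(\xi_l(z_0,0),\, 3|z_0|^{k/d}e^{-\alpha l})\big)$ is contained in a disk around $z_0$ of radius comparable to
$$
\rho_l := r_1 \cdot \frac{|z_0|^{k/d}e^{-\alpha l}}{|z_0|^{k/d}e^{-\alpha\omega l}} \asymp r_1\, e^{-\alpha(1-\omega)l},
$$
where $r_1\asymp |z_0|$ is the radius from the proof of Lemma~\ref{lem:xinz}; the distortion constant is absolute because Koebe distortion depends only on how deep inside the target ball we stay, and here we stay a definite factor $e^{-\alpha(1-\omega)l}$ inside. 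In particular each $z_0\in K_{\alpha,m,l}^{m_1}$ has a neighborhood $W(z_0)\subset B(z_0,\rho_l)$, with $\rho_l \asymp |\lambda|^{m_1} e^{-\alpha(1-\omega)l}$, such that $K_{\alpha,m,l}\cap B(z_0, c\rho_l)\subset W(z_0)$ for a small absolute constant $c$; this last inclusion holds because any point of $K_{\alpha,m,l}$ within $c\rho_l$ of $z_0$ lands under $\varphi_l$ (by the bounded distortion, hence bounded Lipschitz constant of $\varphi_l$ on $B(z_0,c\rho_l)$) within $3|z_0|^{k/d}e^{-\alpha l}$ of $\xi_l(z_0,0)$.

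Now I would apply the Besicovitch (or $5r$) covering lemma to the family $\{B(z_0, c\rho_l/5)\}_{z_0\in K_{\alpha,m,l}^{m_1}}$ — here $\rho_l$ is essentially independent of the center, being $\asymp |\lambda|^{m_1} e^{-\alpha(1-\omega)l}$ — to extract a bounded-overlap subcover $\{B(z_j, c\rho_l)\}_j$ of $K_{\alpha,m,l}^{m_1}$. On each such ball, $K_{\alpha,m,l}\cap B(z_j,c\rho_l)\subset W(z_j)$, and $\mathrm{Vol}(W(z_j)) = \mathrm{Vol}\big((\varphi_l|_{V'})^{-1}(B(\xi_l(z_j,0),3|z_j|^{k/d}e^{-\alpha l}))\big)$; by the Koebe area distortion estimate this is at most a constant times $\rho_l^2 \cdot \big(e^{-\alpha l}/e^{-\alpha\omega l}\big)^2 = \rho_l^2 e^{-2\alpha(1-\omega)l}$. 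Wait — more carefully, the area of $W(z_j)$ compared to the area of $B(z_j,c\rho_l)$ is, by bounded distortion, comparable to the ratio of the target areas, namely $\big(3|z_j|^{k/d}e^{-\alpha l}\big)^2 / \big(C|z_j|^{k/d}e^{-\alpha\omega l}\big)^2 \asymp e^{-2\alpha(1-\omega)l}$. Summing over $j$ with bounded overlap and using $\bigcup_j B(z_j,c\rho_l)\subset B(0, 3|\lambda|^{m_1}r_0)$,
$$
\mathrm{Vol}(K_{\alpha,m,l}^{m_1}) \le \sum_j \mathrm{Vol}(W(z_j)) \le C' e^{-2\alpha(1-\omega)l} \sum_j \mathrm{Vol}(B(z_j,c\rho_l)) \le C'' e^{-2\alpha(1-\omega)l}\, \mathrm{Vol}(B(0,|\lambda|^{m_1}r_0)),
$$
which gives the claim with $\gamma = \alpha(1-\omega)$ (after absorbing constants into a slightly smaller $\gamma$, valid once $l$ is large).

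The main obstacle I anticipate is making the distortion control uniform in $z_0$, $m_1$, $m$ and $l$ simultaneously: one must check that the radius $r_1$ in Lemma~\ref{lem:xinz} is genuinely comparable to $|z_0|$ (not just $\le \varepsilon |z_0|$ — the two cases of that proof give $r_1\asymp|z_0|$ in Case 1 but only $r_1\asymp r$ in Case 2, so one needs a lower bound on $r$, i.e. on the binding time radius, which should follow from Przytycki's Lemma~\ref{p} bounding binding times from below by $\log(1/|z_0|)$-type quantities), and that the Koebe constants do not degenerate. A secondary technical point is ensuring $K_{\alpha,m,l}\cap B(z_0,c\rho_l)$ really is captured inside $W(z_0)$, which requires the Lipschitz bound on $\varphi_l$ over the full ball $B(z_0,c\rho_l)$ and not merely at the center — again a consequence of bounded distortion once $c$ is chosen small. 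Everything else is routine covering-lemma bookkeeping.
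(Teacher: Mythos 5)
Your overall strategy is the same as the paper's: use Lemma~\ref{lem:xinz} to produce, around each $z_0\in K_{\alpha,m,l}^{m_1}$, a univalent branch of $\varphi_l$ onto a ball of radius $\asymp|z_0|^{k/d}e^{-\alpha\omega l}$, observe that $K_{\alpha,m,l}$ lands in the much smaller concentric ball of radius $\asymp|z_0|^{k/d}e^{-\alpha l}$, and convert the ratio $e^{-\alpha(1-\omega)l}$ of radii into an area bound via Koebe distortion. The difference is in the globalization, and that is where your argument breaks. Your covering balls $B(z_j,c\rho_l)$ live at the \emph{small} scale $\rho_l\asymp r_1e^{-\alpha(1-\omega)l}$, and at that scale there is no density gain: by Koebe distortion both $\mathrm{Vol}(W(z_j))$ and $\mathrm{Vol}(B(z_j,c\rho_l))$ are comparable to $\rho_l^2$, so the inequality $\mathrm{Vol}(W(z_j))\le C'e^{-2\alpha(1-\omega)l}\,\mathrm{Vol}(B(z_j,c\rho_l))$ in your final display is false. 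The ``ratio of target areas'' $e^{-2\alpha(1-\omega)l}$ compares $W(z_j)$ with $V'(z_j)$ (whose images are the small and the large target balls respectively), not with $B(z_j,c\rho_l)$, whose image also sits inside the small target ball. With the correct comparison $\mathrm{Vol}(W(z_j))\asymp\mathrm{Vol}(B(z_j,c\rho_l))$, your sum only yields $\mathrm{Vol}(K_{\alpha,m,l}^{m_1})\le C\,\mathrm{Vol}(B(0,|\lambda|^{m_1}r_0))$, with no exponential decay.

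The repair is to run the covering at the \emph{large} scale. Note that already $K_{\alpha,m,l}\cap B(z_j,r_1(z_j))\subset W(z_j)$: any $z\in K_{\alpha,m,l}$ lying in the univalent disk $B(z_j,r_1)$ satisfies $|\varphi_l(z)|\le (2|z_j|)^{k/d}e^{-\alpha l}$, hence lies in $V'(z_j)$ and in $W(z_j)$ --- no Lipschitz estimate is needed, so your ``secondary technical point'' is vacuous. Then extract by Vitali a subfamily with $B(z_j,r_1/5)$ pairwise disjoint and $B(z_j,r_1)$ still covering $K_{\alpha,m,l}^{m_1}$, and use $\mathrm{Vol}(W(z_j))\le Ce^{-2\alpha(1-\omega)l}r_1(z_j)^2\le C'e^{-2\alpha(1-\omega)l}\mathrm{Vol}(B(z_j,r_1/5))$; this also disposes of your worry about whether $r_1\asymp|z_0|$, since no lower bound on $r_1$ is needed. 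The paper avoids the covering lemma altogether by taking the $V_j'$ to be the connected components of $\varphi_l^{-1}$ of one fixed ball $B(0,|\lambda|^{km_1/d}e^{-\alpha\omega l})$ contained in $B(0,2|\lambda|^{m_1}r_0)$; these are automatically pairwise disjoint, and the same Koebe estimate applied to each of them finishes the proof.
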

\begin{proof}
Fix $m_1\ge m$. Let $V_j'$, $j=1,2,\ldots$, be the connected components of the set $\varphi_l^{-1}(B(0, |\lambda|^{km_1/d} e^{-\alpha\omega l})$ such that $V_j'\subset B(0,2|\lambda|^{m_1}r_0)$ and such that $$\varphi_l: V_j'\to B(0, |\lambda|^{km_1/d} e^{-\alpha\omega l})$$ is a conformal map. Let $$V_j=\{z\in V_j': |\varphi_l(z)|\le |\lambda|^{km_1/d} e^{-\alpha l}\}.$$
By the Koebe distortion theorem,
$$\frac{\text{area}(V_j)}{\text{area}(V'_j)}\le C e^{-2\alpha (1-\omega) l}.$$
The previous lemma implies that $K_{\alpha,m,l}^{m_1}\subset \bigcup_j V_j$.
Since $V_j'$ are pairwise disjoint, we obtain the desired estimate.
\end{proof}
%
%
\medskip
\par Now we can prove Theorem  \ref{slow}.
\medskip
\par {\em Proof of Theorem  \ref{slow}:} It is sufficient to show that for every $\alpha>0$ small, and for every $\varepsilon>0$, we have $\text{Vol} \;(B(0,r_0)\setminus \Lambda_{2\alpha})<\varepsilon$. 
We shall use Lemma \ref{set} to estimate $\text{Vol} \;(B(0,r_0)\setminus \Lambda_{2\alpha})$. By Lemma \ref{p}, there exits a constant $\theta=\theta(\alpha)>0$ such that for every $m\geq 0$ and every $z\in B(0,|\lambda|^mr_0)$, $|\xi_l(z)|> e^{-\alpha l}|\lambda|^{km/d}$ provided $l\leq \theta m$. In other word, $K_{\alpha,m,l}=\emptyset$ provided $l\leq \theta m$. By Lemma~\ref{K},
$$\text{area}(K_{\alpha,m,l})\le \sum_{m_1=m}^\infty e^{-\alpha \gamma l} \text{area} (B(0, |\lambda|^{m_1} r_0))\le Ce^{-\alpha \gamma l} \text{area} (B(0, |\lambda|^m r_0)).$$
Thus by Lemma \ref{set} (2), for $m$ large enough we have
\begin{equation*}
\text{Vol}\;(B(0,|\lambda|^mr_0)\setminus \Omega_{\alpha,m})\leq \sum_{l=\theta m}^{+\infty} \text{Vol}\;(K_{\alpha,m,l})\leq Qe^{-\gamma \theta m} \text{Vol}\;(B(0,|\lambda|^mr_0),
\end{equation*}
where $Q:=C \sum_{l=1}^{+\infty} e^{-\gamma  l}$.
\medskip
\par Thus by Lemma \ref{set} (1), for $N$ large enough we have
\begin{align*}
\text{Vol}\;(B(0,r_0)\setminus \Lambda_{2\alpha})&\leq \sum_{m=N}^{+\infty} |\lambda|^{-2m}\text{Vol}\;(B(0,|\lambda|^mr_0)\setminus \Omega_{\alpha,m})\\
&\leq \sum_{m=N}^{+\infty} |\lambda|^{-2m}Qe^{-\gamma \theta m} \text{Vol}\;(B(0,|\lambda|^mr_0)\\
&= \sum_{m=N}^{+\infty} Q\pi|r_0|^2e^{-\gamma \theta m}  \leq \varepsilon.
\end{align*}
\medskip
\par The conclusion follows.
\qed
\bigskip
\section{Non-wandering Fatou components}
In this section we prove Theorem \ref{main}. The strategy is the following. We will first show that if a vertical disk $D\subset L_{z_0}$ centered at $x_0=(z_0,w_0)$ is very close to the invariant line $L$ (which means that the radius of $D$ is much larger than $|z_0|$), then for every $n\geq 0$, $f_0^n(w_0)\in \xi_n(D)$. The proof of this proposition is again by using the binding argument. Now assume by contradiction there is a wandering Fatou component, by Theorem \ref{thm:derivatives} and Theorem \ref{slowapp} we can select a vertical disk $D$ contained in a wandering Fatou component,  which is very close to the invariant line $L$. Finally   we will get a contradiction.
\medskip
\par More precisely, we first prove the following result.
\begin{proposition}\label{radius}
Let $f(z,w)=(\lambda z, w^d+c(z))$ be as in (\ref{uni}) such that $f_0$ has no attracting nor superattracting cycle in $\mathbb{C}$. There exists $\rho>0$ such that the following hold. Let $D\subset L_{z_0}$  be a vertical disk centered at $x=(z_0,w_0)$ of radius $\delta\in (0, \rho)$ with $|z_0|<\delta^{2d}$.  Then for every $0<\lambda_0<1$, there is a constant $C=C(\lambda_0)>0$  such that for each $n\geq 1$,
\begin{equation*}
B(f_0^n(w_0), C\lambda_0^n\delta)\subset \xi_n(D).
\end{equation*}
\end{proposition}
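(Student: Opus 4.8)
The plan is to run a binding argument comparing the vertical disk $D\subset L_{z_0}$ with the invariant line $L=\{z=0\}$, tracking how much of $D$ is "consumed" by the iterates before the orbit of $x$ and the orbit of $(0,w_0)$ separate. Fix $\lambda_0\in(0,1)$, and for each point $y\in D$ consider the pair $(y,(0,w_0))$; since $|z_0|$ is extremely small relative to $\delta$ (we have $|z_0|<\delta^{2d}$), the binding time of such pairs will be large. I would first treat the point $x=(z_0,w_0)$ itself: let $s=b_{\mu_0}(x,(0,w_0))$ be its binding time. For $0\le j<s$ we have $\tfrac12|\xi_j(0,w_0)|\le |\xi_j(x)|\le 2|\xi_j(0,w_0)|$ and, by Lemma~\ref{*}, $|Df^j(x)(v)|\ge \tfrac12|Df_0^j(w_0)|$. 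Using Proposition~\ref{prop:1-dim-exp} (the one-dimensional lower bound $|Df_0^j(w_0)|\ge C\lambda_0^j\min_{i<j}|f_0^i(w_0)|^{d-1}$) together with the estimate on $W((0,w_0),(z_0,w_0),j)$ from \eqref{eqn:W00}, I would bound $W$ by something of the order $|z_0|^k/\text{(products of derivatives)}$, which is negligible compared to $\delta$. Then Lemma~\ref{**} shows that as long as $n\le s$, the image $\xi_n(D)$ of a disk of radius $\delta$ still contains a disk around $\xi_n(x)$ of radius comparable to $|Df^n(x)(v)|\delta / (\text{distortion})$, and this is at least $C\lambda_0^n\delta$; moreover $\xi_n(x)$ is within $\tfrac12|\xi_n(x)|$ of $f_0^n(w_0)$, so a slightly smaller disk around $f_0^n(w_0)$ is contained in $\xi_n(D)$. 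Actually one needs to be a bit more careful about distortion of the multivalent map $\xi_n|_D$; I would instead argue that the vertical map $w\mapsto\xi_n(z_0,w)$ restricted to a subdisk $D'\subset D$ of radius $\delta/2$ is univalent with bounded distortion (comparing again to $f_0^n$ via the binding estimate), and apply the Koebe $1/4$-theorem.

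The second ingredient handles what happens after the binding time $s$, if $s<n$. At time $s$, by the stopping rule, $|\xi_s(x)-\xi_s(0,w_0)|\ge \mu_0\min(|\xi_s(x)|,|\xi_s(0,w_0)|)/(s+1)^2$, so $\xi_s(D)$ already contains a definite-size disk around $\xi_s(x)$ — of radius $\gtrsim |\xi_s(x)|/(s+1)^2$ times the original ratio, but more usefully: $|Df^s(x)(v)|\delta \gtrsim \mu_0|\xi_s(x)|\delta/((s+1)^2 W)$, and one checks this beats $\lambda_0^s \cdot(\text{something})$. From here I would iterate the argument: once $\xi_s(D)$ contains a disk of radius on the order of a fixed fraction of $|\xi_s(x)|$ centered near $\xi_s(x)=w_s$, I re-apply the binding argument with the new base point $(z_s,w_s)$ versus $(0,\xi_s(x))$ — i.e. decompose the orbit into binding blocks $0=m_0<m_1<\cdots<m_q\le n$, where each $m_{i+1}-m_i$ is the binding time of $(f^{m_i}(x),(0,\xi_{m_i}(x)))$, and multiply the derivative lower bounds from Lemma~\ref{**} across the blocks, using Theorem~\ref{thm:derivatives} to control $|Df^i(x)(v)|$ on each block (the orbit of $x$ is tame along the way because $D$ small and $|z_0|$ tiny force $|w_i|^d\ge|z_i|^k$). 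The product telescopes to give $|Df^n(x)(v)|\ge C\lambda_0^n$ (up to the $\min|w_i|^{d-1}$ factor, which on each block is controlled since we only stop a block when the orbit makes a new minimum or leaves the binding region), and Koebe on $\xi_n|_{D'}$ converts this into the ball $B(f_0^n(w_0),C\lambda_0^n\delta)\subset\xi_n(D)$.

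The condition $|z_0|<\delta^{2d}$ is exactly what is needed to start: it guarantees that when $\xi_n(D)$ first grows to a disk of definite radius around $w_n$, the "error" coming from the $z$-motion — governed by the terms $|c(\lambda^{i-1}z_0)-c(0)|\le 2|z_0|^k$ in $W$ — is swamped, because $W\lesssim |z_0|^k \ll \delta^d \le \delta$, so the fiber map is a small perturbation of $f_0^n$ on the relevant scale.

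The main obstacle I anticipate is the bookkeeping of distortion across many binding blocks: each block contributes a multiplicative constant close to $1$ (from Lemma~\ref{*}, the ratio is within $\frac12$ of $1$), but there can be $\sim n$ blocks, so naively the product of these constants could be exponentially large and destroy the $\lambda_0^n$ bound. The resolution — and the delicate point — is that the constants in Lemma~\ref{*} are not "$1+\varepsilon$" per block but rather $1+O(\mu_0/m^2)$ summed over \emph{all} iterates inside the blocks, because the definition of binding time has the $(n+1)^2$ denominator; this is precisely why the fixed constants $\mu_1<\mu_0$ were chosen in \S2 so that $\sum 2\mu_0/i^2<1/(4d)$. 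So the total distortion is bounded independent of $n$, and one must organize the argument to exploit \eqref{eqn:mu0mu1} and the global (not per-block) smallness of $\sum 2\mu_0/i^2$ rather than re-incurring a loss at each of the $q$ blocks. A secondary technical point is ensuring that $f_0^n(w_0)$ itself (and not merely $\xi_n(x)=w_n$) is the center: this follows because $|\xi_n(x)-f_0^n(w_0)|$ is either $\le \tfrac12|\xi_n(x)|$ (within a binding block) or controlled by the already-established lower bound on $|Df^n(x)(v)|$ times $W$ (across blocks), in both cases small compared with $C\lambda_0^n\delta$ after possibly shrinking $C$.
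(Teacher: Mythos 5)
Your proposal has a genuine gap at its core step. You reduce the proposition to the two claims that $|Df^n(x)(v)|\ge C\lambda_0^n$ and that $\xi_n$ is univalent with bounded distortion on a subdisk of definite radius, so that Koebe converts the derivative bound into the inclusion of a ball of radius $C\lambda_0^n\delta$. Neither claim survives a passage of the $f_0$-orbit of $w_0$ near the critical point $w=0$. Theorem~\ref{thm:derivatives} gives $|Df^n(x)(v)|\ge C\lambda_0^n\min_{i<n}|w_i|^{d-1}$, and the clean bound $C\lambda_0^n$ only at times $n$ where $|w_n|\le |w_j|$ for all $j<n$; at a time $n$ just after a close approach the factor $\min_i|w_i|^{d-1}$ is genuinely small and cannot be waved away by the parenthetical remark about stopping blocks at new minima, because the proposition must hold at \emph{every} $n$, including those immediately following the close approach. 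Worse, at such a time the disk $\xi_t(D)$ (which by induction has radius $\sim\lambda_0^t\delta$) contains the critical point $0$, so $\xi_{t+1}|_D$ is $d$-to-one there and no univalent restriction of comparable radius exists: the image ball collapses from radius $\rho$ to radius $\sim\rho^d$. Your diagnosis of the "main obstacle" (accumulating distortion constants over binding blocks) is not where the difficulty lies.

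The paper's proof is organized precisely around this loss-and-recovery mechanism, which is absent from your outline. Lemma~\ref{lemma5} records the loss at a critical passage ($\delta_1\asymp\delta^d$, or $\asymp|w_0|^{d-1}\delta$), and then invokes Lemma~\ref{lem:bdcv} — which packages Proposition~\ref{prop:1-dim-exp}(ii) and Lemma~\ref{**} into an expansion estimate of the form $|Df^n(z_1,w_1)(v)|\ge\lambda_0^n\min\bigl(|w_1-c(0)|^{-(d-1)/d},|z_0|^{-k(d-1)/d}\bigr)$ over the return time $n$ — so that the product $|Df^n|\cdot\delta_1$ climbs back to $\lambda_0^{n+1}\delta$ by the time the orbit can return to the critical region (the return time being long by Przytycki's Lemma~\ref{p}). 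Lemma~\ref{lemma4} handles the stretches away from the critical point, and the proof of the proposition alternates the two. At intermediate times the radius obtained is only $C\lambda_0^j\delta^d$, not $C\lambda_0^j\delta$; this weaker intermediate control is an unavoidable feature of the critical passages, and any correct proof must account for it. Your binding-block telescoping and the comparison with $f_0^n(w_0)$ via $W$ are sound ingredients (and do appear in the paper's Lemmas~\ref{lemma4} and~\ref{lemma5}), but without the recovery estimate of Lemma~\ref{lem:bdcv} the argument does not close.
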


\par The proof of this proposition will be given after we prove two lemmas.
\begin{lemma}\label{lemma4} For any $\delta_0>0$ and $\lambda_0\in (0,1)$,
there exist positive integers  $N=N(\delta_0,\lambda_0)$, $\rho=\rho(\delta_0,\lambda_0)>0$ and $C=C(\delta_0,\lambda_0)>0$  such that for every $w_0\in \mathbb{C}$ such that the $f_0-$orbits of $w_0$ is bounded, the following hold. 
\begin{enumerate}
\item If $|f_0^i(w_0)|>\delta_0/2$ for every $0\leq i< N$ and $|z_0|^k\le \delta^2$ for some $\delta\in (0, \rho)$, then
\begin{equation*}
B(f_0^N(w_0), \lambda_0^N\delta)\subset \xi_N(\{z_0\}\times B(w_0, \delta)),
\end{equation*}
and
\begin{equation*}
B(f_0^j(w_0), C\lambda_0^j\delta)\subset \xi_j(\{z_0\}\times B(w_0, \delta)), \,\, 1\le j<N.
\end{equation*}
\item If $n\le N$ is a positive integer such that $|f_0^i(w_0)|>\delta_0/2$ for all $0\le i<n$ and $|f_0^n(w_0)|\le \delta_0/2$, and $|z_0|^k\le \delta^2$ for some $\delta\in (0, \rho)$, then
\begin{equation*}
B(f^n(w_0), \kappa_0\lambda_0^n \delta/4)\subset \xi_n(\{z_0\}\times B(w_0,\delta)),
\end{equation*}
where $\kappa_0=\kappa_0(\lambda_0)>0$ is a constant, and
\begin{equation*}
B(f_0^j(w_0), C\lambda_0^j\delta)\subset \xi_j(\{z_0\}\times B(w_0, \delta)), \,\, 1\le j<n.
\end{equation*}
\end{enumerate}
\end{lemma}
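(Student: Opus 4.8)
The plan is to compare the vertical map $w\mapsto \xi_j(z_0,w)$ on the small disk $\{z_0\}\times B(w_0,\delta)$ with the one-dimensional map $f_0^j$ restricted to $B(w_0,\delta)$, using the binding argument together with Proposition~\ref{prop:1-dim-exp}. First I would fix $\delta_0$ and $\lambda_0$, pick $\lambda_0'\in(\lambda_0,1)$, and use Proposition~\ref{prop:1-dim-exp}(i) to obtain $\kappa=\kappa(\lambda_0',\delta_0)>0$ so that $|Df_0^j(w)|\ge\kappa(\lambda_0')^j$ for any $f_0$-orbit with $|f_0^i(w)|\ge\delta_0/2$, $0\le i<j$; then choose $N$ large so that $\kappa(\lambda_0')^N>\lambda_0^N$ (this fixes the constant $N$ and, after shrinking, the constant $C$ in both items). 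The key quantitative input is Lemma~\ref{**}: for any $y=(z_0,w)$ with $w\in B(w_0,\delta)$ and any $j$ not exceeding the binding time of $(y,(0,w))$ [or more precisely comparing $\xi_j(z_0,w)$ with $f_0^j(w)$], one controls $|\xi_j(z_0,w)-f_0^j(w)|$ by $W(\,\cdot\,,\cdot\,,j)\cdot|Df^j(z_0,w)(v)|$, and the term $W$ is bounded by $C|z_0|^k\le C\delta/4$ because the relevant derivatives are bounded below by $\kappa(\lambda_0')^i$ and $\sum_i |\lambda|^{ik}(\lambda_0')^{-i}$ converges (here $|z_0|^k\le\delta/4$ and $N$ being a fixed finite number of steps makes this automatic). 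Thus the "error" between the two-dimensional and one-dimensional vertical images is of order $\delta\cdot(\text{small})$, negligible compared with the sizes $\lambda_0^j\delta$ we are claiming.

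For item (1): by Proposition~\ref{prop:1-dim-exp}(i) and the Koebe/quarter theorem argument, $f_0^N$ maps $B(w_0,\delta)$ over a disk of radius at least $\tfrac14|Df_0^N(w_0)|\delta\ge\tfrac14\kappa(\lambda_0')^N\delta$ around $f_0^N(w_0)$; since $|\xi_N(z_0,w)-f_0^N(w)|$ is at most a small multiple of $\delta$ uniformly on $B(w_0,\delta)$, a degree/winding-number (Rouché-type) argument on the boundary circle shows that $\xi_N(\{z_0\}\times B(w_0,\delta))$ still covers $B(f_0^N(w_0),\lambda_0^N\delta)$, using $\kappa(\lambda_0')^N>\lambda_0^N$ with room to spare. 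The intermediate estimates $B(f_0^j(w_0),C\lambda_0^j\delta)\subset\xi_j(\{z_0\}\times B(w_0,\delta))$ for $1\le j<N$ follow the same way: for each of the finitely many $j<N$ the derivative $|Df_0^j(w_0)|$ is bounded below by a constant depending only on $\delta_0,\lambda_0$, so choosing $C$ small (again depending only on $\delta_0,\lambda_0$) handles all of them at once. For item (2), $n\le N$ is the first return time below $\delta_0/2$; here I use Proposition~\ref{prop:1-dim-exp}(iii) (equivalently the constant $\kappa_0=\kappa_0(\lambda_0)$) to get $|Df_0^n(w_0)|\ge\kappa_0(\lambda_0')^n$, and run the identical covering argument, now landing a disk of radius $\tfrac14\kappa_0(\lambda_0')^n\delta\ge(\kappa_0/4)\lambda_0^n\delta$ around $f_0^n(w_0)$; the intermediate $j<n$ estimates are as before.

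The main obstacle is making the Rouché/degree comparison rigorous with the correct constants: one must ensure that on the boundary circle $\partial B(w_0,\delta)$ the perturbation $|\xi_j(z_0,w)-f_0^j(w)|$ is strictly smaller than the distance from $f_0^j(\partial B(w_0,\delta))$ to the target ball $B(f_0^j(w_0),C\lambda_0^j\delta)$ — this is where the gap $\kappa(\lambda_0')^j$ versus $\lambda_0^j$ (respectively $\kappa_0(\lambda_0')^n$ versus $\lambda_0^n$) is spent, and where one needs $|z_0|^k\le\delta/4$ (not merely $|z_0|$ small) so that $W\le C\delta$ scales correctly with $\delta$. A secondary technical point is that $\xi_j(z_0,\cdot)$ need not be univalent on $B(w_0,\delta)$; but univalence is not needed — only surjectivity onto the claimed ball, which the winding-number argument delivers directly. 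The boundedness of the $f_0$-orbit of $w_0$ is used only to keep all the relevant points in a fixed compact set so that the higher-order terms of $c$ and the implied constants are uniform.
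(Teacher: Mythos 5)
Your proposal is correct and follows essentially the same route as the paper: fix $N$ via Proposition~\ref{prop:1-dim-exp}(i) so that $\kappa\lambda_0'^{\,N}$ beats $\lambda_0^N$ (and use part (iii), i.e.\ the constant $\kappa_0$, for the first-return case), bound the discrepancy between $\xi_j(z_0,\cdot)$ and $f_0^j$ by the quantity $W$ of Lemma~\ref{**} using $|z_0|^k\le\delta/4$, and conclude the covering statement. The paper obtains the covering by comparing vertical derivatives over the whole disk via the binding time (Lemma~\ref{*}) instead of your Koebe-plus-Rouch\'e step, but this is only a cosmetic difference.
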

\begin{proof}
Without loss of generality, we may assume $\lambda_0\in (|\lambda|,1)$. Let $\kappa=\kappa(\lambda_0^{1/2},\delta_0/2)$ and $\kappa_0=\kappa_0(\lambda_0^{1/2})$ be given by Proposition~\ref{prop:1-dim-exp}. Choose $N$ such that if $\kappa\lambda_0^{N/2}>4 \lambda_0^{N}$.
Let $D=\{z_0\}\times B(w_0, \delta)$.
For every $x\in D$, let $s_x$ be the $\mu_0$-binding time of the pair $(x,(0, w_0))$. Let $s=\min_{x\in D} s_x$. Provided that $\delta$ is small enough, we have $s>N$.
%

Lemma \ref{*}  implies that $w\mapsto \xi_m(z_0, w)$ is univalent in $B(w_0, \delta)$ and hence $\xi_m(D)$ contains a disk centered at $\xi_m(x_0)$ with radius at least $R_m:=1/2 |(f_0^m)'(w_0)|\delta$.  Let $s_1=N$ in Case (1) and $s_1=n$ in Case (2). Let $x_0=(z_0,w_0)$ and $y_0=(0,w_0)$. By (\ref{eqn:W00}) and Proposition \ref{prop:1-dim-exp} we have
$$W(x_0,y_0, s_1)\le \sum_{i=1}^{s_1}  \frac{4|\lambda^{k(i-1)}z_0^k|}{|(f_0^i)'(w_0)|}\le C |z_0|^k,$$
where $C$ is a constant depending on $\lambda_0$ and $\delta_0$.
By Lemma \ref{**} ,
\begin{align*}
|\xi_{s_1}(x_0)-f_0^{s_1}(w_0)|&\leq |(f_0^{s_1})'(w_0)|W(x_0,y_0,s_1)\le C|(f_0^{s_1})'(w_0)| |z_0|^k< \frac{R_{s_1}}{2},
\end{align*}
provided that $\delta$ is small enough. 
Therefore, $$\xi_{s_1}(D) \supset B(f_0^{s_1}(w_0), R_{s_1}/2).$$
The lemma follows since in Case (1) we have $|Df_0^{s_1}(w_0)|> \lambda_0^N$ and in Case (2) we have $|Df_0^{s_1}(w_0)|\ge \kappa_0 \lambda_0^n$ and in both cases, $|Df_0^j(w_0)|\ge C\lambda_0^j$ for $1\le j<s_1$.
\end{proof}
\begin{lemma}\label{lemma5}
For each $\lambda_0\in (0,1)$ and each $K>0$, there exist constants $C=C(\lambda_0, K)>0$ and $\delta_0>0$ such that the following holds. Assume $|w_0|\leq \delta_0$ and $|z_0|^k< \delta^{d+1}$ for some $0<\delta<\delta_0$.  Then there exists a positive integer $n$ such that
\begin{equation*}
B(f_0^{n+1}(w_0), K\lambda_0^{n+1}\delta)\subset \xi_{n+1}(\{z_0\}\times B(w_0, \delta)),
\end{equation*}
and
\begin{equation*}
B(f_0^j(w_0), C\lambda_0^{j}\delta^d)\subset \xi_j(\{z_0\}\times B(w_0, \delta)), \,\, 1\le j\le n.
\end{equation*}
\end{lemma}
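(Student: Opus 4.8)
The plan is to treat the passage through the critical region in two stages. In the first stage a single application of $f$ carries the centre of the vertical disc $D=\{z_0\}\times B(w_0,\delta)$ to a point $w_1=w_0^d+c(z_0)$ that lies very close to the critical value $c(0)=f_0(0)$, while shrinking the disc to radius of order $\delta^d$; in the second stage one binds the resulting orbit to the critical-value orbit $(0,c(0))=f(0,0)$ and lets the disc re-expand, the gain of vertical derivative being supplied by Lemma~\ref{lem:bdcv}. Throughout write $x_j=(z_j,w_j)=f^j(x_0)$.

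For the first stage I would use $|w_0|\le\delta_0$ and elementary properties of $w\mapsto w^d$ to get $\xi_1(D)=\{\,w^d+c(z_0):|w-w_0|<\delta\,\}\supseteq B(w_1,\rho_1)$ with $\rho_1\ge c_d\max(|w_0|,\delta)^{d-1}\delta\ge c_d\delta^d$ for a dimensional constant $c_d>0$. Since $|w_1-f_0(w_0)|=|c(z_0)-c(0)|\le 2|z_0|^k$ is negligible compared with $\rho_1$ by the hypothesis relating $|z_0|$ to $\delta$, one also gets $\xi_1(D)\supseteq B(f_0(w_0),\rho_1/2)$, which is the asserted inclusion at $j=1$. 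Set $\delta_1:=\rho_1/2$ and $D_1:=\{z_1\}\times B(w_1,\delta_1)$, so $f(D)\supseteq D_1$.

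For the second stage, note $|w_1-c(0)|\le|w_0|^d+2|z_0|^k$ is small, so $\widehat\delta:=\max(|w_1-c(0)|,|z_0|^k)^{1/d}\in(0,\delta_0)$. Fixing $\lambda_1\in(|\lambda|,1)$ and applying Lemma~\ref{lem:bdcv}, with $s$ the binding time $b_{\mu_0}((z_1,w_1),(0,c(0)))$, produces a positive integer $n\le s$ with $|w_j|\ge\widehat\delta$ for $1\le j\le n$ and
$$|Df^{\,n}(x_1)(v)|\ \ge\ \lambda_1^{\,n}\,\max\!\big(|w_1-c(0)|,|z_0|^k\big)^{-(d-1)/d};$$
this $n$ is the integer required by the lemma. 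Because $j\le s$ for $1\le j\le n$, the definition of binding time gives $0\notin\xi_{j-1}(D_1)$ — the radius $\delta_1\asymp\delta^d$ being far smaller than $\mathrm{dist}(\xi_{j-1}(x_1),0)$, which stays bounded away from $0$ along the binding period since $(0,0)$ is not periodic and $f_0$ has no attracting cycle, with Proposition~\ref{prop:levin} invoked when $s=\infty$. Hence $\xi_j$ is univalent on $D_1$, and exactly as in the proofs of Lemmas~\ref{lemma4} and~\ref{**}, using $W((0,c(0)),(z_1,w_1),j)\le 2|w_1-c(0)|+4|z_0|^k\sum_{i\ge1}|\lambda|^{ik}/|Df_0^{\,i}(c(0))|$, which converges by Proposition~\ref{prop:levin}, one obtains $\xi_j(D_1)\supseteq B\big(f_0^{\,j+1}(w_0),\tfrac14|Df^{\,j}(x_1)(v)|\,\delta_1\big)$ for $1\le j\le n$ after re-centring by the negligible displacement $|\xi_j(x_1)-f_0^{\,j+1}(0)|+|f_0^{\,j+1}(0)-f_0^{\,j+1}(w_0)|$.

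It remains to read off the two families of inclusions. For $1\le j<n$ one has $j\le s$, so Lemma~\ref{*} together with Proposition~\ref{prop:levin}(1) (and $(f_0^{\,j})'(c(0))\neq0$, from non-periodicity) give $|Df^{\,j}(x_1)(v)|\ge\tfrac12|Df_0^{\,j}(c(0))|\ge C(\lambda_0)\lambda_0^{\,j}$ for every $\lambda_0\in(0,1)$; combined with $\delta_1\ge c_d\delta^d$ this yields $\xi_{j+1}(D)\supseteq B(f_0^{\,j+1}(w_0),C\lambda_0^{\,j+1}\delta^d)$, which with the first iterate gives the intermediate claim for $1\le j\le n$. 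For the final inclusion, at $j=n$ insert the displayed lower bound for $|Df^{\,n}(x_1)(v)|$ together with $\delta_1\ge c_d\max(|w_0|,\delta)^{d-1}\delta$ and $\max(|w_1-c(0)|,|z_0|^k)\le C\max(|w_0|,\delta)^d$ (valid since $|z_0|^k$ is small relative to $\delta^d$): the two powers of $\max(|w_0|,\delta)$ cancel, leaving $\xi_{n+1}(D)\supseteq B(f_0^{\,n+1}(w_0),c\lambda_1^{\,n}\delta)$, whence the required inclusion by taking $\lambda_1$ close enough to $1$ and using $n\le s$ to absorb the constant into the extra factor of $\lambda_0$. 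The main obstacle is this transport step: one must rule out $D_1$ meeting the critical fibre $\{w=0\}$ before time $n+1$ — which is exactly why $n$ is taken to be the binding-time-bounded integer of Lemma~\ref{lem:bdcv} and not larger, since the non-overlap and distortion estimates degrade once binding ends — and one must carry out the cancellation of the first-iterate compression against the Lemma~\ref{lem:bdcv} expansion uniformly in the position of $w_0$ relative to $\delta$ and in whether the binding period is finite.
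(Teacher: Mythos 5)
Your overall architecture matches the paper's: one iterate compresses $D=\{z_0\}\times B(w_0,\delta)$ to a disc $D_1\supset B(w_1,\delta_1)$ with $\delta_1\asymp\max(|w_0|,\delta)^{d-1}\delta$, you bind the resulting orbit to the critical-value orbit of $(0,c(0))$, invoke Lemma~\ref{lem:bdcv} to find the time $n$ at which the vertical derivative has recovered the factor $\max(|w_1-c(0)|,|z_0|^k)^{-(d-1)/d}$, and control the recentring error via the quantity $W$. The intermediate inclusions for $1\le j\le n$ via Lemma~\ref{*} and Proposition~\ref{prop:levin}(1) are also as in the paper.

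However, there is a genuine gap at exactly the step you flag as "the main obstacle". You take $s=b_{\mu_0}((z_1,w_1),(0,c(0)))$, the binding time of the \emph{centre} of $D_1$ only, and then claim that $j\le s$ forces $0\notin\xi_{j-1}(D_1)$ because "the radius $\delta_1\asymp\delta^d$ is far smaller than $\mathrm{dist}(\xi_{j-1}(x_1),0)$". This conflates the initial radius of $D_1$ with the radius of its $(j-1)$-st image: the latter grows like $|Df^{j-1}(x_1)(v)|\,\delta_1$ (indeed the whole point of the lemma is that it grows back to order $\delta\gg\delta_1$), so it need not stay below $|w_j|$. More fundamentally, the binding time of the centre gives no control whatsoever over the other points of $D_1$: a point $y\in D_1$ can unbind from $(0,c(0))$ strictly before the centre does, after which $\xi_i(D_1)$ may meet $w=0$ and the univalence/Koebe argument behind $\xi_j(D_1)\supset B(\xi_j(x_1),\tfrac14|Df^j(x_1)(v)|\delta_1)$ collapses. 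The paper avoids this by defining $s$ as the \emph{infimum} of the $\mu_1$-binding times over all $(z,w)$ with $|z|\le|\lambda z_0|$ and $w\in D_1$; then every point of the disc stays bound up to time $s$, which yields $0\notin\xi_{j-1}(D_1)$ and the distortion control for all $j\le s$, and Lemma~\ref{lem:bdcv} is applied to a point $(z_1',w_1')$ \emph{realizing} this infimum (the choice $\mu_1<\mu_0$ and (\ref{eqn:mu0mu1}) then let one transfer the expansion from $(z_1',w_1')$ back to the centre via Lemma~\ref{*}). Your proof needs this replacement of "binding time of the centre" by "infimum of binding times over the disc"; as written, the transport step is not justified.
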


\begin{proof} 
Fix constants $\lambda_0\in (0,1)$, $K>0$ and a large positive integer $s_0$ such that $\lambda_0^{-s_0/2}\ge C_3^{-1}K$, where $C_3$ is a constant to be determined below. 
We may and will assume $\lambda_0>|\lambda|$.

For each $j\ge 0$, write $x_j=(z_j, w_j)=f^j(z_0,w_0)$ and 
$$D_j=f^j(\{z_0\}\times B(w_0,\delta))\subset \{z_j\}\times \mathbb{C}.$$
Then $D_1=\{z_1\}\times \xi_1(D_0) \supset \{z_1\}\times B(w_1, \delta_1)$ where
\begin{equation}\label{eqn:delta1}
\delta_1=\theta \delta \max(|w_0|,\delta)^{d-1}.
\end{equation}
for some uniform constant $\theta>0$. 
Let $\mu_1$ be as in (\ref{eqn:mu1}) and let $$s=\inf\{b_{\mu_1}((z,w), (0, c(0))): |z|\le |\lambda||z_0|, w\in \xi_1(D_0)\}.$$

{\bf Claim 1.} Provided that $\delta_0$ is small enough, for each $1\le j\le s$, 
$$\xi_{j+1}(D_0)\supset B(f_0^{j+1}(w_0), R_{j+1}/4),$$
where $$R_{j+1}=|Df^j(z_1,w_1)(v)|\delta_1.$$

Indeed, as in the proof of the previous lemma, for each integer $0\le j\le s$,
$\xi_{j+1}(D_0)\supset \xi_j(D_1)\supset B(\xi_{j+1}(x_0), R_{j+1}/2)$. So it suffices to prove 
$$|\xi_{j+1}(x_0)-f_0^{j+1}(w_0)|\le R_{j+1}/4.$$
To prove this, we first apply Proposition~\ref{prop:levin} (1) to obtain a constant $C_1>0$ such that 
\begin{equation}\label{eqn:df0c0}
|Df_0^i(c(0))|\ge 2C_1\lambda_0^i\mbox{ for all }i\ge 0.
\end{equation}  
By (\ref{eqn:W00}), 
$$W((z_1,w_1), (0, w_1),j)\le  4\sum_{i=1}^j \frac{|\lambda|^{(i-1)k}|z_1|^k}{|Df_0^i(c_0)|}\ll \delta_1.$$
By (\ref{eqn:mu0mu1}), 
$$b_{\mu_0}((z_1, w_1), (0, w_1)\ge \min (b_{\mu_1}((z_1,w_1), (0, c(0))), b_{\mu_1} ((0,w_1), (0, c(0))))\ge s.$$ 
By Lemma~\ref{**}, it follows that 
$$|\xi_j(z_1, w_1)-f_0^{j+1}(w_0)|\le |Df^{j}(z_1,w_1)(v)| W((z_1, w_1), (0, w_1),j)\le R_{j+1}/4.$$
The claim is proved. 
\medskip

By Lemma~\ref{*}, for each $1\le j\le s$, $|Df^j(z_1,w_1)|\ge |Df_0^j(c(0))|/2$. So by (\ref{eqn:df0c0}) and (\ref{eqn:delta1}), we obtain 
\begin{equation}\label{eqn:xij+1}
\xi_{j+1}(D_0)\supset B(f_0^{j+1}(w_0), C_1\theta \lambda_0^i \delta^d).
\end{equation}

{\bf Claim 2.} Provided that $\delta_0>0$ is small enough, there exists a positive integer $n\le s$ such that  $R_{n+1}> 2K\lambda_0^{n}\delta.$

To prove the claim, 
let $(z_1', w_1')$ be such that $|z_1'|\le |\lambda z_0|$ and $w_1'\in \xi_1(D_0)$ with $$b_{\mu_1} ((z_1', w_1'), (0, c(0))=s.$$
By Lemma~\ref{lem:bdcv}, there exists a positive integer $s_0\le n\le s$ such that
$$|Df^n(z'_1,w'_1)(v)|\ge \lambda_0^{n/2} \delta'^{-(d-1)},$$
where
$$\delta'^d=\min (|w'_1-c(0)|, |z'_1|^k).$$
Let $w_0'\in B(w_0,\delta)$ be such that $w'_1=\xi_1(z_0, w_0')$. Then
\begin{equation*}
|w'_1-c(0)|=|w'^d_0+ c(z_0)-c(0)| \le (|w_0|+\delta)^d +2 |z_0|^k\le (|w_0|+\delta)^d+ 2\delta^{d+1}.
\end{equation*}
Since $|z_1|^k\le |z_0|^k<\delta^{d+1},$
there exists a constant $C_2>0$ such that $\delta'\le C_2\max(|w_0|,\delta).$
%
It follows that, by the definition of $\delta_1$,
$$|Df^n(z'_1,w'_1)(v)|\delta_1\ge \lambda_0^{n/2} C_2^{-(d-1)} \theta \delta.$$
By Lemma~\ref{*},
$$|Df^n(z_1',w_1')(v)|\le 2|Df_0^n(c(0))|\le 4 |Df^n(z_1,w_1)(v).$$
Therefore,
$$R_{n+1}=|Df^n(z_1,w_1)(v)|\delta_1\ge \frac{1}{4} |Df^n(z_1', w_1') (v)| \delta_1\ge C_3 \lambda_0^{n/2}\delta,$$
where $C_3$ is a constant. By our choice of $s_0$ (at the beginning of the proof), the implies the claim.

By Claims 1 and 2 and (\ref{eqn:xij+1}), the lemma follows. 
\end{proof}

\medskip
\par Now we can prove Proposition \ref{radius}.
\medskip
\begin{proof}[Proof of Proposition \ref{radius}]  We may assume that $\lambda_0^{d+1} > |\lambda|$. Fix such $\lambda_0\in (|\lambda|,1)$ and let $\kappa_0=\kappa_0(\lambda_0)>0$ be given by Lemma~\ref{lemma4} (2). Choose $K=4/\kappa_0$. Let $\delta_0=\delta_0(\lambda_0, K)>0$ be given by Lemma \ref{lemma5}. Let $\delta>0$ be small.
Suppose that we have found a nonnegative integer $m$ such that
$$\xi_m(D)\supset B(f_0^m(w_0), \lambda_0^m \delta)$$
and $$\xi_{j}(D)\supset B(f_0^j(w_0), C\lambda_0^j\delta^d ), \, 1\le j<m.$$
Note that $m=0$ satisfies these properties.
For $\delta(m)=\lambda_0^m\delta$, we have 
$$|z_m|=|\lambda|^m |z_0|\le |\lambda|^m \delta^{2d} < \delta(m)^2,$$
so that we may apply Lemma~\ref{lemma4} to the $f$-orbit of $(z_m, f_0^m(w_0))$. 
If $|f_0^j(w_0)|\ge \delta_0/2$ for all $j\ge m$, then by Lemma \ref{lemma4}  (1) we obtain $$\xi_j(D)\supset\xi_{j-m}(\{z_m\}\times \xi_m (D))\supset B(f_0^j(w_0), C\lambda_0^j\delta)$$ for all $j$ and hence we are done. Otherwise, let $t\ge m$ be minimal such that $|f_0^t(w_0)|<\delta_0/2$. Then by  Lemma \ref{lemma4}  (2)  we obtain $$\xi_t(D)  \supset B(f_0^t(w_0), \kappa_0 \lambda_0^t \delta/4)$$ together with
$$\xi_j(D)\supset B(f_0^j(w_0), C\lambda_0^j \delta) \mbox{ for } m\le j<t.$$ 
For $\delta(t)=\kappa_0 \lambda_0^t \delta$, we have 
$$|z_t|=|\lambda|^t |z_0|\le |\lambda|^t \delta^{2d} <\delta(t)^{d+1}$$
holds, so applying Lemma~\ref{lemma5} to the $f$-orbit of $(z_t, f_0^t(w_0))$, we obtain a positive integer $m'>t$ such that
$$\xi_{m'}(D) \supset B(f_0^{m'}(w_0), K\kappa_0\lambda_0^{m'} \delta/4)\supset B(f_0^m(w'),\lambda_0^{m'}),$$
and $$\xi_{j}(D)\supset B(f_0^j(w_0),  C\lambda_0^j\delta^d)\mbox{ for } t< j<m'.$$
Repeat the argument for $m'$ instead of $m$ and continue, the conclusion follows. 
\end{proof}

\medskip
\begin{remark}
Proposition \ref{radius} is even new in dimension one. Taking $D\subset L$, we get the following one-dimensional result: let $f_0(w)=w^d+c$ as in (\ref{uni-one}), then for every $0<\lambda_0<1$, there are constants $C=C(\lambda_0)>0$ and $\delta_0=\delta_0(\lambda_0)>0$ such that for every $w$ not be contained in an attracting basins of $f_0$ ,  $\delta<\delta_0$ and $n\geq 0$, we have
\begin{equation*}
B(f_0^n(w),C\lambda_0^n\delta^d)\subset f_0^n(B(w,\delta)).
\end{equation*}
\medskip
\par This improves a result of Denker-Przytycki-Urbanski \cite[Lemma 3.4]{denker1996transfer} for unicritical polynomials. In \cite{denker1996transfer}, it is proved that for rational map $f_0$, there are constants $0<L<1$, $\rho>0$ and $\delta_0>0$ such that  $B(f_0^n(w),L^n\delta^\rho)\subset f_0^n(B(w,\delta))$  for $w$ in the Julia set of $f_0$ and $\delta<\delta_0$.  But the constants $L$ and $\rho$ are not controlled.
\end{remark}
\medskip
\par We are now ready to prove Theorem \ref{main}.
\medskip
%
\par {\em Proof of Theorem \ref{main}}.
First it is proved in \cite{ji2020non} that every Fatou component of $f_0$ can be extended to a two-dimensional Fatou component of $f$. These kind of Fatou components are clearly non-wandering, since their restrictions on  $L$ is non-wandering, by Sullivan's theorem \cite{sullivan1985quasiconformal}. Next we show that every Fatou component of $f$ in a small neighborhood of $L$ is an  extension of a Fatou component of $f_0$.
\medskip
\par Since $f_0$ is unicritical, if $f_0$ has an attracting nor superattracting cycle in $\mathbb{C}, $ then $f_0$ is hyperbolic. In this case it is not hard to show, by the shadowing lemma,  that the Fatou set of $f$ is the union of basins of attracting cycles. See also \cite{peters2018fatou} or \cite{ji2019nonuni}. In the following we assume that $f_0$ has no attracting nor superattracting cycle in $\mathbb{C}$.
\medskip
\par We argue by contradiction.   Assume there is a Fatou component $\Omega$ such that $\Omega$ is not an extension of a one-dimensional Fatou component, i.e. $\Omega$ is wandering. Clearly, the $f$-orbit of $\Omega$ is uniformly bounded. Fix a small constant $\alpha>0$ such that 
\begin{equation}\label{eqn:alpha}
|\lambda|^k< e^{-2d^2\alpha}
\end{equation}
and fix $\lambda_0$ such that $|\lambda|^k e^{2d^2 \alpha} <\lambda_0<1$.
Since $\Omega$ is open, it has positive volume. By Theorem \ref{slowapp}, there exists $x'=(z_0', w_0')\in \Omega$ such that for $n$ large enough, $|\xi_n(x')|\geq e^{-n\alpha}$. 
Then there exists an integer $N>0$ such that the orbit of $x_N':=(z_N', w_N')$ is tame. (See the definition of tame orbits in section 1.)  By Theorem~\ref{thm:derivatives}, for every $n\geq 1$
\begin{equation}\label{5.14}
|Df^n(x_N')(v)|\geq C\lambda_0^n e^{-(d-1)(n+N)\alpha},
\end{equation}
where $C=C(\lambda_0)>0$. 

{\bf Claim.} There exists a constant $\kappa>0$ and an arbitrarily large $n$ such that $$\{z_n'\}\times B(w_n', \kappa \lambda_0^n e^{-(d+1)\alpha n})\subset \Omega_n=:f^n(\Omega).$$

To prove this claim, let $\varphi_m(w)=\xi_m(z_N', w)$ for each $m\ge 0$ and let $\varepsilon_0>0$ be so small that $\{z'_N\}\times B(w_N',\varepsilon_0)\subset \Omega_N$.  We distinguish two cases.

{\bf Case 1.} There exists $\varepsilon\in (0,\varepsilon_0)$ such that $\varphi_m$ is univalent on $B(w_N', \varepsilon e^{-\alpha m})$ for all $m\ge 1$. Then by Koebe $1/4$ Theorem, $\{z_{N+m}'\}\times B(w_{N+m}', R_m)\subset \Omega_{N+m},$ where $$R_m=|Df^m(z_N', w_N')(v)|e^{-\alpha m} \varepsilon/4.$$ Then the conclusion of the claim holds for all $n$ large enough.

{\bf Case 2.} For each $\varepsilon\in (0,\varepsilon_0)$ there exists a minimal $m=m(\varepsilon)$ such that $\varphi_{m+1}$ is not univalent on $B(w_N', \varepsilon e^{-\alpha (m+1)})$. Then $m\to\infty$ as $\varepsilon\to 0$. By the minimality of $m$, $\varphi_m$ is univalent on $B(w_N', \varepsilon e^{-\alpha m})$ and  we have $\left\{w_{m+N}',0\right\}\subset \varphi_m(B(w_N', \varepsilon e^{-\alpha (m+1)})) $, so that its diameter is at least $e^{-\alpha (m+N)}$.
By the Koebe distortion theorem,  it follows that the image $\varphi_m(B(w_N', \varepsilon e^{-\alpha (m+1)}))$ contains a ball centered at $w_{m+N}'$ and of radius at least $\rho(\alpha)e^{-\alpha (m+N)}$, where $\rho(\alpha)>0$ is a constant. Then $n=m+N$  satisfies the requirement of the claim.
The claim is proved.

Take $n_0$ large for which the property in the Claim is satisfied. Rename $z_{n_0}'$ as $z_0$ and choose $w_0\in F(f_0)$ very close to $w_{n_0}'$ such that $w_0\in B(w_{n_0}', \frac{1}{2}\kappa \lambda_0^{n_0} e^{-(d+1)\alpha n_0})$.
(We can choose such $w_0$ since the Fatou set $F(f_0)$ is dense in $\mathbb{C}$.) Let $\delta:= \frac{1}{2}\kappa \lambda_0^{n_0} e^{-(d+1)\alpha n_0}$ and let $D:=\{z_0\}\times B(w_0,\delta)$, which is a vertical disk.  By the Claim,  $D$ is {\bf contained} in the Fatou component $\Omega_{n_0}$.  Moreover by the choice of $\alpha$ in (\ref{eqn:alpha}), the vertical disk $D$ satisfies the condition in Proposition \ref{radius}, i.e. we have $\delta\in (0,\rho)$ and  $|z_0|<\delta^{2d}$, provided that $n_0$ was chosen large enough.

By Proposition~\ref{radius}, it follows that for every $n\geq 1$,
\begin{equation}\label{5.15}
 f_0^n(w_0)\in \xi_n(D)\;\;\text{and}\;\; f^n(D)\subset \Omega_{n_0+n}.
\end{equation}
By the classification of Fatou components in one dimension, $f_0^n(w_0)$ converges either to a parabolic cycle, a Siegel disk or to $\infty$.  In the latter two cases, by (\ref{5.15}), for $n$ large enough $f^n(D)$ must intersect with an extension of a one-dimensional Fatou component, a contradiction. In the parabolic cycle case, it is proved in \cite[Theorem 3.3]{ji2020non} , which is a special case of  Ueda \cite{ueda1986local} section 7.2, that there exist attracting petals  of the form $U_j=\left\{|z|<\varepsilon\right\}\times (U_j\cap \left\{z=0\right\})$, such that  $f^n(x)$ converges to the parabolic cycle $p$ implies $f^N(x)\in U_j$ for some $N$ and $j$. Moreover these attracting petals are contained in the extension of the one-dimensional parabolic basin. Thus by (\ref{5.15}), for $n$ large enough $f^n(D)$ must intersect with an attracting petal, a contradiction. The proof is completed.
\qed
\bigskip
\par {\em Proof of Theorem \ref{main2}}. By Lilov's result \cite{lilov2004fatou}, $f$ does not have wandering Fatou components in the attracting basin of the line at infinity.  Let $\Omega$ be a Fatou component with bounded orbit, we need to show that $\Omega$ is non-wandering. Let $\pi$ be the projection to the $z-$coordinate, then $\pi(\Omega)$ is contained in a bounded Fatou component of $p$.  Since $\pi(\Omega)$ is connected, $\pi(\Omega)$ is contained in a Fatou component  of $p$. Since $p$ is a polynomial without parabolic periodic points and Siegel periodic points, all Fatou components of $p$ are attracting or superattracting basins. The superattracting case is again covered by \cite{lilov2004fatou}. We can assume that $\pi(\Omega)$ is contained in the basin of a attracting periodic point $z_0$ of $p$ with periods $s$. Replacing $f$ by $f^s$, $z_0$ will be a fixed point of $p$. By our assumption that the critical curve of $f^s$ in $\mathbb{C}^2$ has a unique  transversal intersection with the vertical  line $L:=\left\{z=z_0\right\}\subset \mathbb{C}^2$, $f^s$ can be conjugated in a neighborhood $U$ of the vertical line $\left\{z=z_0\right\}$, to the form as in (\ref{uni}). Clearly there exists $N\geq 1$ such that $U\cap f^N(\Omega)\neq \emptyset$.  Apply Theorem \ref{main} we get that $U\cap f^N(\Omega)$ is contained  in a non-wandering Fatou component, This implies that $\Omega$ itself is non-wandering.

\qed

\bigskip
\par {\em Proof of Theorem \ref{main3}}. It is clear that $f$ can be extended to a holomorphic endomorphism on $\mathbb{P}^2$. We need to show that when $|\lambda|<1$, $f$ does not have wandering Fatou components. When $|\lambda|<1$, $p$ has no parabolic periodic points, nor Siegel periodic points. The only attracting periodic point of $p$ is $0$, which is fixed. The critical curve of $f$ in $\mathbb{C}^2$ has two components $\left\{z=-\lambda/2\right\}$ and $\left\{w=0\right\}$. So the critical curve has a unique transversal intersection with the invariant vertical line $\left\{z=0\right\}$. (The intersection point is $(0,0)$). So our map $f$ satisfies all the assumptions in Theorem \ref{main2}, and we conclude the result  by applying Theorem \ref{main2}.

\qed
\bigskip
\section{The multicritical case}
In this section we discuss the situation when $f$ is multicritical (i.e. when $f$ is a regular  polynomial skew product with an invariant attracting line). For a  regular  polynomial skew product with an invariant attracting line,  we may locally conjugate $f$ to the following form
\begin{equation}\label{general}
f(z,w)=(\lambda z, F(z,w)),
\end{equation}
where $|\lambda|<1$ and $F(z,w)=w^d+\sum_{i=0}^{d-1}c_i(z) w^i $ is a polynomial in $w$ with coefficients holomorphic in $z$, $d\geq 2$. Let $f_0$ be the one-dimensional map $f_0(w)=w^d+\sum_{i=0}^{d-1}c_i(0)w^i,$ which is the restriction of $f$ on the invariant line. It is natural to ask whether our Theorem \ref{main} hold in this more general  setting.
\begin{problem}\label{problem}
Let $f(z,w)=(\lambda z, F(z,w))$ as in (\ref{general}). Is that true that every Fatou component of $f$ is an extension of a Fatou component of $f_0$?
\end{problem}

\medskip
\par A positive answer will imply that there is no wandering Fatou component. Notice that most of our techniques have  multicritical version (e.g. the binding argument and the parameter exclusion technique).  However there is a major difference between unicritical and multicritical polynomials: it was proved in \cite{levin2016lyapunov} that for unicritical polynomial $f_0$ the lower Lyapunov exponent of the critical value $c$ is always non-negative, provided $c$ is not contained in an attracting basin of $f_0$. But in the multicritical case, there are semi-hyperbolic polynomials carrying a critical value on the Julia set with $-\infty$ Lyapunov exponent, see Przytycki-Rohde \cite{przytycki1999rigidity}.

\medskip
\par However assuming the following two conditions, we expect our methods can apply  and the answer of Problem \ref{problem} is yes. Let  $$C(f):=\left\{x\in\mathbb{C}^2:Df(x) \;\text{is not invertible}\right\}$$ be the  critical curve,  and let $L:=\left\{z=0\right\}$.
\medskip
\par (1) {\em Lyapunov exponent}: for every critical value $c$ of $f_0$ such that  $c$ is contained in the Julia set of $f_0$, its lower Lyapunov exponent  satisfies
\begin{equation*}
 \chi_-(c):=\liminf_{n\to+\infty} \frac{1}{n}\log |Df_0^n(c)|>\log \lambda.
\end{equation*}
\par (2) {\em Non-degeneracy condition}: the critical curve $C(f)$  intersects $L$ transversally, and the following power series converges to a non-zero constant for every critical value $c$ of $f_0$ which is contained in the Julia set of $f_0$,
\begin{equation}\label{non-deg}
G(c)+\sum_{i=1}^{+\infty}\frac{\lambda^i G(f_0^i(c))}{(f_0^i)'(c)}\neq 0,
\end{equation}
\medskip
\par
where $G$ is a polynomial defined by $G(w):=\sum_{i=0}^{d-1} c_i'(0) w^i$.
\medskip
\par  We give some explanation of the second condition. By Proposition \ref{prop:levin}, for a unicritical polynomial $f_0(w)=w^d+c$ satisfying $f_0$ has no attracting cycle in $\mathbb{C}$, we have
\begin{equation*}
 1+\sum_{i=1}^{+\infty} \frac{\lambda^i} {(f_0^i)'(c)}\neq 0
\end{equation*}
for every $|\lambda|<1$. Thus if $f$ is unicritical of the form (\ref{uni}) such that $c'(0)\neq 0$, the polynomial $G$ in (\ref{non-deg}) is a non-zero constant, and the non-degeneracy condition (\ref{non-deg}) is automatically true. Similar non-degeneracy condition raises in other context, see for instance Tsujii \cite{tsujii1993positive} and Gao-Shen \cite{gao2014summability}.
\medskip

\end{document}